\newcommand\xLeftRightarrow[2][]{\ensurestackMath{\mathrel{%
  \stackengine{1pt}{%
    \stackengine{0pt}{\Longleftrightarrow}{\scriptstyle#2}{O}{c}{F}{F}{S}%
  }{\scriptstyle#1}{U}{c}{F}{F}{S}%
}}}
\theoremstyle{thmstyleone}%
\newtheorem{theorem}{Theorem}
\newtheorem{proposition}[theorem]{Proposition}%
\newtheorem{corollary}[theorem]{Corollary}%
\newtheorem{lemma}[theorem]{Lemma}
\theoremstyle{thmstyletwo}%
\newtheorem{remark}{Remark}%
\theoremstyle{thmstylethree}%
\newtheorem{definition}{Definition}%
\begin{document}

\title[Compactification of Perception Pairs and Spaces of GENEOs]{Compactification of Perception Pairs and Spaces of Group Equivariant non-Expansive Operators}


\author*[]{\fnm{Faraz} \sur{Ahmad}}\email{faraz.ahmad2@unibo.it}

\affil[]{\orgdiv{Dipartimento di Matematica}, \orgname{Universit\`a di Bologna}, \orgaddress{\country{Italia}}}



\abstract{We define the notions of a compact perception pair, compactification of a perception pair, and compactification of a space of group equivariant non-expansive operators. We prove that every perception pair with totally bounded space of measurements, which is also rich enough to endow the common domain with a metric structure, can be isometrically embedded in a compact perception pair. Likewise, we prove that if the images of group equivariant non-expansive operators in a given space form a cover for their common codomain, then the space of such operators can be isometrically embedded in a compact space of group equivariant non-expansive operators, such that the new reference perception pairs are compactifications of the original ones having totally bounded data sets. Meanwhile, we state some compatibility conditions for these embeddings and show that they too are satisfied by our constructions.}

\keywords{Topological Data Analysis, Equivariance, Compactification, Machine Learning, Geometric Deep Learning}

\pacs[MSC Classification]{Primary: 55N31; Secondary: 46E10, 68T09.}

\maketitle


\section{Introduction}\label{Intro}

The importance of equivariance in machine learning is widely recognized. The use of equivariant operators allows one to incorporate domain knowledge into the learning process and introduce symmetries in data space, thereby paving the way not only to speeding up machine learning and reducing large dimensionality of data but also to the introduction of new abstract representations \cite{Invar, EqCNN, HarNet, ReprLearn, RoBeToHo20}.

From the epistemological perspective, equivariant operators can be interpreted as observers that transform data into (usually simpler and more interpretable) data.
In our mathematical framework, we are interested in data observers that are represented by functional operators transforming data in a regular and stable way, while respecting the compatibility with the action of an underlying group $G$ of transformations, which describes the equivalence between data \cite{MaIntPa, PosPa}. The essence of group equivariant operators lies in their commutativity with respect to the action of $G$, and one of the most important regularity, \textit{viz.} non-expansivity, enables one to avoid instability and divergent behavior. 

Our research focuses on the study of topological properties of these \textit{group equivariant non-expansive operators} (GENEOs, for short). Such operators can be seen as components of a new kind of neural networks as well as selected observers whose expertise is leveraged to improve data analysis.
The use of GENEOs opens new possibilities in applications. For example, a shallow and interpretable neural network based on GENEOs, \textit{viz.} GENEOnet, has been recently proposed for the efficient detection of protein pockets that can host ligands \cite{GENEOProteins}.

In some sense, GENEOs constitute a bridge between geometric deep learning \cite{BrBrLeSzVa17,Br22} and topological data analysis. They make available a mathematical model for the concepts of agent and observer, seen from a geometrical perspective. Moreover, they present interesting links with persistent homology and allow one to get lower bounds for the natural pseudo-distance associated with the action of a group of homeomorphisms \cite{MaIntPa}. Furthermore, the concept of GENEO is useful in the architectural analysis of neural networks. Therefore, it is natural to study the metric and topological properties of the spaces of GENEOs. This study, coupled with our compactification results, could prove useful for the research in artificial intelligence.

Formally speaking, GENEOs are maps between so-called \emph{perception pairs} $(\Phi,G)$, where $\Phi$ is a set of bounded real-valued maps defined on a non-empty set $X$ and $G$ is a group of $\Phi$-preserving bijections of $X$. The space $\Phi$ represents the signals or measurements that the observer can interpret, while $G$ is the equivariance group associated with the action of the observer.
The space $\Phi$ is naturally endowed with a metric structure and endows $X$ and $G$ with suitable pseudo-metrics or metrics. 
This reflects the epistemological assumption that any information (and hence any quantitative structure) follows from physical measurements. It is interesting to observe that some of the pseudo-metric and topological properties of $\Phi$ are propagated to $X$ and $G$, but not all. For example, if $\Phi$ is totally bounded, then so are $X$ and $G$ \cite[Theorem 1, Theorem 4]{NQTh}, while there are simple examples of perception pairs with compact $\Phi$ but incomplete $X$ and $G$ \cite{MaIntPa}. 

Compactness results provide us with fundamental guarantees in machine learning. It is known that the space of all group equivariant non-expansive operators associated with a given group homomorphism is compact whenever the spaces of signals are compact \cite[Theorem 7]{MaIntPa}. In some sense, this result states that if the spaces of data are compact, then the space of observers is compact too, provided that suitable topologies are used. This ensures that, for any specified tolerance, there is always a finite set of GENEOs in the space that can approximate the behavior of each GENEO within any acceptable proximity. 

Therefore, it is natural to seek embeddings of important mathematical structures into compact ones. This process is called \textit{compactification} in general topology. Formally, a compact Hausdorff space $K$ is a compactification of a given space $A$ if it contains a dense subspace $D$ homeomorphic to $A$. In the case of metric spaces, we require the underlying homeomorphism $e: A \to D \subseteq K$ to be an isometry.

In view of the widely recognized importance of compactifications, we seek conditions under which a  given space $\mathcal{F} \subseteq \mathcal{F}^{\mathrm{all}}_T$ of GENEOs $(F, T) : (\Phi, G) \to (\Psi, H)$, and the respective perception pairs $(\Phi,G)$, $\mathrm{dom}(\Phi) = X$ and $(\Psi,H)$, $\mathrm{dom}(\Psi) = Y$, can be embedded isometrically into compact ones, where $\mathcal{F}^{\mathrm{all}}_T$ denotes the topological space of all GENEOs between the perception pairs $(\Phi,G)$, $(\Psi,H)$, with respect to the homomorphism $T:G\to H$. In this article, we ascertain which spaces of GENEOs allow us to construct the surrounding compact spaces of GENEOs isometrically containing the original ones. We prove that, in many practical applications, every perception pair and an important class of spaces of GENEOs can be viewed as parts of compact perception pairs and compact spaces of GENEOs.

We will be assuming that our data sets $\Phi$ and $\Psi$ are totally bounded and are rich enough to endow $X$ and $Y$, and therefore $G$ and $H$ respectively, with a metric structure. Moreover, we will also assume that the collection $\{F(\Phi) \mid F \in \mathcal{F} \subseteq \mathcal{F}^{\mathrm{all}}_T\}$ covers the data set $\Psi$.

Our approach, in brief, is as follows. The total boundedness of $\Phi$ ensures that $X$ is totally bounded (Theorem \ref{PhiBddXBdd}), and therefore, its metric completion $\hat{X}$ is compact. 
We extend the functions $\varphi \in \Phi$ to functions $\hat{\varphi} : \hat{X} \to \mathbb{R}$ on the metric completion $\hat{X}$ (Subsection \ref{ExSg}), and use the isometries $g \in G$ to define the isometries $\hat{g} : \hat{X} \to \hat{X}$ (Subsubsection \ref{CpG}). The set $\hat{\Phi}$, being isometric to the totally bounded space $\Phi$ is likewise totally bounded (Corollary \ref{CpPhiTBdd}), while the set $\hat{G}$ of all $\hat{g}$ may or may not be compact despite $\Phi$ being totally bounded \cite{MaIntPa}. We, therefore, consider their closures $\overline{\hat{\Phi}}$ and $\overline{\hat{G}}$ in the complete space $C(\hat{X}, \mathbb{R})$ and the compact space $\mathrm{Iso}(\hat{X})$ of isometries of $\hat{X}$ respectively, and, constructing successively the perception pairs $(\hat{\Phi}, \hat{G}), (\overline{\hat{\Phi}}, \hat{G})$, and $(\overline{\hat{\Phi}}, \overline{\hat{G}})$, we obtain the compatible embedding of the original perception pair $(\Phi, G)$ into the compact perception pair $(\overline{\hat{\Phi}},\overline{\hat{G}})$ (Subsubsection \ref{subSectEPPT}). 
If $\mathcal{F}$ is a space of GENEOs $(F, T) : (\Phi, G) \to (\Psi, H)$, then these perception pairs allow us to define two suitable spaces $\mathcal{F}_1 \subseteq \mathcal{F}^{\mathrm{all},1}_{\hat{T}}$ and $\mathcal{F}_2 \subseteq \mathcal{F}^{\mathrm{all},2}_{\hat{T}}$ of GENEOs $(\hat{
F}, \hat{T}) : (\hat{\Phi}, \hat{G}) \to (\hat{\Psi}, \hat{H})$ and $(\overline{\hat{
F}}, \hat{T}) : (\overline{\hat{\Phi}}, \hat{G}) \to (\overline{\hat{\Psi}}, \hat{H})$ respectively  (Subsubsection \ref{CpF} and Section \ref{CompF}). Under the covering assumption stated above, we can define a suitable space $\mathcal{F}_3 \subseteq \mathcal{F}^{\mathrm{all}}_{\overline{\hat{T}}}$ of GENEOs $(\overline{\hat{
F}}, \overline{\hat{T}}) : (\overline{\hat{\Phi}}, \overline{\hat{G}}) \to (\overline{\hat{\Psi}}, \overline{\hat{H}})$, while the closure of $\mathcal{F}_3 = \{ \overline{\hat{F}} : \overline{\hat{\Phi}} \to \overline{\hat{\Psi}} \mid F \in \mathcal{F} \}$ in the compact space $\mathcal{F}^{\mathrm{all}}_{\overline{\hat{T}}}$ serves as the requisite compactification of the space $\mathcal{F} \subseteq \mathcal{F}^{\mathrm{all}}_T$ (Section \ref{CompF}).

While the literature concerning equivariant neural networks is already extensive, the topological research about them is still quite limited. Until now, most of the attention has been devoted to what is called \emph{topological machine learning}; i.e., the joint use of topology-based methods and machine learning algorithms \cite{HeMoRi21}, in general terms. In this field, some research focuses on the study of so-called \emph{intrinsic topological features}, which concerns the employment of topological features to analyze or influence the machine learning model. 
In particular, some regularisation techniques have been considered, such as \emph{topological autoencoders} \cite{HoKwNiDi19, MoHoRiBo20} (based on the idea of building networks that can simplify the data without changing their topology) or methods to simplify the topological complexity of the decision boundary \cite{ChNiBaWa19}. More fundamental principles of regularisation using topological features have been investigated in \cite{HoGrNiKw20}. The inclusion of topological features of graph neighborhoods into a standard graph neural network (GNN) has been proposed in \cite{ZhYeChWa20}, and the employment of GNNs to learn suitable filtrations have been examined in \cite{HoGrRiNiKw20}. Furthermore, topological techniques have also been used for model analysis in machine learning. For example, topological analysis has been applied to evaluate generative adversarial networks (GANs) by the concept of \emph{Geometry Score} \cite{KhOs18}, while \emph{neural persistence} has been introduced as a complexity measure summarizing topological features that arise when filtrations of the neural network graphs are calculated \cite{RiToBoMoHoGu19}. The topological analysis of the decision boundary of a given classifier has been considered in \cite{RaVaMo19}, and the topological information encoded in the weights of convolutional neural networks (CNNs) has been studied in \cite{GaCa19}.

However, we stress that the development of the theory of GENEOs differs greatly from these lines of research, which are not focused on equivariance concerning arbitrary transformation groups and do not study the topology of suitable operator spaces, but most of them consider the properties of single techniques and applications. In other words, the approach we are interested in is devoted to studying the topological properties of a space of equivariant operators as a whole. In this mathematical setting, the compactification problem can arise and admit resolution.

This paper is structured as follows. In Section \ref{NT}, we introduce basic concepts and give formal definitions. Section \ref{BR} is devoted to summarizing important results on topological groups and GENEOs that will be used in our constructions. Our compactification results are proved in Section \ref{CR}. A brief discussion and an appendix containing some supplementary material conclude the paper.


\section{The Mathematical Setting}
\label{NT}

Let $X$ be a non-empty set and consider the normed vector space $(\mathbb{R}^X_b, \| \cdot \|_\infty)$, where \[\mathbb{R}^X_b=\{\varphi:X\rightarrow\mathbb{R} \mid \varphi \text{ is bounded}\},\] and $\| \cdot \|_\infty$ denotes the usual uniform norm. 
Any metric subspace $(\Phi, D_\Phi)$ of $\mathbb{R}^X_b$, where \[D_\Phi(\varphi_1, \varphi_2) := \| \varphi_1 - \varphi_2 \|_\infty = \sup_{x \in X} \left \vert \varphi_1(x) - \varphi_2(x) \right \vert, \ \mathrm{for\ every\ } \varphi_1, \varphi_2 \in \Phi \] endows $X$ with the topology induced by the extended pseudo-metric
\[ D_X(x_1, x_2) := \sup\limits_{\varphi\in\Phi}\left \vert \varphi(x_1)-\varphi(x_2)\right \vert.\] The space $X$ is interpreted as the space where one makes measurements, and the elements $\varphi$ of $\Phi$ are called \textit{admissible measurements} or \textit{signals}. The function spaces $\Phi$ are sometimes called \textit{data sets}. Moreover, we set $\mathrm{dom}(\Phi) := X$.

In our model, self-maps of the space $X$ have an important role to play.

\begin{definition}
A map $g : X \to X$ is said to be a $\Phi-$\textit{operation} if the composite function $\varphi g$ is an element of $\Phi$ for every $ \varphi \in \Phi$. A bijective $\Phi-$\textit{operation} is called an \textit{invertible} $\Phi-$\textit{operation} if $g^{-1}$ is also a $\Phi-$operation.
\end{definition}

The set of all invertible $\Phi-$operations is denoted by $\mathrm{Aut}_{\Phi}(X)$; i.e., \[ \mathrm{Aut}_{\Phi}(X) := \{ g : X \to X \mid g \ \mathrm{is \ a \ bijection, \ and} \ \varphi g, \ \varphi g^{-1} \in \Phi, \ \mathrm{for \ all} \ \varphi \in \Phi \}, \] and forms a group under the function composition. It acts on the space $\Phi$ through the right action 
\[ \rho : \Phi \times \mathrm{Aut}_{\Phi}(X) \to \Phi, \ (\varphi, g) \mapsto \varphi g. \]

We say that a bijection $f : X \to X$ is an \textit{isometry} of $X$ if $D_X(f(x), f(y)) = D_X(x,y)$, for every $x, y \in X$, and denote the set of all isometries of $X$ by $\mathrm{Iso}(X)$. 

Let $C(X,X) \supseteq \mathrm{Iso}(X)$ denote the set of all continuous functions $f : X \to X$. The following pseudo-metric will be used frequently in the sequel.
 \[ d_\infty(f,g) := \sup_{x \in X} D_X(f(x), g(x)), \ \mathrm{for\ every\ } f, g \in C(X,X). \]  
 If $\Phi$ is rich enough to endow $X$ with a metric structure, instead of a pseudo-metric one, then $d_\infty$ is an extended metric, and is called the \textit{metric of uniform convergence} on $C(X,X)$.

\begin{definition}
If $G$ is a subgroup of $\mathrm{Aut}_{\Phi}(X)$, then $(\Phi, G)$ is called a \textit{perception pair}.
\end{definition}

\begin{definition}
We say that a perception pair $(\Phi, G)$ with $\mathrm{dom}(\Phi) = X$ is \textit{compact} if $\Phi$, $G$, and $X$ are all compact.
\end{definition}

The data set $\Phi$ endows $\mathrm{Aut}_{\Phi}(X)$ with a pseudo-metric structure where the (extended) pseudo-distance $D_{\mathrm{Aut}}$ is given by \[D_{\mathrm{Aut}}(f, g) := \sup_{\varphi \in \Phi} D_{\Phi}(\varphi f, \varphi g), \ \mathrm{for\ every\ } f, g \in \mathrm{Aut}_{\Phi}(X). \]

Conversely, each group $G \subseteq \mathrm{Aut}_{\Phi}(X)$ induces on the space $\Phi$ a pseudo-metric $d_G : \Phi \times \Phi \to \mathbb{R}$: \[ d_G(\varphi_1,\varphi_2) := \inf_{g \in G} D_{\Phi}(\varphi_1, \varphi_2 g), \ \mathrm{for\ every\ } \varphi_1,\varphi_2 \in \Phi. \] 
We call $d_G$ the \textit{natural pseudo-distance} associated with the group $G$. This pseudo-metric represents the ground truth in our model and allows  us to compare functions in the sense that it vanishes for the pairs of functions that are equivalent with respect to the action of the group $G$ representing the data similarities useful for the observer \cite{NatPDistCur, NatPDistSur, NatPDistMan}.

It is known that each invertible $\Phi-$operation is an isometry with respect to $D_X$; that is, $\mathrm{Aut}_{\Phi}(X) \subseteq \mathrm{Iso}(X)$ \cite[Proposition 2]{MaIntPa}. But $d_{\infty}$ does not endow the space $(\mathrm{Aut}_{\Phi}(X), D_{\mathrm{Aut}})$ with any additional pseudo-metric structure:
\begin{align*}
    D_{\mathrm{Aut}}(f, g) & := \sup_{\varphi \in \Phi} D_{\Phi}(\varphi f, \varphi g) \\
    & = \sup_{\varphi \in \Phi} \sup_{x \in X} \vert \varphi f(x) - \varphi g(x) \vert \\
    & = \sup_{x \in X} D_X (f(x), g(x)) \\
    & =: d_\infty (f, g),
\end{align*}
for all $f, g \in \mathrm{Aut}_{\Phi}(X).$ So, $d_\infty$ coincides with the pseudo-distance $D_{\mathrm{Aut}}$ on $\mathrm{Aut}_{\Phi}(X)$; that is \[ d_\infty \vert_{\mathrm{Aut}_{\Phi}(X)} = D_{\mathrm{Aut}}. \]

In general, $D_{\mathrm{Aut}}$ is an extended pseudo-metric. But when $(X, D_X)$ is a metric space, then so is $(G, D_{\mathrm{Aut}})$: If $g, h \in G$ are distinct functions, then there is an $x_0 \in X$ such that $g(x_0) \neq h(x_0)$. Since $D_X$ is a metric, \[ 0 < D_X(g(x_0), h(x_0)) \leq \sup_{x \in X} D_X(g(x), h(x)) = d_\infty(g,h) = D_\mathrm{Aut}(g,h), \] whence $D_\mathrm{Aut}$ is a metric as well.

\begin{definition}
Let $(\Phi, G)$ and $(\Psi, H)$ be perception pairs with $\mathrm{dom}(\Phi) = X$ and $\mathrm{dom}(\Psi) = Y$, and $T : G \to H$ be a group homomorphism.  A map $F : \Phi \to \Psi$ is said to be a \textit{group equivariant non-expansive operator} (GENEO) \textit{with respect to $T$} if \[ F(\varphi \circ g) = F(\varphi) \circ T(g), \  \mathrm{for\ every\ } \varphi \in \Phi, g \in G, \] and \[ \| F(\varphi_1) - F(\varphi_2) \|_\infty \leq \|\varphi_1 - \varphi_2\|_\infty, \ \mathrm{for\ every\ } \varphi_1, \varphi_2 \in \Phi. \]
\end{definition}

A map $F : \Phi \to \Psi$ satisfying the first condition is called $T-$\textit{equivariant} or a \textit{group equivariant operator} (GEO), and it is called \textit{non-expansive} if it satisfies the second condition.
For the sake of conciseness, we often write a GENEO as $(F, T) : (\Phi, G) \to (\Psi, H)$.

The set $\mathcal{F}_T^{\mathrm{all}}$ of all GENEOs $(F, T) : (\Phi, G) \to (\Psi, H)$ corresponding to a group homomorphism $T : G \to H$ is a metric space with the distance function given by \[ D_{\mathrm{GENEO}}(F_1, F_2) = \sup_{\varphi \in \Phi} D_{\Psi}(F_1(\varphi), F_2(\varphi)), \ \mathrm{for\ every\ } F_1, F_2 \in \mathcal{F}_T^{\mathrm{all}}. \]

The natural pseudo-distance allows us to define another pseudo-metric on this space: \[ D_{\mathrm{GENEO}, H}(F_1, F_2) := \sup_{\varphi \in \Phi} d_{H} (F_1(\varphi), F_2(\varphi)), \ \mathrm{for\ every\ } F_1, F_2 \in \mathcal{F}_T^{\mathrm{all}}. \]

The spaces  $\mathcal{F} \subseteq \mathcal{F}_T^{\mathrm{all}}$ of GENEOs prove instrumental in comparing data. For example, one can consider the following pseudo-metric: \[ D_{\mathcal{F}, \Phi}(\varphi_1,\varphi_2):= \sup_{F \in \mathcal{F}} D_{\Psi}(F(\varphi_1),F(\varphi_2)), \ \mathrm{for\ every\ } \varphi_1, \varphi_2 \in \Phi. \]

Conti et al. (2022) \cite{ConstGENEOSym} give examples demonstrating how the use of GENEOs increases our ability to distinguish between data.

Our objective is to obtain isometric embeddings of perception pairs and of the spaces of GENEOs into compact ones while retaining the metric properties of the original spaces.
The reader is referred to \cite{MaIntPa, NQTh} for further details about the concepts we have so far introduced in this section.


We will assume in Section \ref{CR} that the data set $\Phi$ is rich enough to endow the common domain $X$ with a metric structure. The first step towards constructing our compactifications, under this assumption, is to consider the metric completion of $X$. It is well known that every metric space $(M, D_M)$ admits a unique metric completion $(\hat{M}, \hat{D}_{\hat{M}})$ up to homeomorphisms. We can assume that the completion $\hat{M}$ contains $M$; i.e., we have the inclusion \[ j : M \to \hat{M}, \] and the metric $\hat{D}_{\hat{M}}$ is given by
\[\hat{D}_{\hat{M}}(\hat{x},\hat{y})= \lim_{n \to \infty} D_M(x_n,y_n),\]
where $\hat{x},\hat{y} \in \hat{M}$, and $(x_n)_{n\in \mathbb{N}}$, $(y_n)_{n\in \mathbb{N}}$ are arbitrary sequences in $M$ converging to $\hat{x}$ and $\hat{y}$ respectively.


\section{Basic results on topological groups and group equivariant non-expansive operators}
\label{BR}

We recall the following results {\color{black}from \cite{MaIntPa, Munk, NQTh}} which will be used frequently in the sequel. The proofs of the results that appear only in \cite{NQTh} will be given in Section \ref{App} for the sake of completeness.

\begin{proposition}\label{SigNonExp}
\cite[Proposition 1.2.10]{NQTh} Each function $\varphi \in \Phi$ is non-expansive, and hence uniformly continuous with respect to $D_X$.
\end{proposition}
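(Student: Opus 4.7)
The plan is to read the conclusion directly off the definition of $D_X$. Fix any $\varphi \in \Phi$ and any two points $x_1, x_2 \in X$. By construction
\[
D_X(x_1, x_2) = \sup_{\psi \in \Phi} |\psi(x_1) - \psi(x_2)|,
\]
and the quantity $|\varphi(x_1) - \varphi(x_2)|$ is one of the terms appearing in this supremum. Hence
\[
|\varphi(x_1) - \varphi(x_2)| \le D_X(x_1, x_2),
\]
which is precisely the statement that $\varphi \colon (X, D_X) \to (\mathbb{R}, |\cdot|)$ is $1$-Lipschitz, i.e.\ non-expansive.

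For the second half, I would simply observe that any non-expansive map between (pseudo-)metric spaces is uniformly continuous: given $\varepsilon > 0$, the choice $\delta = \varepsilon$ works, since $D_X(x_1, x_2) < \delta$ immediately forces $|\varphi(x_1) - \varphi(x_2)| < \varepsilon$ by the inequality above.

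There is essentially no obstacle here; the whole content of the proposition is the tautological remark that a single term of a supremum is dominated by the supremum. The only point worth flagging, for cleanliness, is that $D_X$ is in general merely an extended pseudo-metric, but the inequality and the $\delta = \varepsilon$ argument are insensitive to this, so no adjustment is needed.
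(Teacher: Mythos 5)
Your argument is correct and is essentially identical to the paper's own proof: the paper likewise bounds $|\varphi(x_1)-\varphi(x_2)|$ by the supremum defining $D_X(x_1,x_2)$ to get non-expansivity, leaving the standard $\delta=\varepsilon$ deduction of uniform continuity implicit. Nothing further is needed.
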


Therefore, the topology $\tau_{D_X}$ induced by $D_X$ is finer than the initial topology $\tau_{\text{in}}$ on $X$, which is the coarsest topology on $X$ with respect to which all the signals $\varphi \in \Phi$ are continuous.

\begin{theorem}\label{InitTop}
\cite[Supplementary Methods: Theorem 2.1]{MaIntPa} If $\Phi$ is totally bounded, then $\tau_{D_X}$ coincides with $\tau_{\text{in}}$.
\end{theorem}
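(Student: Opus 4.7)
The plan is to prove the two inclusions $\tau_{\text{in}} \subseteq \tau_{D_X}$ and $\tau_{D_X} \subseteq \tau_{\text{in}}$ separately. The first inclusion is immediate and does not use total boundedness: by Proposition \ref{SigNonExp}, every $\varphi \in \Phi$ is non-expansive, hence continuous, with respect to $D_X$, so $\tau_{D_X}$ is one of the topologies making all signals continuous; since $\tau_{\text{in}}$ is the coarsest such topology, $\tau_{\text{in}} \subseteq \tau_{D_X}$.

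The substantive direction is $\tau_{D_X} \subseteq \tau_{\text{in}}$, and this is where total boundedness enters. I would show that every $D_X$-ball $B_{D_X}(x_0, r)$ is a $\tau_{\text{in}}$-neighbourhood of $x_0$. Fix $x_0 \in X$ and $r > 0$, set $\varepsilon = r/4$, and use total boundedness of $\Phi$ to extract a finite $\varepsilon$-net $\{\varphi_1,\ldots,\varphi_n\} \subseteq \Phi$ with respect to $\|\cdot\|_\infty$. Then define
\[
U := \bigcap_{i=1}^{n} \varphi_i^{-1}\bigl( (\varphi_i(x_0) - \varepsilon,\ \varphi_i(x_0) + \varepsilon)\bigr),
\]
which is $\tau_{\text{in}}$-open as a finite intersection of preimages of open sets under the signals $\varphi_i$, and contains $x_0$.

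The core of the argument is then a standard three-term triangle inequality: for any $y \in U$ and any $\varphi \in \Phi$, pick an index $i$ with $\|\varphi - \varphi_i\|_\infty < \varepsilon$ and estimate $|\varphi(y) - \varphi(x_0)|$ by inserting $\varphi_i(y)$ and $\varphi_i(x_0)$, obtaining a bound of $3\varepsilon$. Taking the supremum over $\varphi \in \Phi$ gives $D_X(x_0, y) \leq 3\varepsilon < r$, so $U \subseteq B_{D_X}(x_0, r)$, which completes the inclusion.

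I do not expect a serious obstacle here; the only subtlety is making sure the $\varepsilon$-net argument handles the supremum in the definition of $D_X$ uniformly in $\varphi$, but this is exactly what total boundedness is designed to do. Without the total boundedness hypothesis one cannot replace an arbitrary $\varphi \in \Phi$ by a fixed $\varphi_i$ up to a uniform error, and the inclusion $\tau_{D_X} \subseteq \tau_{\text{in}}$ can genuinely fail, so the hypothesis is sharply used.
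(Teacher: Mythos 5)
Your proof is correct. Note that the paper does not actually supply its own proof of Theorem \ref{InitTop} --- it is quoted from \cite{MaIntPa} and is not among the results re-proved in the appendix --- but your argument is the standard one: the easy inclusion $\tau_{\text{in}} \subseteq \tau_{D_X}$ follows from Proposition \ref{SigNonExp} exactly as you say, and your finite $\varepsilon$-net plus three-term triangle inequality for the reverse inclusion is precisely the technique the paper itself deploys in the appendix proofs of Theorems \ref{PhiBddXBdd} and \ref{PhiBddGBdd}, so your use of total boundedness is both correct and in the spirit of the source.
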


\begin{theorem}\label{PhiBddXBdd}
\cite[Theorem 1]{NQTh} If $\Phi$ is totally bounded, then so is $(X,D_X)$.
\end{theorem}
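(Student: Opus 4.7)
The plan is to pass from a finite $\varepsilon$-net in $\Phi$ to a finite $\varepsilon$-net in $X$ by means of the evaluation map into a finite-dimensional space, where total boundedness is automatic for bounded sets.

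Fix $\varepsilon > 0$. First I would exploit total boundedness of $\Phi$ to pick a finite $\tfrac{\varepsilon}{3}$-net $\{\varphi_1,\dots,\varphi_n\} \subseteq \Phi$ (with respect to $\|\cdot\|_\infty = D_\Phi$). Since total boundedness implies boundedness, there is $M > 0$ with $\|\varphi_i\|_\infty \le M$ for every $i$, and in particular the evaluation map
\[
E : X \to \mathbb{R}^n, \qquad E(x) := \bigl(\varphi_1(x),\dots,\varphi_n(x)\bigr),
\]
takes values in the cube $[-M,M]^n$.

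Equipping $\mathbb{R}^n$ with the $\ell^\infty$-norm, the set $E(X)$ is bounded and hence totally bounded. So I can select finitely many points $x_1,\dots,x_m \in X$ such that the images $E(x_1),\dots,E(x_m)$ form an $\tfrac{\varepsilon}{3}$-net for $E(X)$; that is, for every $x \in X$ there exists $j \in \{1,\dots,m\}$ with
\[
\max_{1 \le i \le n}\bigl|\varphi_i(x) - \varphi_i(x_j)\bigr| < \tfrac{\varepsilon}{3}.
\]

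The core step is a standard $\tfrac{\varepsilon}{3}$-estimate: for an arbitrary $\varphi \in \Phi$, pick $\varphi_i$ with $\|\varphi - \varphi_i\|_\infty < \tfrac{\varepsilon}{3}$; then for $x$ and the corresponding $x_j$ as above,
\[
|\varphi(x) - \varphi(x_j)| \le |\varphi(x) - \varphi_i(x)| + |\varphi_i(x) - \varphi_i(x_j)| + |\varphi_i(x_j) - \varphi(x_j)| < \varepsilon.
\]
Taking the supremum over $\varphi \in \Phi$ yields $D_X(x, x_j) \le \varepsilon$, so $\{x_1,\dots,x_m\}$ is a finite $2\varepsilon$-net (or $\varepsilon$-net up to relabelling $\varepsilon$) in $(X, D_X)$. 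Since $\varepsilon$ was arbitrary, $(X, D_X)$ is totally bounded.

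I do not expect any serious obstacle: the only subtle points are (i) ensuring that $D_X$ is a genuine (non-extended) pseudo-metric, which follows from the uniform bound $M$ on $\Phi$, and (ii) choosing the $\tfrac{\varepsilon}{3}$-net for $E(X)$ with centres actually lying in $E(X)$ so that they can be lifted to honest points of $X$ (this is a standard observation, since any totally bounded set admits internal $\varepsilon$-nets).
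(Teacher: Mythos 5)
Your proof is correct. The one point you flag yourself --- that the $\tfrac{\varepsilon}{3}$-net for $E(X)$ must have centres in $E(X)$ so they lift to actual points $x_j \in X$ --- is indeed the only delicate spot, and your resolution (internal nets always exist for totally bounded sets) is standard and valid; the uniform bound $M$ on the finitely many $\varphi_i$ is automatic since each element of $\Phi \subseteq \mathbb{R}^X_b$ is bounded.

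Your route differs from the paper's in how total boundedness of $X$ is certified, though the combinatorial core is the same. The paper proves the sequential criterion: every sequence $(x_i)$ in $X$ admits a Cauchy subsequence, extracted by iterated applications of Bolzano--Weierstrass to the real sequences $(\varphi_k(x_i))_i$ for the finitely many $\varphi_k$ in an $\varepsilon$-net of $\Phi$, followed by the same $3\varepsilon$ triangle estimate you use; it then invokes the equivalence between this sequential property and total boundedness (cited from Gaal). You instead work directly with the definition via $\varepsilon$-nets, packaging the finite-dimensional reduction as the evaluation map $E : X \to [-M,M]^n$ and using that bounded subsets of $(\mathbb{R}^n, \ell^\infty)$ are totally bounded. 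Your version is more constructive --- it exhibits an explicit finite net in $(X, D_X)$ rather than extracting subsequences --- and avoids appealing to the sequential characterization; the paper's version generalizes slightly more painlessly to settings where one prefers to reason sequence by sequence. Both hinge on the same two ingredients: the finite net in $\Phi$ controls $D_X$ up to $\varepsilon$ via the triangle inequality, and compactness phenomena in $\mathbb{R}^n$ do the rest.
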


\begin{proposition}\label{PhiOpIso}
\cite[Proposition 2]{MaIntPa} $\mathrm{Aut}_{\Phi}(X) \subseteq \mathrm{Iso}(X).$
\end{proposition}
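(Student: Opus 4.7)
The plan is to prove this by direct unfolding of the definitions, exploiting the fact that an invertible $\Phi$-operation induces a bijection of $\Phi$ via right composition. Fix $g \in \mathrm{Aut}_{\Phi}(X)$. Since $g$ is a bijection of $X$ by definition, it suffices to verify that $g$ preserves $D_X$; that is, $D_X(g(x), g(y)) = D_X(x,y)$ for every $x, y \in X$.

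First I would write, using the definition of $D_X$ and the associativity of function composition,
\[
D_X(g(x), g(y)) = \sup_{\varphi \in \Phi} \bigl\vert \varphi(g(x)) - \varphi(g(y)) \bigr\vert = \sup_{\varphi \in \Phi} \bigl\vert (\varphi g)(x) - (\varphi g)(y) \bigr\vert .
\]
The key step is then to observe that the map $R_g : \Phi \to \Phi$, $\varphi \mapsto \varphi g$, is well defined (because $g$ is a $\Phi$-operation) and is in fact a bijection of $\Phi$ onto itself. Indeed, $R_{g^{-1}} : \Phi \to \Phi$, $\psi \mapsto \psi g^{-1}$, is well defined because $g^{-1}$ is also a $\Phi$-operation, and $R_{g^{-1}}$ is the two-sided inverse of $R_g$ since $(\varphi g) g^{-1} = \varphi$ and $(\psi g^{-1}) g = \psi$.

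Consequently, replacing the index $\varphi$ by $\psi := \varphi g$ produces the same supremum over $\Phi$, so
\[
\sup_{\varphi \in \Phi} \bigl\vert (\varphi g)(x) - (\varphi g)(y) \bigr\vert = \sup_{\psi \in \Phi} \bigl\vert \psi(x) - \psi(y) \bigr\vert = D_X(x,y),
\]
which yields $D_X(g(x), g(y)) = D_X(x,y)$ and thus $g \in \mathrm{Iso}(X)$. There is essentially no serious obstacle here; the only point requiring care is the bijectivity of $R_g$, which is exactly where the assumption $g^{-1} \in \mathrm{Aut}_{\Phi}(X)$ (as opposed to $g$ being a mere $\Phi$-operation) is used. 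Were $g$ only a $\Phi$-operation, we would only obtain $D_X(g(x), g(y)) \leq D_X(x, y)$, so the invertibility hypothesis is essential to upgrade non-expansivity to isometry.
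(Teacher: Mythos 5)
Your proof is correct: reindexing the supremum defining $D_X$ via the map $R_g:\varphi\mapsto\varphi g$, whose surjectivity onto $\Phi$ is exactly what the hypothesis $\varphi g^{-1}\in\Phi$ provides, is the standard argument, and your closing observation that a mere $\Phi$-operation would only yield non-expansivity is also accurate. Note that the paper itself does not reproduce a proof of this proposition (it only cites it from \cite{MaIntPa}), but your argument coincides with the one given in that reference.
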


That is, each $g \in G \subseteq \mathrm{Aut}_{\Phi}(X)$ is an isometry of $X$.

Recall that a subgroup of a topological group is topological, and

\begin{proposition}\label{TpSbGp}
\cite{Munk} If $A$ is a subgroup of a topological group $G$, then $cl_G(A)$ is also a subgroup, and hence a topological subgroup of $G$.
\end{proposition}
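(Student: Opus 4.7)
The plan is to verify that $cl_G(A)$ satisfies the subgroup axioms by exploiting the continuity of the group operations; the topological-subgroup conclusion then follows immediately from the remark that precedes the statement.

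First I would note that the identity $e$ lies in $A \subseteq cl_G(A)$, so it suffices to show closure under multiplication and inversion. Let $\mu : G \times G \to G$ denote the multiplication map and $\iota : G \to G$ the inversion, both continuous by the definition of a topological group. The key standard facts I would invoke are that for any continuous map $f$ and any subset $S$ of its domain one has $f(\overline{S}) \subseteq \overline{f(S)}$, and that in the product topology $\overline{A} \times \overline{A} = \overline{A \times A}$.

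Applying these to $\mu$ together with the hypothesis that $A$ is a subgroup (so $\mu(A \times A) \subseteq A$) gives
\[
\mu\bigl(cl_G(A) \times cl_G(A)\bigr) = \mu\bigl(\overline{A \times A}\bigr) \subseteq \overline{\mu(A \times A)} \subseteq \overline{A} = cl_G(A),
\]
which is closure under multiplication. Applying the same principle to $\iota$, together with $A^{-1} = A$, yields
\[
\iota\bigl(cl_G(A)\bigr) \subseteq \overline{\iota(A)} = \overline{A^{-1}} = \overline{A} = cl_G(A),
\]
which is closure under inversion. Hence $cl_G(A)$ is a subgroup of $G$.

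Finally, since $cl_G(A)$ is a subgroup of the topological group $G$, the remark recalled just before the statement — that every subgroup of a topological group is itself a topological group under the subspace topology — shows that $cl_G(A)$ is a topological subgroup. I do not anticipate any real obstacle here; the only point requiring a touch of care is the product-closure identity $\overline{A} \times \overline{A} = \overline{A \times A}$, which is a standard fact about product topologies and can be taken for granted.
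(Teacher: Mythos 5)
Your argument is correct and is exactly the standard one: the paper gives no proof of this proposition, deferring to Munkres, and the Munkres-style argument is precisely the one you use — continuity of multiplication and inversion combined with $f(\overline{S}) \subseteq \overline{f(S)}$ and $\overline{A} \times \overline{A} = \overline{A \times A}$. Nothing further is needed; the product-closure identity you flag is indeed standard and safe to invoke.
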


This proposition will be used in conjunction with

\begin{theorem}\label{AutTopGp}
\cite[Supplementary Methods: Theorem 2.7]{MaIntPa} $\mathrm{Aut}_{\Phi}(X)$ is a topological group and the action $\rho : \Phi \times \mathrm{Aut}_{\Phi}(X) \to \Phi$ is continuous.
\end{theorem}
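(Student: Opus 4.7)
The plan is to verify directly that the group operations of $\mathrm{Aut}_\Phi(X)$ and the evaluation map $\rho$ are continuous with respect to $D_{\mathrm{Aut}} = d_\infty$, using only two structural inputs already stated in the excerpt: every $g \in \mathrm{Aut}_\Phi(X)$ is a $D_X$-isometry of $X$ (Proposition \ref{PhiOpIso}), and every $\varphi \in \Phi$ is $D_X$-non-expansive (Proposition \ref{SigNonExp}). The proof will in fact establish that composition, inversion, and $\rho$ are each $1$-Lipschitz in a natural sense, so continuity is automatic, and no appeal to compactness or nets is required.

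For the continuity of composition I will apply the triangle inequality
\[ D_{\mathrm{Aut}}(f_1 g_1, f_2 g_2) \le D_{\mathrm{Aut}}(f_1 g_1, f_1 g_2) + D_{\mathrm{Aut}}(f_1 g_2, f_2 g_2). \]
The first summand equals $\sup_{x \in X} D_X(f_1 g_1(x), f_1 g_2(x)) = D_{\mathrm{Aut}}(g_1, g_2)$, because $f_1$ is a $D_X$-isometry; the second equals $D_{\mathrm{Aut}}(f_1, f_2)$, because $g_2$ is a bijection of $X$, so the supremum over $x$ may be reindexed by $y := g_2(x)$. Summing, composition is $1$-Lipschitz on the product. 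For inversion I will prove the stronger statement that $g \mapsto g^{-1}$ is an \emph{isometry}: for each $x$, setting $y := g^{-1}(x)$ and applying the isometry $f$ to both arguments gives
\[ D_X(f^{-1}(x), g^{-1}(x)) = D_X(x, f g^{-1}(x)) = D_X(g(y), f(y)), \]
and taking $\sup$ over $x$, equivalently over $y$ by bijectivity of $g^{-1}$, yields $D_{\mathrm{Aut}}(f^{-1}, g^{-1}) = D_{\mathrm{Aut}}(f, g)$.

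For the joint continuity of $\rho$ the same triangle trick gives
\[ D_\Phi(\varphi_1 g_1, \varphi_2 g_2) \le D_\Phi(\varphi_1 g_1, \varphi_1 g_2) + D_\Phi(\varphi_1 g_2, \varphi_2 g_2). \]
The first term is bounded by $\sup_{x \in X} D_X(g_1(x), g_2(x)) = D_{\mathrm{Aut}}(g_1, g_2)$ via Proposition \ref{SigNonExp} applied to $\varphi_1$, while the second equals $D_\Phi(\varphi_1, \varphi_2)$ by the bijectivity of $g_2$. I do not expect any genuine obstacle here; the only point that requires care is keeping track of which factor is bijective at each step so that the change-of-variables inside the supremum is legitimate, and the whole argument ultimately reduces to the isometry statement of Proposition \ref{PhiOpIso}.
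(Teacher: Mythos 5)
Your proof is correct: all three estimates (composition is $1$-Lipschitz, inversion is an isometry, and $\rho$ is jointly $1$-Lipschitz) are valid, and the changes of variable inside the suprema are justified by bijectivity exactly as you say. The paper does not reprove this theorem — it only cites \cite[Supplementary Methods: Theorem 2.7]{MaIntPa} — and your argument is essentially the same direct Lipschitz computation used there, resting on the identity $D_{\mathrm{Aut}} = d_\infty$ together with Propositions \ref{PhiOpIso} and \ref{SigNonExp}.
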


\begin{theorem}\label{PhiBddGBdd}
\cite[Theorem 4]{NQTh} If $\Phi$ is totally bounded, then so is $(G, D_{\mathrm{Aut}})$.
\end{theorem}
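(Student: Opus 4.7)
The plan is to turn the \emph{a priori} infinite supremum defining $D_{\mathrm{Aut}}$ into a finite one by sampling the signal space with a finite $\varepsilon$-net, and then to use the fact that each $g\in G$ maps that net back into $\Phi$ so that the problem is reduced to total boundedness of a finite product $\Phi^n$.

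More concretely, I would fix $\varepsilon>0$ and, using the total boundedness of $\Phi$, choose a finite set $\{\varphi_1,\ldots,\varphi_n\}\subseteq\Phi$ such that every $\varphi\in\Phi$ lies within $\varepsilon$ of some $\varphi_i$ in $\|\cdot\|_\infty$. The key contractive estimate is that for any self-map $f:X\to X$ and any $\varphi,\varphi_i\in\mathbb{R}^X_b$,
\[
\|\varphi f-\varphi_i f\|_\infty=\sup_{x\in X}\bigl|\varphi(f(x))-\varphi_i(f(x))\bigr|\le \|\varphi-\varphi_i\|_\infty,
\]
which is immediate from the definition of the sup norm (and would hold with equality for $f$ bijective, as is our case). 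Applying this to $f,g\in G$ and using the triangle inequality with the closest $\varphi_i$ to a given $\varphi$ gives
\[
\|\varphi f-\varphi g\|_\infty\le 2\varepsilon+\|\varphi_i f-\varphi_i g\|_\infty,
\]
so, taking the supremum over $\varphi\in\Phi$,
\[
D_{\mathrm{Aut}}(f,g)\le 2\varepsilon+\max_{1\le i\le n}\|\varphi_i f-\varphi_i g\|_\infty.
\]

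Next, I would introduce the map
\[
\pi:G\longrightarrow \Phi^n,\qquad g\longmapsto (\varphi_1 g,\ldots,\varphi_n g),
\]
which is well-defined because $G\subseteq\mathrm{Aut}_{\Phi}(X)$. The product $\Phi^n$ with the max metric is totally bounded since $\Phi$ is, so $\pi(G)\subseteq\Phi^n$ is totally bounded as well. Choose a finite $\varepsilon$-net of $\pi(G)$ whose centers are themselves of the form $\pi(g_j)$ (a standard move: pick representatives $g_1,\ldots,g_m\in G$ hitting each net cell, at the cost of at most doubling the radius). Then for every $f\in G$ there exists $j$ with $\max_i\|\varphi_i f-\varphi_i g_j\|_\infty<\varepsilon$, and the displayed inequality gives $D_{\mathrm{Aut}}(f,g_j)<3\varepsilon$. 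Hence $\{g_1,\ldots,g_m\}$ is a finite $3\varepsilon$-net of $G$ in $D_{\mathrm{Aut}}$, and since $\varepsilon>0$ was arbitrary, $(G,D_{\mathrm{Aut}})$ is totally bounded.

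I do not expect a real obstacle here: the argument is a two-line $\varepsilon/3$-approximation once one notices the contractive inequality above. The one point that deserves care is that total boundedness of $\pi(G)\subseteq \Phi^n$ naturally yields nets in the ambient $\Phi^n$, while we need the net centers to come from $G$ itself; this is resolved by the standard device of choosing one preimage in $G$ per net cell, absorbing the resulting factor into the tolerance.
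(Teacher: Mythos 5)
Your proof is correct. The heart of it coincides with the paper's: both arguments fix a finite $\varepsilon$-net $\{\varphi_1,\dots,\varphi_n\}$ of $\Phi$ and use the identity $\|\varphi g-\varphi_k g\|_\infty=\|\varphi-\varphi_k\|_\infty$ (valid because $g$ is a bijection of $X$) together with the triangle inequality to reduce the supremum over all of $\Phi$ in $D_{\mathrm{Aut}}$ to a maximum over the net, at the cost of $2\varepsilon$. Where you diverge is in how the finite data is then exploited. The paper works with the sequential characterization of total boundedness: given a sequence $(g_i)$ in $G$, it extracts nested subsequences so that $(\varphi_k g_{i_j})_j$ is Cauchy for each of the finitely many $k$, and concludes that $(g_{i_j})_j$ is $D_{\mathrm{Aut}}$-Cauchy. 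You instead package the finite family into a single map $\pi:G\to\Phi^n$, invoke total boundedness of the finite product, and then pull a net of $\pi(G)$ back to a net of $G$ via the standard trick of selecting one preimage per cell (which you correctly flag as the only point needing care, since net centers must come from $G$ itself). Your route produces an explicit finite $3\varepsilon$-net directly from the covering definition and avoids the iterated subsequence extraction, which is arguably cleaner; the paper's sequential version has the mild advantage of not needing the net-recentering step and of matching the style of its companion proof of Theorem \ref{PhiBddXBdd}. Both are complete and correct.
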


\begin{proposition}\label{IsoXComp}
\cite[Proposition 1.2.20]{NQTh} If $(X, D_X)$ is a compact metric space, then $(\mathrm{Iso}(X), d_\infty)$ is also compact.
\end{proposition}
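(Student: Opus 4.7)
The plan is to realize $\mathrm{Iso}(X)$ as a closed, equicontinuous, pointwise bounded subset of the space $C(X,X)$ of continuous self-maps equipped with the uniform metric $d_\infty$, and then invoke a suitable version of the Arzel\`a--Ascoli theorem. Since $X$ is compact (hence bounded) and every isometry is in particular $1$-Lipschitz, both hypotheses of Arzel\`a--Ascoli are immediate: for every $\varepsilon>0$ the common modulus $\delta=\varepsilon$ works for all $f\in\mathrm{Iso}(X)$, and for each fixed $x\in X$ the orbit $\{f(x) : f\in\mathrm{Iso}(X)\}$ sits inside the compact set $X$. This yields that the closure of $\mathrm{Iso}(X)$ in $(C(X,X),d_\infty)$ is compact.

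Next I would show that $\mathrm{Iso}(X)$ is closed in $(C(X,X),d_\infty)$. Take a sequence $(f_n)\subseteq\mathrm{Iso}(X)$ with $f_n\to f$ uniformly. Preservation of distances passes to the limit by continuity of $D_X$: for any $x,y\in X$,
\[
D_X(f(x),f(y)) \;=\; \lim_{n\to\infty} D_X(f_n(x),f_n(y)) \;=\; \lim_{n\to\infty} D_X(x,y) \;=\; D_X(x,y),
\]
so $f$ is distance-preserving, and in particular injective.

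The main obstacle is showing that the limit $f$ is also surjective; injectivity plus distance preservation is not in general enough, and one must use the compactness of $X$. Given $y\in X$, pick $x_n\in X$ with $f_n(x_n)=y$, which is possible because each $f_n$ is an isometry, hence surjective. By compactness of $X$ some subsequence $x_{n_k}\to x$. Then, using uniform convergence together with continuity of $f$,
\[
D_X(f(x),y) \;\leq\; D_X(f(x),f(x_{n_k})) + D_X(f(x_{n_k}),f_{n_k}(x_{n_k})) \;\leq\; D_X(f(x),f(x_{n_k})) + d_\infty(f,f_{n_k}),
\]
and both terms tend to zero as $k\to\infty$. Hence $f(x)=y$, establishing surjectivity.

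Putting the pieces together: $\mathrm{Iso}(X)$ is a closed subset of its own relatively compact closure in $(C(X,X),d_\infty)$, so it is itself compact. Since $d_\infty$ restricted to $\mathrm{Iso}(X)$ is precisely the metric referred to in the statement, this gives the claim. I do not expect any hidden difficulty beyond the surjectivity step, which is essentially the only place where compactness of $X$ (as opposed to mere total boundedness or completeness) enters in a non-trivial way.
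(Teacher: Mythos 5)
Your proposal is correct, and its overall skeleton coincides with the paper's: relative compactness of $\mathrm{Iso}(X)$ in $(C(X,X),d_\infty)$ via Arzel\`a--Ascoli, closedness via passing distance preservation to the uniform limit, with surjectivity of the limit as the only delicate point. Where you genuinely diverge is in that surjectivity step. The paper proves it \emph{intrinsically}: starting from an arbitrary $x_0$ it iterates $x_{n+1}:=f(x_n)$, extracts a convergent subsequence by compactness, uses $D_X(x_{n_i},x_{n_j})=D_X(x_0,x_{n_j-n_i})$ to conclude $D_X(x_0,f(X))=0$, and then closedness of the compact image $f(X)$ gives $x_0\in f(X)$. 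This establishes the stronger standalone fact that \emph{any} distance-preserving self-map of a compact metric space is automatically surjective, without reference to the approximating sequence. You instead exploit that each $f_{n}$ is by definition a bijection, pull back $y$ to points $x_n$ with $f_n(x_n)=y$, extract a convergent subsequence, and pass to the limit using uniform convergence; this is shorter and entirely adequate here, since the paper's notion of isometry already includes bijectivity, but it does not yield the auxiliary fact the paper's argument does. Both routes are sound; your estimate that surjectivity is the only place where compactness (rather than completeness or total boundedness) is truly needed matches the structure of the paper's proof.
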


\begin{theorem}\label{AutComp}
\cite[Theorem 5]{NQTh} If $\Phi \subseteq \mathbb{R}^X_b$ and $(X, D_X)$ are both compact metric spaces, then $\mathrm{Aut}_{\Phi}(X)$ is closed in $\mathrm{Iso}(X)$, and hence compact.
\end{theorem}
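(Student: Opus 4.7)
The plan is to reduce the claim to a sequential closedness argument inside the compact metric space $(\mathrm{Iso}(X), d_\infty)$. By Proposition~\ref{IsoXComp}, $\mathrm{Iso}(X)$ is compact, and by Proposition~\ref{PhiOpIso}, $\mathrm{Aut}_\Phi(X) \subseteq \mathrm{Iso}(X)$, so it suffices to prove that $\mathrm{Aut}_\Phi(X)$ is closed in $\mathrm{Iso}(X)$. Pick a sequence $(g_n) \subseteq \mathrm{Aut}_\Phi(X)$ converging to some $g \in \mathrm{Iso}(X)$ with respect to $d_\infty$. Since $g \in \mathrm{Iso}(X)$ is by definition a bijective isometry of $X$, establishing $g \in \mathrm{Aut}_\Phi(X)$ amounts to showing that $\varphi \circ g \in \Phi$ and $\varphi \circ g^{-1} \in \Phi$ for every $\varphi \in \Phi$.

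For the first inclusion I would exploit the non-expansivity of signals. By Proposition~\ref{SigNonExp}, for each $\varphi \in \Phi$ and every $x \in X$,
\[
|\varphi(g_n(x)) - \varphi(g(x))| \leq D_X(g_n(x), g(x)) \leq d_\infty(g_n, g),
\]
so $\|\varphi \circ g_n - \varphi \circ g\|_\infty \leq d_\infty(g_n, g) \to 0$. Each $\varphi \circ g_n$ lies in $\Phi$ because $g_n \in \mathrm{Aut}_\Phi(X)$, and $\Phi$ is compact, hence closed in the Hausdorff space $(\mathbb{R}^X_b, \|\cdot\|_\infty)$; thus the uniform limit $\varphi \circ g$ belongs to $\Phi$.

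The second inclusion requires showing $g_n^{-1} \to g^{-1}$ uniformly, and this is the only step that genuinely uses the isometry hypothesis. For any $x \in X$, set $y := g^{-1}(x)$. Because both $g_n$ and $g$ are bijective isometries,
\[
D_X(g_n^{-1}(x), g^{-1}(x)) = D_X(g_n^{-1}(g(y)), g_n^{-1}(g_n(y))) = D_X(g(y), g_n(y)),
\]
and since $g$ is surjective the supremum over $x$ equals the supremum over $y$, giving $d_\infty(g_n^{-1}, g^{-1}) = d_\infty(g_n, g) \to 0$. Applying the non-expansivity argument of the previous paragraph to the sequence $(\varphi \circ g_n^{-1})$, which lies in $\Phi$ because $g_n^{-1} \in \mathrm{Aut}_\Phi(X)$, then yields $\varphi \circ g^{-1} \in \Phi$, completing the proof.

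I expect the convergence of the inverses to be the main (but still routine) obstacle: one has to carefully use that each $g_n$ and $g$ is a bijective isometry, not merely a continuous self-map, so that the supremum can be transported through $g^{-1}$ and absorbed by the isometry $g_n^{-1}$. Once that identity is in hand, the rest is just Hausdorff closure of $\Phi$ combined with the $1$-Lipschitz property of the signals.
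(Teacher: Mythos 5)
Your proof is correct, but it takes a genuinely different route from the paper's. The paper works in the hyperspace $\mathcal{H}$ of non-empty compact subsets of the non-expansive functions on $X$, equipped with the Hausdorff distance $d_{\mathcal{H}}$: it shows that the orbit map $\chi\colon \mathrm{Iso}(X)\to\mathcal{H}$, $g\mapsto \Phi g$, satisfies $d_{\mathcal{H}}(\Phi g,\Phi h)\le d_\infty(g,h)$ and is therefore continuous, and then identifies $\mathrm{Aut}_\Phi(X)$ as $\chi^{-1}(\Phi)$, the preimage of a closed singleton. You instead argue sequentially: for $g_n\to g$ in $\mathrm{Iso}(X)$ you get $\|\varphi g_n-\varphi g\|_\infty\le d_\infty(g_n,g)$ from Proposition~\ref{SigNonExp}, conclude $\varphi g\in\Phi$ from closedness of the compact set $\Phi$, and handle the inverses via the identity $d_\infty(g_n^{-1},g^{-1})=d_\infty(g_n,g)$, which is valid exactly as you compute it (writing $g^{-1}(x)=g_n^{-1}(g_n(y))$ with $y=g^{-1}(x)$ and transporting the supremum through the bijection $g$). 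Your version is more elementary --- it avoids introducing the Hausdorff metric and the verification that $\Phi g\in\mathcal{H}$ --- and it isolates two reusable facts: that inversion is an isometry of $(\mathrm{Iso}(X),d_\infty)$, and that closedness of $\Phi$ in $\mathbb{R}^X_b$ (rather than full compactness) already suffices for $\mathrm{Aut}_\Phi(X)$ to be closed in $\mathrm{Iso}(X)$; compactness of $\Phi$ and $X$ is then only consumed through Proposition~\ref{IsoXComp} to upgrade closed to compact. The paper's approach buys a one-line topological conclusion (preimage of a closed set) once the hyperspace machinery is in place, and the estimate $d_{\mathcal{H}}(\Phi g,\Phi h)\le d_\infty(g,h)$ is of independent interest, but for this theorem alone your direct argument is leaner.
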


\begin{theorem}\label{SpGNComp}
\cite[Theorem 7]{MaIntPa} The space $(\mathcal{F}_T^{\mathrm{all}}, D_{\mathrm{GENEO}})$ of GENEOs $(F, T) : (\Phi, G) \to (\Psi, H)$ is compact whenever the spaces $\Phi$ and $\Psi$ are compact.
\end{theorem}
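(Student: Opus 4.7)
The plan is to realize $\mathcal{F}_T^{\mathrm{all}}$ as a subset of the sup-metric space of continuous maps from the compact metric space $(\Phi, D_\Phi)$ into the compact metric space $(\Psi, D_\Psi)$, and then combine the Arzel\`a--Ascoli theorem with a closedness argument, since closed subsets of (relatively) compact metric spaces are compact.

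First, I would verify the hypotheses of Arzel\`a--Ascoli. The non-expansivity condition $\|F(\varphi_1) - F(\varphi_2)\|_\infty \le \|\varphi_1 - \varphi_2\|_\infty$ shows that every $F \in \mathcal{F}_T^{\mathrm{all}}$ is $1$-Lipschitz with respect to $D_\Phi$ and $D_\Psi$, so the family is uniformly equicontinuous. For each fixed $\varphi \in \Phi$, the evaluation set $\{F(\varphi) : F \in \mathcal{F}_T^{\mathrm{all}}\}$ sits inside the compact space $\Psi$, hence is relatively compact. Arzel\`a--Ascoli then yields that $\mathcal{F}_T^{\mathrm{all}}$ has compact closure in $(C(\Phi,\Psi), D_{\mathrm{GENEO}})$, where $C(\Phi,\Psi)$ denotes the continuous maps $\Phi\to\Psi$ endowed with uniform convergence.

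Second, I would check that $\mathcal{F}_T^{\mathrm{all}}$ is itself closed in $C(\Phi,\Psi)$. Suppose $(F_n) \subseteq \mathcal{F}_T^{\mathrm{all}}$ converges uniformly to some $F : \Phi \to \Psi$; the limit values $F(\varphi)$ lie in $\Psi$ because $\Psi$, being compact, is closed in $\mathbb{R}^Y_b$. Non-expansivity of $F$ is preserved by passing to the limit in the non-expansivity inequality for the $F_n$. For the equivariance identity, fix $\varphi \in \Phi$ and $g \in G$: since $T(g)$ is a bijection of $Y$, right-composition with $T(g)$ is an isometry of $(\mathbb{R}^Y_b, \|\cdot\|_\infty)$, so $F_n(\varphi) \circ T(g) \to F(\varphi) \circ T(g)$, while simultaneously $F_n(\varphi g) \to F(\varphi g)$. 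The identity $F_n(\varphi g) = F_n(\varphi) \circ T(g)$ thus transfers to the limit, giving $F \in \mathcal{F}_T^{\mathrm{all}}$.

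The proof has no substantive obstacle: the only delicate points are the stability of the codomain constraint $F(\Phi) \subseteq \Psi$ and of the equivariance identity under uniform limits, both handled by the closedness of $\Psi$ and the isometry property of right-composition with elements of $H$. All the genuine compactness content is supplied by Arzel\`a--Ascoli applied to $1$-Lipschitz maps between compact metric spaces.
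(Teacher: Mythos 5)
Your argument is correct. The paper does not actually prove this statement itself---it is quoted from \cite{MaIntPa}---but the proof given there is in substance the same as yours: a sequential-compactness/equicontinuity argument (essentially Arzel\`a--Ascoli applied to the $1$-Lipschitz family $\mathcal{F}_T^{\mathrm{all}}$ viewed inside the continuous maps from the compact space $\Phi$ to the compact space $\Psi$ with the uniform metric), combined with the verification that uniform limits preserve non-expansivity, the codomain constraint, and $T$-equivariance, exactly as in your closedness step.
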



\section{Our compactification results}
\label{CR}

Let $(\Phi, G)$, $\mathrm{dom}(\Phi)=X$ and $(\Psi, H)$, $\mathrm{dom}(\Psi)=Y$ be perception pairs and $\mathcal{F} \subseteq \mathcal{F}^{\mathrm{all}}_{T}$, where $\mathcal{F}_T^{\mathrm{all}}$ denotes, as usual, the space of all GENEOs $(F, T) : (\Phi, G) \to (\Psi, H)$ with respect to a fixed homomorphism $T : G \to H$. 

In this section we will be assuming that
\begin{description}
  \item[$i)$] $\Phi$ and $\Psi$ are totally bounded, and are rich enough to endow each of $X$ and $Y$ with metric structures;
  \item[$ii)$] the collection of sets $\{F(\Phi) \mid F \in \mathcal{F}\}$ covers $\Psi$.
\end{description}
We know that even if $\Phi$ and $\Psi$ are compact, let alone being totally bounded, $X, G, Y$, and $H$ need not be compact \cite{MaIntPa}, though $\mathcal{F}_T^{\mathrm{all}}$ is indeed compact in that case. Moreover, an arbitrary subspace $\mathcal{F}$ of $\mathcal{F}_T^{\mathrm{all}}$ need not necessarily be compact either. Since compactness is an important property, as it provides us with essential guarantees in machine learning context, it is natural to prefer compact spaces in practical applications. We therefore ask: If compactness of $X, G, Y$, and $H$ is not guaranteed even by the compactness of data sets $\Phi$ and $\Psi$, let alone their total boundedness, can we at least prove that these spaces can be isometrically and densely embedded in compact ones while the corresponding sought after compact spaces preserve the former mutual relations between the original spaces? That is, can we find compactifications of perception pairs? Furthermore, can we find compactifications of the spaces of GENEOs? These notions need being made precise, which we do in the sequel, and prove that our assumptions are sufficient to grant the answer to this question in the affirmative.

Somewhat formally, \textit{given} the perception pairs $(\Phi, G)$, $\mathrm{dom}(\Phi)=X$ and $(\Psi, H)$, $\mathrm{dom}(\Psi)=Y$ and a space $\mathcal{F} \subseteq \mathcal{F}_T^{\mathrm{all}}$ of GENEOs $(F, T) : (\Phi, G) \to (\Psi, H)$ with respect to a fixed homomorphism $T : G \to H$, we assume that the data sets $\Phi$ and $\Psi$ are totally bounded and rich enough to endow $X$ and $Y$ with metric structures, and the collection $\{F(\Phi) \mid F \in \mathcal{F} \}$ covers the space $\Psi$. Under these assumptions, we \textit{find} perception pairs $(\Phi^*, G^*)$, $\mathrm{dom}(\Phi^*)=X^*$ and $(\Psi^*, H^*)$, $\mathrm{dom}(\Psi^*)=Y^*$, a space ${\mathcal{F}}^* \subseteq \mathcal{F}_{T^*}^{\mathrm{all}}$ of GENEOs $(F^*, T^*) : (\Phi^*, G^*) \to (\Psi^*, H^*)$ with respect to a fixed homomorphism $T^* : G^* \to H^*$, and isometric embeddings $j_1 : X \to X^*$, $j_2 : Y \to Y^*$, $i_1 : \Phi \to \Phi^*$, $i_2 : \Psi \to \Psi^*$, $k_1: G \to G^*$, $k_2 : H \to H^*$, and $f : \mathcal{F} \to {\mathcal{F}}^*$. We \textit{require} that the spaces $\Phi^*, G^*, X^*, \Psi^*, H^*, Y^*$ and ${\mathcal{F}}^*$ are all compact, and the following commutativity conditions are satisfied:
$i_1(\varphi) \circ j_1 = \varphi$ for every $\varphi \in \Phi$, $i_2(\psi) \circ j_2 = \psi$ for every $\psi \in \Psi$; $k_1(g) \circ j_1 = j_1 \circ g$ for every $g \in G$, $k_2(h) \circ j_2 = j_2 \circ h$ for every $h \in H$; $i_2 \circ F = f(F) \circ i_1$ for every $F \in \mathcal{F}$; and $k_2 \circ T = T^* \circ k_1$. 

These compatibility conditions formalize the requirement that the spaces $\Phi, G, X, \Psi, H, Y$ and ${\mathcal{F}}$ do not lose any of their metric or topological properties while being viewed as subspaces of $\Phi^*, G^*, X^*, \Psi^*, H^*, Y^*$ and ${\mathcal{F}}^*$ respectively. In this case, we say that $(\Phi^*, G^*)$, $\mathrm{dom}(\Phi^*)=X^*$ is a compactification of the perception pair $(\Phi, G)$, $\mathrm{dom}(\Phi)=X$, and $\mathcal{F}^*$ is a compactification of the space $\mathcal{F}$ of GENEOs. We will give formal definitions in the forthcoming subsections. Our assumptions here are mild; in many practical applications, they are already satisfied.

Precisely, the intermediary results and constructions in Subsections 4.1 and 4.2 are aimed at proving that every perception pair $(\Phi, G)$, $\mathrm{dom}(\Phi) = X$, with totally bounded $\Phi$ endowing $X$ with a metric structure, admits a compactification $(\Phi^*, G^*)$, $\mathrm{dom}(\Phi^*) = X^*$. Similarly, the Subsections 4.3 and 4.4 are devoted to proving that every space $\mathcal{F} \subseteq \mathcal{F}_{T}^{\mathrm{all}}$ of GENEOs $(F, T) : (\Phi, G) \to (\Psi, H)$ with $\mathrm{dom}(\Phi) = X$ and $\mathrm{dom}(\Psi) = Y$ such that the collection $\{ F(\Phi) \mid F \in \mathcal{F} \}$ covers $\Psi$ admits a compactification $\mathcal{F}^*$, provided the data sets $\Phi$ and $\Psi$ are totally bounded and endow $X$ and $Y$ with metric structures. Again, this proof will require several auxiliary constructions and corresponding results.

In order to set the stage for the requisite compactification of the perception pair $(\Phi, G)$, $\mathrm{dom}(\Phi)=X$, we consider the unique metric completion $\hat{X}$ of $X$, and assume that $X \subseteq \hat{X}$. Since $X$ is totally bounded by Theorem \ref{PhiBddXBdd}, $\hat{X}$ is totally bounded by virtue of the isometric embedding $j : X \to \hat{X}$, and hence compact. This serves as the sought after $X^*$ in our construction. Then we use the measurements $\varphi \in \Phi$ and isometries $g \in G$ to define the  measurements $\hat{\varphi} : \Phi \to \mathbb{R}$ on the compact space $\hat{X}$ and its corresponding isometries $\hat{g} : \hat{X} \to \hat{X}$.


\subsection{The Extension of Signals}
\label{ExSg}

It can easily be proved that

\begin{proposition}
Let  $(M,d_M)$ be a metric space, and $S$ a subset of $M$. Then every non-expansive map $f \colon S \to \mathbb{R}$ admits a unique non-expansive extension $\bar{f} \colon \overline{S} \to \mathbb{R}$.
\end{proposition}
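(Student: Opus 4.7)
The plan is to prove uniqueness and existence separately, with uniqueness being immediate from density plus continuity, and existence via a standard Cauchy-sequence extension argument.

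For uniqueness, I would first observe that any non-expansive map is $1$-Lipschitz, hence continuous. So if $\bar{f}_1, \bar{f}_2 \colon \overline{S} \to \mathbb{R}$ are both continuous extensions of $f$, they agree on the dense subset $S$ of $\overline{S}$, hence agree everywhere. This disposes of uniqueness without any further work.

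For existence, I would construct $\bar{f}$ pointwise. Fix $x \in \overline{S}$ and pick a sequence $(s_n)_{n \in \mathbb{N}}$ in $S$ with $s_n \to x$. Such a sequence is Cauchy in $M$, and since $|f(s_m) - f(s_n)| \leq d_M(s_m, s_n)$ by non-expansivity, the sequence $(f(s_n))$ is Cauchy in $\mathbb{R}$ and therefore converges. Define $\bar{f}(x) := \lim_{n \to \infty} f(s_n)$. To see this is well-defined, if $(s_n')$ is another sequence in $S$ converging to $x$, interleave the two sequences into a single sequence still converging to $x$; by the same Cauchy argument, the image sequence converges, forcing both subsequential limits to coincide. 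When $x \in S$, taking the constant sequence $s_n \equiv x$ shows $\bar{f}(x) = f(x)$, so $\bar{f}$ extends $f$.

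Finally, to verify non-expansivity of $\bar{f}$, take $x, y \in \overline{S}$ with approximating sequences $(s_n) \to x$ and $(t_n) \to y$ in $S$. Then
\[
|f(s_n) - f(t_n)| \leq d_M(s_n, t_n)
\]
for every $n$, and letting $n \to \infty$, continuity of $d_M$ in both arguments together with the definition of $\bar{f}$ yields $|\bar{f}(x) - \bar{f}(y)| \leq d_M(x,y)$. I do not foresee any real obstacle: all the ingredients are standard, the only mild care needed is in checking well-definedness of the limit, which is handled by the interleaving trick. Note in particular that completeness of $\mathbb{R}$ (not of $M$) is what makes the construction work, so no hypothesis on $M$ beyond being a metric space is required.
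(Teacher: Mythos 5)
Your proof is correct and follows exactly the construction the paper intends: the paper omits the proof as ``easy,'' but its Corollary~\ref{ExtNonExp} defines the extension by $\hat{\varphi}(\hat{x}) = \lim_{n\to\infty}\varphi(x_n)$, which is precisely your Cauchy-sequence argument, with well-definedness, the extension property, non-expansivity, and uniqueness-by-density all handled as you describe. No gaps.
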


Since each $\varphi \in \Phi$ is non-expansive by Proposition \ref{SigNonExp}, we have

\begin{corollary}\label{ExtNonExp}
Each signal $\varphi \in \Phi$ can be uniquely extended to a non-expansive signal $\hat{\varphi}\colon \hat{X} \to \mathbb{R}$, where $\hat X$ is the completion of $X=\mathrm{dom}(\Phi)$, by setting \[ \hat{\varphi}(\hat{x}) = \lim_{n \to \infty} \varphi(x_n), \] for any arbitrary sequence $(x_n)_{n \in \mathbb{N}}$ in $X$ that converges to $\hat{x} \in \hat{X}$.
\end{corollary}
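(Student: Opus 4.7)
The plan is to derive the corollary as an immediate application of the preceding general extension proposition combined with Proposition \ref{SigNonExp}. By Proposition \ref{SigNonExp}, any $\varphi \in \Phi$ is non-expansive as a map from $(X, D_X)$ into $\mathbb{R}$. Viewing $X$ as a subset of its metric completion $\hat{X}$ and noting that $X$ is dense in $\hat{X}$ by definition of completion, so that the closure of $X$ inside $\hat{X}$ coincides with $\hat{X}$ itself, the preceding proposition, applied with $M = \hat{X}$ and $S = X$, yields a unique non-expansive extension $\hat{\varphi} : \hat{X} \to \mathbb{R}$.

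The remaining task is to justify the explicit limit formula. Given $\hat{x} \in \hat{X}$ and any sequence $(x_n)_{n \in \mathbb{N}}$ in $X$ converging to $\hat{x}$, the non-expansivity of $\varphi$ gives
\[
|\varphi(x_n) - \varphi(x_m)| \leq D_X(x_n, x_m) = \hat{D}_{\hat{X}}(x_n, x_m),
\]
so $(\varphi(x_n))_{n\in\mathbb{N}}$ is Cauchy in the complete space $\mathbb{R}$ and therefore converges. Since $\hat{\varphi}$ is non-expansive and hence continuous, and since $\hat{\varphi}$ extends $\varphi$, one obtains
\[
\hat{\varphi}(\hat{x}) = \lim_{n \to \infty} \hat{\varphi}(x_n) = \lim_{n \to \infty} \varphi(x_n).
\]
Independence of this limit from the particular sequence chosen follows from $|\varphi(x_n) - \varphi(x_n')| \leq D_X(x_n, x_n') \to 0$ whenever both $(x_n)$ and $(x_n')$ converge to $\hat{x}$, which reconciles the definition with the value prescribed by the extension proposition.

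I do not expect a genuine obstacle here: the heart of the argument is the preceding extension proposition, which is a standard Cauchy-completion fact for non-expansive maps into a complete target. Once that is granted, the corollary is essentially a one-line application plus the routine verification of the limit formula, with uniqueness following from the familiar observation that two continuous maps that agree on the dense subset $X \subseteq \hat{X}$ must coincide everywhere on $\hat{X}$.
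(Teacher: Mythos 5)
Your proposal is correct and matches the paper's approach exactly: the paper derives the corollary as an immediate consequence of the preceding extension proposition (applied with $M=\hat{X}$, $S=X$, so $\overline{S}=\hat{X}$) together with Proposition \ref{SigNonExp}, leaving the verification of the limit formula implicit. Your additional check that the limit exists, is sequence-independent, and agrees with the extension is the routine detail the paper omits, and it is carried out correctly.
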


Let us put \[ \hat{\Phi} := \{ \hat{\varphi} : \hat{X} \to \mathbb{R} \mid \varphi \in \Phi \}.\]

Since the extensions $\hat{\varphi}\colon \hat{X} \to \mathbb{R}$ of signals $\varphi \in \Phi$ are unique, we get a one-to-one correspondence $i\colon \Phi \to \hat{\Phi}$ between signals in $\Phi$ and signals in $\hat{\Phi}$ given by \[ \varphi \mapsto \hat{\varphi}. \]

The notations $\mathbb{R}^{\hat{X}}_b$, $\mathrm{Iso}(\hat{X})$, $\hat{d}_\infty$ and $\mathrm{Aut}_{\hat{\Phi}}(\hat{X})$ are self-explanatory. Clearly, $\hat{\Phi} \subseteq \mathbb{R}^{\hat{X}}_b$.

The set $\hat{\Phi}$ of extended signals induces the pseudo-metric $D_{\hat{X}}$ on the completion $\hat{X}$ given by

\[ D_{\hat{X}}(x, y) := \sup_{\varphi \in \hat{\Phi}} \left \vert \varphi(x) - \varphi(y) \right \vert, \quad x, y \in \hat{X}. \]

At this point, the completion $\hat{X}$ appears to be equipped with the previously defined metric $\hat{D}_{\hat{X}}$ associated with the completion, and the pseudo-metric $D_{\hat{X}}$. It is worth investigating their relationship. We will prove later in this subsection that these seemingly distinct functions are in fact numerically equal on $\hat{X}$, thereby establishing in addition that $D_{\hat{X}}$ is in fact a metric.

Before proceeding, we record another general proposition, omitting the easy proof, which will be used frequently in the paper:
\begin{proposition}\label{SpDns}
Let $K$ be a compact topological space and $A$ be dense in $K$. If $f$ is a continuous real-valued function on $K$, then 
\[\sup f(K) = \sup f(A).\]
\end{proposition}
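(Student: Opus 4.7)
The plan is to prove the two inequalities $\sup f(A) \leq \sup f(K)$ and $\sup f(K) \leq \sup f(A)$ separately. The first inequality is immediate since $A \subseteq K$, so the content of the proposition lies in establishing the reverse inequality $\sup f(K) \leq \sup f(A)$.

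For the nontrivial direction, I would first invoke the fact that the continuous image of a compact space is compact: $f(K) \subseteq \mathbb{R}$ is compact, hence closed and bounded. In particular $\sup f(K)$ is finite and is attained, i.e., there exists $x_0 \in K$ with $f(x_0) = \sup f(K)$. Using the density of $A$ in $K$, I would then pick a net $(x_\alpha)_\alpha$ in $A$ converging to $x_0$ (sequences do not suffice in general because $K$ is only assumed to be a topological space, not metrizable, so one needs nets). Continuity of $f$ gives $f(x_\alpha) \to f(x_0)$ in $\mathbb{R}$.

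From $f(x_\alpha) \leq \sup f(A)$ for every $\alpha$, passing to the limit along the net yields $f(x_0) \leq \sup f(A)$, and thus $\sup f(K) = f(x_0) \leq \sup f(A)$, completing the proof.

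There is no serious obstacle here; the only subtlety worth flagging is the use of nets rather than sequences, since $K$ is not assumed metrizable. The compactness hypothesis is used precisely to guarantee attainment of the supremum on $K$, which turns the density argument into an honest limit comparison. The result then follows routinely from the standard interplay between continuity, density, and closedness of $f(K)$.
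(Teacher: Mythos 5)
Your proof is correct. Note that the paper explicitly omits the proof of this proposition ("omitting the easy proof"), so there is no argument in the text to compare against; your write-up fills that gap soundly, and your caution about using nets rather than sequences is well placed, since $K$ is only assumed to be a topological space (even though in every application in the paper $K$ is in fact a metric space, where sequences would do). One remark: compactness enters your argument only to guarantee that $\sup f(K)$ is attained, but it is not actually needed for the statement. Since $A$ is dense and $f$ is continuous, $f(K)=f(\overline{A})\subseteq\overline{f(A)}$, and $\sup \overline{S}=\sup S$ for any $S\subseteq\mathbb{R}$ (in the extended reals), which gives $\sup f(K)\leq\sup f(A)$ in one line without attainment or nets. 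Either route is fine; yours buys a concrete maximizing point at the cost of the net machinery, while the closure argument is shorter and shows the hypothesis of compactness is superfluous.
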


We are ready now to prove the following theorem:
\begin{theorem}\label{ExtSgIso}
The correspondence $i \colon \Phi \to  \hat{\Phi}$ is an isometry.
\end{theorem}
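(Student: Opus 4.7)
The plan is to verify that $i$ preserves distances, since it is already established to be a bijection. Concretely, for $\varphi_1,\varphi_2 \in \Phi$ I would show
\[
\sup_{x \in X} \lvert \varphi_1(x) - \varphi_2(x) \rvert \;=\; \sup_{\hat{x} \in \hat{X}} \lvert \hat{\varphi}_1(\hat{x}) - \hat{\varphi}_2(\hat{x}) \rvert.
\]
The inequality $\leq$ is automatic because $\hat{\varphi}_i$ extends $\varphi_i$, so the supremum on the right is taken over a larger set. The opposite inequality is where the structural assumptions enter.

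First I would set $f(\hat{x}) := \lvert \hat{\varphi}_1(\hat{x}) - \hat{\varphi}_2(\hat{x}) \rvert$ and observe that $f$ is continuous on $\hat{X}$: both $\hat{\varphi}_1$ and $\hat{\varphi}_2$ are non-expansive by Corollary \ref{ExtNonExp}, hence continuous, and the absolute value of their difference is then continuous as well. Next I would check that $\hat{X}$ is compact. Since we are working under the running assumption that $\Phi$ is totally bounded, Theorem \ref{PhiBddXBdd} gives that $(X, D_X)$ is totally bounded; total boundedness passes to the completion, and the completion is complete, so $\hat{X}$ is complete and totally bounded, hence compact.

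With $\hat{X}$ compact, $f$ continuous, and $X$ dense in $\hat{X}$ (by construction of the completion), Proposition \ref{SpDns} applies and yields
\[
\sup_{\hat{x} \in \hat{X}} f(\hat{x}) \;=\; \sup_{x \in X} f(x).
\]
For $x \in X$ we have $\hat{\varphi}_i(x) = \varphi_i(x)$, so the right-hand side equals $\sup_{x \in X} \lvert \varphi_1(x) - \varphi_2(x) \rvert = D_\Phi(\varphi_1, \varphi_2)$. Combining this with the left-hand side, which is $D_{\hat{\Phi}}(\hat{\varphi}_1, \hat{\varphi}_2)$, gives the desired equality $D_\Phi(\varphi_1,\varphi_2) = D_{\hat{\Phi}}(\hat{\varphi}_1,\hat{\varphi}_2)$, so $i$ is an isometry.

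I do not expect a genuine obstacle here; the proof is essentially a direct application of density plus continuity of the extended signals. The only minor point to be careful about is that Proposition \ref{SpDns} as stated requires the ambient space to be compact, which is why I first invoke Theorem \ref{PhiBddXBdd} to upgrade total boundedness of $\Phi$ to compactness of $\hat{X}$ before applying it.
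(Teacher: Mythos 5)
Your proposal is correct and follows essentially the same route as the paper: both arguments reduce the claim to the equality of suprema over $X$ and $\hat{X}$, obtained by applying Proposition \ref{SpDns} to the continuous function $\hat{x} \mapsto \lvert \hat{\varphi}_1(\hat{x}) - \hat{\varphi}_2(\hat{x}) \rvert$ on the compact space $\hat{X}$ with dense subset $X$. Your additional remarks (the one-sided inequality being automatic, and the explicit verification that $\hat{X}$ is compact via Theorem \ref{PhiBddXBdd}) merely make explicit what the paper establishes at the start of Section \ref{CR}.
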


\begin{proof}
The map $i$ is surjective by construction; it will suffice to prove that it preserves distances, i.e.,
\[\| i(\varphi_1)- i (\varphi_2)\|_\infty = \| \varphi_1 - \varphi_2 \|_\infty,\] for any $\varphi_1, \varphi_2 \in \Phi$. Since $X$ is dense in the compact topological space $\hat{X}$, and $i(\varphi) = \hat{\varphi}$ is an extension of $\varphi \in \Phi$, by Proposition \ref{SpDns} we have
\begin{align*}
   \| i(\varphi_1) - i (\varphi_2) \|_\infty & := \sup_{x \in \hat{X}} \vert \widehat{\varphi_1}(x) - \widehat{\varphi_2}(x) \vert \\
   & = \sup_{x \in X} \vert \widehat{\varphi_1}(x) - \widehat{\varphi_2}(x) \vert \\
   & = \sup_{x \in X} \vert \varphi_1(x)- \varphi_2(x) \vert \\
   & =: \| \varphi_1 - \varphi_2 \|_\infty,
\end{align*}
for any $\varphi_1, \varphi_2 \in \Phi$.
Therefore, $i$ is an isometry.
\end{proof}

\begin{corollary}\label{CpPhiTBdd}
The space $\hat{\Phi}$ of extended signals is totally bounded.
\end{corollary}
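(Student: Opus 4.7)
The plan is essentially immediate from what has just been established. By the standing hypothesis at the start of Section \ref{CR}, the data set $\Phi$ is totally bounded. By Theorem \ref{ExtSgIso}, the correspondence $i \colon \Phi \to \hat{\Phi}$ is a surjective isometry (with respect to the uniform norms on $\mathbb{R}^X_b$ and $\mathbb{R}^{\hat{X}}_b$ respectively). Since total boundedness is preserved under surjective isometries between metric spaces, $\hat{\Phi}$ inherits total boundedness from $\Phi$.

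Concretely, I would spell this out in one line: fix $\varepsilon > 0$, pick a finite $\varepsilon$-net $\{\varphi_1, \ldots, \varphi_n\}$ for $\Phi$, and observe that $\{\hat{\varphi}_1, \ldots, \hat{\varphi}_n\} = \{i(\varphi_1), \ldots, i(\varphi_n)\}$ is a finite $\varepsilon$-net for $\hat{\Phi}$, because for any $\hat{\varphi} = i(\varphi) \in \hat{\Phi}$ there is some $k$ with $\|\varphi - \varphi_k\|_\infty < \varepsilon$, whence $\|\hat{\varphi} - \hat{\varphi}_k\|_\infty = \|i(\varphi) - i(\varphi_k)\|_\infty = \|\varphi - \varphi_k\|_\infty < \varepsilon$ by the isometry property. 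There is no real obstacle here; the only thing worth being explicit about is that the isometry statement in Theorem \ref{ExtSgIso} is exactly what transports the $\varepsilon$-net from $\Phi$ over to $\hat{\Phi}$.
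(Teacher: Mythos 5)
Your argument is correct and is exactly the route the paper intends: the corollary is stated without proof precisely because it follows immediately from Theorem \ref{ExtSgIso} together with the standing total-boundedness assumption on $\Phi$, just as you say. Your explicit transport of the finite $\varepsilon$-net through the surjective isometry $i$ is a fine way to make the one-line deduction fully rigorous.
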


\begin{remark}\label{ComSg}
Let us consider the isometry $i:\Phi\to\hat\Phi$ and the inclusion $j:X\hookrightarrow\hat X$. Since $\hat{\varphi}$ extends $\varphi$, the following natural commutativity condition holds for each $\varphi \in \Phi$: \[ i(\varphi) \circ j = \varphi. \]
\end{remark}

The total boundedness of $\hat{\Phi}$ allows us to prove the following crucial statement.

\begin{proposition}\label{PsMetEq}
On $\hat{X}$, $D_{\hat{X}}=\hat{D}_{\hat{X}}$.
\end{proposition}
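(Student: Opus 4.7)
The plan is to establish the two inequalities $D_{\hat{X}}\le \hat{D}_{\hat{X}}$ and $\hat{D}_{\hat{X}}\le D_{\hat{X}}$ pointwise on $\hat{X}\times \hat{X}$, with the crucial input for the harder direction being the total boundedness of $\hat{\Phi}$ supplied by Corollary \ref{CpPhiTBdd}. For the easy inequality I note that each $\hat{\varphi}\in \hat{\Phi}$, being the extension granted by Corollary \ref{ExtNonExp}, is non-expansive with respect to $\hat{D}_{\hat{X}}$, so $|\hat{\varphi}(\hat{x})-\hat{\varphi}(\hat{y})|\le \hat{D}_{\hat{X}}(\hat{x},\hat{y})$ for all $\hat{x},\hat{y}\in \hat{X}$, and taking the supremum over $\hat{\varphi}\in \hat{\Phi}$ gives $D_{\hat{X}}(\hat{x},\hat{y})\le \hat{D}_{\hat{X}}(\hat{x},\hat{y})$ at once.

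For the reverse inequality I would fix $\hat{x},\hat{y}\in\hat{X}$ and sequences $(x_n),(y_n)$ in $X$ converging to $\hat{x}$ and $\hat{y}$ respectively, so that $\hat{D}_{\hat{X}}(\hat{x},\hat{y})=\lim_{n\to\infty} D_X(x_n,y_n)$ by definition of the completion metric. Given $\varepsilon>0$, total boundedness of $\hat{\Phi}$ supplies a finite $\varepsilon$-net $\{\hat{\varphi}_1,\ldots,\hat{\varphi}_k\}\subseteq \hat{\Phi}$. For arbitrary $\varphi\in\Phi$, choosing an index $i$ with $\|\hat{\varphi}-\hat{\varphi}_i\|_\infty<\varepsilon$ and using that $\varphi$ agrees with $\hat\varphi$ on $X$ yields
\[ |\varphi(x_n)-\varphi(y_n)| \le |\hat{\varphi}_i(x_n)-\hat{\varphi}_i(y_n)|+2\varepsilon \le \max_{1\le j\le k} |\hat{\varphi}_j(x_n)-\hat{\varphi}_j(y_n)|+2\varepsilon, \]
so that taking the supremum over $\varphi\in\Phi$ gives $D_X(x_n,y_n)\le \max_{j} |\hat{\varphi}_j(x_n)-\hat{\varphi}_j(y_n)|+2\varepsilon$.

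Passing to the limit as $n\to\infty$, the continuity of each of the finitely many $\hat{\varphi}_j$ forces the right hand side to tend to $\max_j|\hat{\varphi}_j(\hat{x})-\hat{\varphi}_j(\hat{y})|+2\varepsilon$, which is manifestly bounded above by $D_{\hat{X}}(\hat{x},\hat{y})+2\varepsilon$, while the left hand side tends to $\hat{D}_{\hat{X}}(\hat{x},\hat{y})$. Since $\varepsilon$ is arbitrary, this yields $\hat{D}_{\hat{X}}\le D_{\hat{X}}$, and combined with the first direction the proposition follows; as a byproduct, $D_{\hat{X}}$ is in fact a genuine metric on $\hat{X}$, not merely a pseudo-metric.

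The main obstacle, as the paragraph preceding the statement already hints, is the interchange of a supremum with a limit: inspecting a single $\hat{\varphi}$ at a time only recovers the easy inequality, whereas the opposite estimate needs uniform control of the deviations $|\hat{\varphi}(x_n)-\hat{\varphi}(y_n)|$ across all of $\hat{\Phi}$ simultaneously. The finite $\varepsilon$-net granted by Corollary \ref{CpPhiTBdd} is exactly the device that reduces the unruly supremum over $\hat{\Phi}$ to a finite maximum, where the limit commutes with the maximum for free by continuity of each extended signal.
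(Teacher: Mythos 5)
Your proof is correct. It does, however, take a different route from the paper's. The paper's own argument is topological and leans on already-established machinery: since $\hat{\Phi}$ is totally bounded, Theorem \ref{InitTop} gives that $\tau_{D_{\hat{X}}}$ is the initial topology induced by $\hat{\Phi}$, which is coarser than $\tau_{\hat{D}_{\hat{X}}}$ because the extensions $\hat{\varphi}$ are continuous for $\hat{D}_{\hat{X}}$; hence $D_{\hat{X}}$ is a continuous function for $\hat{D}_{\hat{X}}$, and the equality follows in one stroke from $\hat{D}_{\hat{X}}(\hat{x},\hat{y})=\lim_n D_X(x_n,y_n)=\lim_n D_{\hat{X}}(x_n,y_n)=D_{\hat{X}}(\hat{x},\hat{y})$, using that $D_{\hat{X}}$ restricts to $D_X$ on $X$. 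You instead prove the two inequalities separately and, for the nontrivial one, run an explicit finite $\varepsilon$-net argument to exchange the supremum over $\hat{\Phi}$ with the limit in $n$. This is in effect an inlining of the key estimate underlying Theorem \ref{InitTop} (the bound $D_X(x,y)\le\max_j|\varphi_j(x)-\varphi_j(y)|+2\varepsilon$ is precisely what makes $D_X$-balls open in the initial topology), so the mathematical engine is the same even though the packaging differs. What your version buys is self-containedness and transparency about exactly where total boundedness enters; what the paper's version buys is brevity, since Theorem \ref{InitTop} is already on the shelf. Both correctly isolate the sup-versus-limit interchange as the crux, and both deliver the byproduct that $D_{\hat{X}}$ is a genuine metric.
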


\begin{proof}
Since $\hat{\Phi}$ is totally bounded, $D_{\hat{X}}$ induces on $\hat{X}$ the initial topology with respect to $\hat{\Phi}$ by Theorem \ref{InitTop}. Moreover, by Corollary \ref{ExtNonExp} the functions in $\hat{\Phi}$ are continuous with respect to $\hat{D}_{\hat{X}}$ as well. Hence, the topology induced by $\hat{D}_{\hat{X}}$ is finer than the topology induced by $D_{\hat{X}}$. This directly implies that $D_{\hat{X}}$ is a continuous function with respect to $\hat{D}_{\hat{X}}$. Then, we have
\[\hat{D}_{\hat{X}}(\hat{x},\hat{y}):= 
\lim_{n \to \infty} D_X(x_n,y_n) = \lim_{n \to\infty} D_{\hat{X}}(x_n,y_n)= D_{\hat{X}}(\hat{x},\hat{y})\] 
where $\hat{x},\hat{y} \in \hat{X}$, and $(x_n)_{n \in \mathbb{N}},(y_n)_{n \in \mathbb{N}}$ are sequences in $X$ converging to $\hat{x}$ and $\hat{y}$ respectively, with reference to the topology induced by $\hat{D}_{\hat{X}}$.
\end{proof}

So, $D_{\hat{X}}$ is a metric. As pointed out in Section \ref{NT}, this directly implies that the pseudo-metric $\hat{D}_{\mathrm{Aut}}$ induced by $\hat{\Phi}$ on $\mathrm{Aut}_{\hat{\Phi}}(\hat{X})$ is also a metric.


\subsection{The Isometries of the Completions}
\label{IsoComp}

We now turn to the construction of an auxiliary topological group $\hat{G} \subseteq \mathrm{Aut}_{\hat{\Phi}}(\hat{X})$, isometric to the given group $G \subseteq \mathrm{Aut}_{\Phi}(X)$, whose closure $G^* = \overline{\hat{G}}$ in the compact space $\mathrm{Iso}(\hat{X})$, finally, serves our purposes.


\subsubsection{The Induced Bijections}
\label{CpG}

Each $ g \in G \subseteq \mathrm{Aut}_{\Phi}(X) \subseteq \mathrm{Iso}(X)$ induces a self-map $ \hat{g} : \hat{X} \to \hat{X} $ on the metric completion $\hat{X}$ by the following association: \[ \hat{g}(\hat{x}) := \lim_{n \to \infty} g(x_n), \] where $(x_n)_{n \in \mathbb{N}}$ is a sequence in $X$ converging to $\hat{x} \in \hat{X}$. The sequence $(g(x_n))_{n \in \mathbb{N}}$ is a Cauchy sequence since $g$ is an isometry by Proposition \ref{PhiOpIso}. Also, if $(y_n)_{n \in \mathbb{N}}$ is another sequence in $X$ converging to $\hat{x}$, then as $g$ is an isometry, we have \[ 0 = \lim_{n \to \infty} D_X(x_n, y_n) = \lim_{n \to \infty} D_X(g(x_n), g(y_n)), \] whence \[g(y_n) \rightarrow \lim_{n \to \infty} g(x_n)\] as well. So, the function $\hat{x} \mapsto \lim_{n \to \infty} g(x_n)$ is well defined. 

Moreover, note that $\hat{g} \vert_X = g$.

\begin{proposition}\label{IndBjInv}
The map $\hat{g} : \hat{X} \to \hat{X}$ is bijective for every $g \in G$, and  
$\hat{g}^{-1}=\widehat{g^{-1}}$.
\end{proposition}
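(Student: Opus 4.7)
The plan is to establish both claims simultaneously by proving the two composition identities
\[
\hat{g}\circ \widehat{g^{-1}}=\mathrm{id}_{\hat{X}}\qquad\text{and}\qquad \widehat{g^{-1}}\circ \hat{g}=\mathrm{id}_{\hat{X}}.
\]
Since $G$ is a group, $g^{-1}\in G$ and, by Proposition \ref{PhiOpIso}, it is an isometry of $(X,D_X)$; this guarantees, by the same reasoning given in the paragraph preceding the proposition, that the extension $\widehat{g^{-1}}\colon \hat{X}\to\hat{X}$ is well defined. So the two identities above make sense and each of them, once proved, will deliver both the bijectivity of $\hat{g}$ and the explicit description of its inverse.

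To prove the first identity, I would fix $\hat{x}\in\hat{X}$ and an arbitrary sequence $(x_n)_{n\in\mathbb{N}}$ in $X$ converging to $\hat{x}$ with respect to $\hat{D}_{\hat{X}}$. By definition, $\widehat{g^{-1}}(\hat{x})=\lim_{n\to\infty}g^{-1}(x_n)=:\hat{y}$. The key observation is that the sequence $(g^{-1}(x_n))_{n\in\mathbb{N}}$ lies in $X$ and converges to $\hat{y}$, so it is a legitimate approximating sequence for $\hat{y}$ to be plugged into the definition of $\hat{g}$. Using well-definedness of $\hat{g}$ (the value of the limit is independent of the chosen approximating sequence), we obtain
\[
\hat{g}(\hat{y})=\lim_{n\to\infty}g\bigl(g^{-1}(x_n)\bigr)=\lim_{n\to\infty}x_n=\hat{x},
\]
which gives $\hat{g}\circ \widehat{g^{-1}}=\mathrm{id}_{\hat{X}}$. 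The symmetric argument, exchanging the roles of $g$ and $g^{-1}$, yields $\widehat{g^{-1}}\circ \hat{g}=\mathrm{id}_{\hat{X}}$.

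From these two identities one immediately concludes that $\hat{g}$ is a bijection of $\hat{X}$ with two-sided inverse $\widehat{g^{-1}}$, which is exactly the statement of the proposition. I do not foresee a genuine obstacle here: the only subtle point is the freedom to evaluate $\hat{g}$ on $\hat{y}$ by means of the specific approximating sequence $(g^{-1}(x_n))$, and this freedom is precisely what the well-definedness argument preceding the proposition secures, once $g$ and $g^{-1}$ are both known to be isometries by Proposition \ref{PhiOpIso}.
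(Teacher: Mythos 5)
Your proof is correct, but it is organized differently from the paper's. The paper splits the statement into three steps: injectivity of $\hat{g}$ (shown by checking that $\hat{g}$ preserves the distance $\hat{D}_{\hat{X}}$, so distinct points have distinct images because $\hat{D}_{\hat{X}}$ is a metric), surjectivity (by pulling an approximating sequence of a target point $\hat{y}$ back through $g^{-1}$ and observing that the resulting Cauchy sequence converges to a preimage), and only then the identification $\hat{g}^{-1}=\widehat{g^{-1}}$. You instead prove the two composition identities $\hat{g}\circ\widehat{g^{-1}}=\mathrm{id}_{\hat{X}}$ and $\widehat{g^{-1}}\circ\hat{g}=\mathrm{id}_{\hat{X}}$ directly, which delivers bijectivity and the inverse formula in a single stroke; the key computation you use (evaluating $\hat{g}$ at $\widehat{g^{-1}}(\hat{x})$ along the approximating sequence $(g^{-1}(x_n))$, which is legitimate by the well-definedness established before the proposition) is essentially the paper's surjectivity step in disguise. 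Your route is slightly more economical: it makes the separate injectivity argument unnecessary, and in particular it does not use that $\hat{D}_{\hat{X}}$ separates points, whereas the paper's injectivity step does. What the paper's version buys in exchange is the explicit observation, en route, that $\hat{g}$ preserves $\hat{D}_{\hat{X}}$; in your version that fact would still need to be recovered later (as the paper in any case does via Proposition \ref{PhiOpIso} in Corollary \ref{IndBjInvPhiOp}). No gap to report.
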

\begin{proof}
Let $\hat{x}_1, \hat{x}_2 \in \hat{X}$ with $\hat{x}_1 \neq \hat{x}_2$, and $(x_{1, n})_{n \in \mathbb{N}}$ and $(x_{2, n})_{n \in \mathbb{N}}$ be sequences in $X$ converging respectively to $\hat{x}_1$ and $ \hat{x}_2$. As $g$ is an isometry, 
\begin{align*}
    0 \neq \hat{D}_{\hat{X}}(\hat{x}_1, \hat{x}_2)
    & := \lim_{n \to \infty} D_X(x_{1,n}, x_{2,n}) \\
    & = \lim_{n \to \infty} D_X(g(x_{1,n}), g(x_{2,n})) \\
    & =: \hat{D}_{\hat{X}}(\hat{g}(\hat{x}_1), \hat{g}(\hat{x}_2)),
\end{align*}
whence $\hat{g}(\hat{x}_1) \neq \hat{g}(\hat{x}_2)$ and $\hat{g}$ is injective.

As for surjectivity, let $g \in G$. If $\hat{y}\in\hat{X}$, then there is a sequence $(y_n)_{n \in \mathbb{N}}$ in $X$ such that $y_n \rightarrow \hat y$ in $\hat X$.
As $g^{-1}$ exists, we can put $x_n := g^{-1}(y_n)$, for each $ n \in \mathbb{N}$. 
Since $g^{-1}$ is an isometry 
and $(y_{n})_{n \in \mathbb{N}}$ is a Cauchy sequence, $(x_{n})_{n \in \mathbb{N}}$ too is a Cauchy sequence; so it converges to some $\hat{x} \in \hat{X}$. Of course, $\hat g(\hat x):=\lim_{n \to \infty} g(x_n) = \lim_{n \to \infty} y_n =\hat y$, and hence $\hat g$ is surjective.

Also, the equality $\hat g(\hat x) = \hat y$ just proved can be rewritten as \[ \hat{g}^{-1}(\hat{y}) = \hat{x}. \] But at the same time, as $g^{-1} \in G$, by definition we have \[ \widehat{g^{-1}}(\hat{y}) = \widehat{g^{-1}}(\lim_{n \to \infty} y_n) = \lim_{n \to \infty} g^{-1}(y_n) = \lim_{n \to \infty} x_n = \hat{x}. \] By the arbitrariness of $\hat{y} \in \hat{X}$, we get \[ \hat{g}^{-1}=\widehat{g^{-1}}. \]
\end{proof}

The following important property will be used frequently in the sequel.

\begin{proposition}\label{IndBjCmt}
For each $\varphi \in \Phi$ and each $g \in G$, \[ \hat{\varphi} \hat{g} = \widehat{\varphi g}. \]
\end{proposition}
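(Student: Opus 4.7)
The plan is to unwind the definitions of $\hat{\varphi}$, $\hat{g}$, and $\widehat{\varphi g}$ at an arbitrary point of $\hat{X}$ and show that both sides of the claimed identity produce the same limit. The key observation, which removes the need for any nontrivial argument, is that the sequence witnessing the limit defining $\hat{g}(\hat{x})$ lies entirely in $X$, so it is an admissible sequence for evaluating $\hat{\varphi}$ at $\hat{g}(\hat{x})$.

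Concretely, fix $\varphi \in \Phi$, $g \in G$, and $\hat{x} \in \hat{X}$, and pick a sequence $(x_n)_{n \in \mathbb{N}}$ in $X$ with $x_n \to \hat{x}$ in $\hat{X}$. First, since $\varphi g \in \Phi$ (because $g$ is a $\Phi$-operation), Corollary~\ref{ExtNonExp} applied to $\varphi g$ yields
\[
\widehat{\varphi g}(\hat{x}) = \lim_{n \to \infty} (\varphi g)(x_n) = \lim_{n \to \infty} \varphi(g(x_n)).
\]
Second, by the definition of $\hat{g}$, the sequence $(g(x_n))_{n \in \mathbb{N}}$ lies in $X$ and converges in $\hat{X}$ to $\hat{g}(\hat{x})$. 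Applying Corollary~\ref{ExtNonExp} to $\varphi$, using this particular approximating sequence for $\hat{g}(\hat{x})$, gives
\[
\hat{\varphi}(\hat{g}(\hat{x})) = \lim_{n \to \infty} \varphi(g(x_n)).
\]
Comparing the two displays establishes $\hat{\varphi}\hat{g}(\hat{x}) = \widehat{\varphi g}(\hat{x})$, and since $\hat{x} \in \hat{X}$ was arbitrary, the identity $\hat{\varphi}\hat{g} = \widehat{\varphi g}$ follows.

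There is no serious obstacle: the whole argument rests on the fact that $\hat{g}$ is defined through a sequence drawn from $X$ rather than from $\hat{X}$, so the same sequence can be fed into the Corollary~\ref{ExtNonExp} formula for $\hat{\varphi}$. The only point that deserves explicit mention is that $\widehat{\varphi g}$ is a legitimate object, which requires $\varphi g \in \Phi$; this is precisely the defining property of a $\Phi$-operation, guaranteed for every $g \in G \subseteq \mathrm{Aut}_{\Phi}(X)$.
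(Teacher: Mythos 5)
Your proof is correct and follows essentially the same route as the paper's: both evaluate each side at an arbitrary $\hat{x}$ along an approximating sequence $(x_n)$ in $X$, use $\varphi g \in \Phi$, and exploit that $(g(x_n))$ is itself a valid approximating sequence for $\hat{g}(\hat{x})$ when computing $\hat{\varphi}$. Your version merely makes that last point more explicit than the paper does.
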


\begin{proof}
As $g \in \mathrm{Aut}_{\Phi}(X)$, $\varphi g \in \Phi$. So, if $\hat{x} \in \hat{X}$ and $(x_n)_{n \in \mathbb{N}}$ is a sequence in $X$ converging to $\hat{x}$, we compute:
\begin{align*}
    \hat{\varphi}\hat{g}(\hat{x}) & = \hat{\varphi}(\hat{g}(\hat{x})) \\
    & = \hat{\varphi}(\lim_{n \to \infty} g(x_n)) \\
    & = \lim_{n \to \infty} \varphi(g(x_n)) \\
    & = \lim_{n \to \infty} \varphi g(x_n) \\
    & = \widehat{\varphi g}(\lim_{n \to \infty} x_n) \\
    & = \widehat{\varphi g}(\hat{x}).
\end{align*}
By the arbitrariness of $\hat{x}$, we have the proposed equality.
\end{proof}

\begin{corollary}\label{IndBjInvPhiOp}
For each $g \in G$, $ \hat{g} \in \mathrm{Aut}_{\hat{\Phi}}(\hat{X}) \subseteq \mathrm{Iso}(\hat{X})$.
\end{corollary}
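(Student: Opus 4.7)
The plan is to verify directly that $\hat g$ satisfies the defining properties of an invertible $\hat{\Phi}$-operation, then invoke Proposition \ref{PhiOpIso} (applied to $\hat{\Phi}$) to obtain the inclusion into $\mathrm{Iso}(\hat{X})$. This is really an assembly of what the preceding propositions have already given us.

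First, I would note that bijectivity of $\hat g$ is already established in Proposition \ref{IndBjInv}, which in addition yields the explicit inverse $\hat g^{-1}=\widehat{g^{-1}}$. Next I would show the $\hat{\Phi}$-operation property: for every $\hat\varphi\in\hat\Phi$, where $\hat\varphi$ is the unique non-expansive extension of some $\varphi\in\Phi$, Proposition \ref{IndBjCmt} gives
\[
\hat\varphi\circ\hat g=\widehat{\varphi g}.
\]
Since $g\in\mathrm{Aut}_{\Phi}(X)$, we have $\varphi g\in\Phi$, and hence $\widehat{\varphi g}\in\hat\Phi$ by the very definition of $\hat\Phi$. Applying the same argument to $g^{-1}\in G$ in place of $g$, and using $\hat g^{-1}=\widehat{g^{-1}}$, one obtains $\hat\varphi\circ\hat g^{-1}=\widehat{\varphi g^{-1}}\in\hat\Phi$ as well. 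Therefore $\hat g\in\mathrm{Aut}_{\hat\Phi}(\hat X)$.

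Finally, to conclude $\hat g\in\mathrm{Iso}(\hat X)$, I would apply Proposition \ref{PhiOpIso} to the perception pair $(\hat\Phi,\hat G)$ on $\hat X$: every invertible $\hat\Phi$-operation is automatically an isometry of $(\hat X,D_{\hat X})$, and by Proposition \ref{PsMetEq} this coincides with $(\hat X,\hat D_{\hat X})$. Alternatively, one can read isometry directly off the distance computation inside the proof of Proposition \ref{IndBjInv}, which already showed
\[
\hat D_{\hat X}(\hat g(\hat x_1),\hat g(\hat x_2))=\hat D_{\hat X}(\hat x_1,\hat x_2).
\]

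I do not anticipate a real obstacle here: the whole corollary is a bookkeeping step extracting from Propositions \ref{IndBjInv} and \ref{IndBjCmt} exactly the two closure properties $\hat\varphi\hat g\in\hat\Phi$ and $\hat\varphi\hat g^{-1}\in\hat\Phi$, together with bijectivity. The only subtlety worth stating explicitly is that the identification $\hat g^{-1}=\widehat{g^{-1}}$ is essential for handling the second of these two closure properties; without it, one would only know that $\hat g$ is a bijective $\hat\Phi$-operation, not that its inverse is one too.
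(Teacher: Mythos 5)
Your proof is correct and follows essentially the same route as the paper: bijectivity and the identity $\hat g^{-1}=\widehat{g^{-1}}$ from Proposition \ref{IndBjInv}, the closure properties $\hat\varphi\hat g=\widehat{\varphi g}\in\hat\Phi$ and $\hat\varphi\hat g^{-1}=\widehat{\varphi g^{-1}}\in\hat\Phi$ from Proposition \ref{IndBjCmt}, and the final inclusion from Proposition \ref{PhiOpIso}. The remark that the identification of $\hat g^{-1}$ with $\widehat{g^{-1}}$ is the essential ingredient for the second closure property matches the paper's emphasis exactly.
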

\begin{proof}
Let $\hat{\varphi} \in \hat{\Phi}$. 
As $g \in G \subseteq \mathrm{Aut}_{\Phi}(X)$, $\varphi g \in \Phi$; so, $\hat{\varphi} \hat{g} = \widehat{\varphi g} \in \hat{\Phi}$ by Proposition \ref{IndBjCmt}; whence $\hat{g}$ is a $\hat{\Phi}-$operation.
Since $\hat{g}^{-1} = \widehat{g^{-1}}$ (Proposition \ref{IndBjInv}), by applying Proposition \ref{IndBjCmt} again to $g^{-1}\in G$, we infer that $\hat{g}^{-1}$ is a $\hat{\Phi}-$operation too; whence $\hat{g} \in \mathrm{Aut}_{\hat{\Phi}}(\hat{X})$. 

The inclusion $\mathrm{Aut}_{\hat{\Phi}}(\hat{X}) \subseteq \mathrm{Iso}(\hat{X})$ is stated in Proposition \ref{PhiOpIso}.
\end{proof}

\begin{proposition}\label{IndBjFnCompCmt}
For each $g, h \in G$, \[ \hat{g}\hat{h} = \widehat{gh}. \]
\end{proposition}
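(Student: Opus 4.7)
The plan is to show that both maps agree pointwise on $\hat{X}$ by unwinding the definitions in terms of a single approximating sequence. Fix an arbitrary $\hat{x}\in\hat{X}$ and pick a sequence $(x_n)_{n\in\mathbb{N}}$ in $X$ with $x_n\to\hat{x}$. The task is to verify that $\widehat{gh}(\hat{x})$ and $\hat{g}\hat{h}(\hat{x})$ are both equal to $\lim_{n\to\infty} g(h(x_n))$; since $\hat{x}$ is arbitrary, the proposition will follow.

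For the left-hand side, since $G$ is a group we have $gh\in G$, so the defining formula of the construction in Subsubsection \ref{CpG} applies directly to the element $gh$ and gives
\[
\widehat{gh}(\hat{x}) \;=\; \lim_{n\to\infty}(gh)(x_n) \;=\; \lim_{n\to\infty} g\bigl(h(x_n)\bigr).
\]
For the right-hand side, I would first apply the definition of $\hat{h}$ to get $\hat{h}(\hat{x})=\lim_{n\to\infty}h(x_n)$. The crucial observation is that $(h(x_n))_{n\in\mathbb{N}}$ is itself a sequence lying in $X$ (because $h\colon X\to X$) which converges in $\hat{X}$ to the point $\hat{h}(\hat{x})$. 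Consequently the definition of $\hat{g}$ may be applied using precisely this approximating sequence, yielding
\[
\hat{g}\bigl(\hat{h}(\hat{x})\bigr) \;=\; \lim_{n\to\infty} g\bigl(h(x_n)\bigr),
\]
which matches the expression obtained for $\widehat{gh}(\hat{x})$.

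I do not foresee a serious obstacle here; the only point that requires a moment of care is the well-definedness issue mentioned above, namely that one must pick, for the evaluation of $\hat{g}$ at $\hat{h}(\hat{x})$, a sequence in $X$ (not merely in $\hat{X}$) converging to $\hat{h}(\hat{x})$. The sequence $(h(x_n))$ is the natural choice and it does lie in $X$, which is exactly what makes the two computations collapse to the same limit. The fact that this limit exists has already been established in Subsubsection \ref{CpG} via the isometry property of $g$ and $h$ (Proposition \ref{PhiOpIso}), so no additional convergence argument is needed.
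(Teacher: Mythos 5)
Your proof is correct and follows essentially the same route as the paper's: evaluate both sides on an arbitrary $\hat{x}$ via an approximating sequence $(x_n)$ in $X$, and use that $(h(x_n))$ is itself a sequence in $X$ converging to $\hat{h}(\hat{x})$ so that the definition of $\hat{g}$ applies. Your explicit remark on that last point is the one step the paper's computation leaves implicit, and it is handled correctly.
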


\begin{proof}
Let $\hat{x} \in \hat{X}$, and $(x_n)_{n \in \mathbb{N}}$ be a sequence in $X$ converging to $\hat{x}$ in $\hat X$. Then, by recalling the definitions of $\hat h$ and $\hat g$, we have
\begin{align*}
    \hat{g}\hat{h}(\hat{x}) & = \hat{g}(\hat{h}(\hat{x})) \\
    & = \hat{g}(\lim_{n \to \infty} h(x_n) ) \\
    & = \lim_{n \to \infty} g(h(x_n)) \\
    & = \lim_{n \to \infty} gh(x_n) \\
    & = \widehat{gh}(\hat{x}),
\end{align*}
whence by the arbitrariness of $\hat{x}$, the proposition is proved.
\end{proof}

Let us put \[ \hat G:=\{\hat g : \hat{X} \to \hat{X} \mid g\in G\} \].

\begin{remark}
Clearly, $\widehat{\mathrm{id}_X} = \mathrm{id}_{\hat{X}}\in \hat{G}$.
\end{remark}

\begin{corollary}\label{IndBjGp}
The set $\hat{G}$ is a subgroup of $\mathrm{Aut}_{\hat{\Phi}}(\hat{X})$.
\end{corollary}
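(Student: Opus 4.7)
The plan is to assemble the subgroup verification directly from the preceding propositions, since each of the three standard subgroup conditions has already been essentially established.

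First I would observe that $\hat{G}$ is non-empty: the remark immediately before the corollary records that $\widehat{\mathrm{id}_X} = \mathrm{id}_{\hat{X}} \in \hat{G}$, which uses that $\mathrm{id}_X \in G$ (as $G$ is itself a group). The inclusion $\hat{G} \subseteq \mathrm{Aut}_{\hat{\Phi}}(\hat{X})$ is exactly the content of Corollary \ref{IndBjInvPhiOp}, so the only remaining task is to check closure under the group operations inherited from $\mathrm{Aut}_{\hat{\Phi}}(\hat{X})$.

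Next, for closure under composition, I would take arbitrary elements $\hat{g}, \hat{h} \in \hat{G}$ with $g, h \in G$, and apply Proposition \ref{IndBjFnCompCmt} to obtain $\hat{g}\hat{h} = \widehat{gh}$. Since $G$ is a group, $gh \in G$, and therefore $\widehat{gh} \in \hat{G}$ by the very definition of $\hat{G}$.

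Finally, for closure under inverses, I would take $\hat{g} \in \hat{G}$ with $g \in G$. Proposition \ref{IndBjInv} gives $\hat{g}^{-1} = \widehat{g^{-1}}$, and since $g^{-1} \in G$, we get $\hat{g}^{-1} \in \hat{G}$. Combining these three facts proves the corollary. There is no real obstacle here: the work was done in Propositions \ref{IndBjInv} and \ref{IndBjFnCompCmt}, which verified compatibility of the hat-construction with composition and inversion, so the subgroup property reduces to a one-line invocation of each.
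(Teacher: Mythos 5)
Your proof is correct and follows essentially the same route as the paper: both reduce the claim to closure under composition via Proposition \ref{IndBjFnCompCmt} and closure under inversion via Proposition \ref{IndBjInv}. You are in fact slightly more complete than the paper's proof, since you also explicitly record non-emptiness and the inclusion $\hat{G} \subseteq \mathrm{Aut}_{\hat{\Phi}}(\hat{X})$ from Corollary \ref{IndBjInvPhiOp}, which the paper leaves to the surrounding remarks.
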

\begin{proof}
It will suffice to show that $\hat{G}$ is closed under composition and computation of the inverse. The first property follows from Proposition \ref{IndBjFnCompCmt}, since if $\hat g, \hat h \in \hat G$, then $\hat{g}\hat{h} = \widehat{gh} \in \hat{G}$. The second property follows from Proposition
\ref{IndBjInv}, since if $\hat g\in \hat G$, then $\hat{g}^{-1}=\widehat{g^{-1}}\in \hat G$.
\end{proof}

\begin{remark}
Corollary \ref{IndBjGp} implicitly states that $(\hat{\Phi}, \hat{G})$ is a perception pair.
\end{remark}

Note that $\mathrm{Aut}_{\hat{\Phi}}(\hat{X})$, and therefore $\hat{G} \subseteq \mathrm{Aut}_{\hat{\Phi}}(\hat{X})$, are pseudo-metric spaces with the pseudo-metric $\hat{D}_{\mathrm{Aut}} : \mathrm{Aut}_{\hat{\Phi}}(\hat{X}) \times \mathrm{Aut}_{\hat{\Phi}}(\hat{X}) \to \mathbb{R}$ given by \[ \hat{D}_{\mathrm{Aut}}(\hat{g}, \hat{h}) := \sup_{\hat{\varphi} \in \hat{\Phi}} D_{\hat{\Phi}}(\hat{\varphi}\hat{g}, \hat{\varphi}\hat{h}), \ \mathrm{for \ every} \ \hat{g}, \hat{h} \in \mathrm{Aut}_{\hat{\Phi}}(\hat{X}). \] 
Moreover,
\begin{align*}
    \hat{D}_{\mathrm{Aut}}(\hat{g}, \hat{h}) & := \sup_{\hat{\varphi} \in \hat{\Phi}} D_{\hat{\Phi}}(\hat{\varphi}\hat{g}, \hat{\varphi}\hat{h}) \\
   & = \sup_{\hat{\varphi} \in \hat{\Phi}} \sup_{\hat{x} \in \hat{X}} \left \vert \hat{\varphi}(\hat{g}(\hat{x})) - \hat{\varphi}(\hat{h}(\hat{x})) \right \vert \\
   & = \sup_{\hat{x} \in \hat{X}} \sup_{\hat{\varphi} \in \hat{\Phi}} \left \vert \hat{\varphi}(\hat{g}(\hat{x})) - \hat{\varphi}(\hat{h}(\hat{x})) \right \vert \\
   & = \sup_{\hat{x} \in \hat{X}} D_{\hat{X}}(\hat{g}(\hat{x}), \hat{h}(\hat{x})).
\end{align*}

Therefore, if $\hat{g}, \hat{h}\in\mathrm{Aut}_{\hat{\Phi}}(\hat{X})$ and $\hat{D}_{\mathrm{Aut}}(\hat{g}, \hat{h})=0$, then $D_{\hat{X}}(\hat{g}(\hat{x}), \hat{h}(\hat{x}))=0$ for every $\hat x\in \hat X$.
Since $\hat{\Phi}$ endows $\hat{X}$ with the metric structure induced by the coinciding metrics $D_{\hat X}$ and $\hat D_{\hat X}$ (Proposition \ref{PsMetEq}), $\hat{g}(\hat{x})=\hat{h}(\hat{x})$ for every $\hat x\in \hat X$, and hence $\hat{g}=\hat{h}$. It follows that $\hat{G}$, and therefore $\hat{G} \subseteq \mathrm{Aut}_{\hat{\Phi}}(\hat{X})$, are metric spaces.

\begin{proposition}\label{kiso}
The correspondence $k : G \to \hat{G}$ given by $k(g) := \hat{g}$ is an isometry.
\end{proposition}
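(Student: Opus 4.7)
The plan is to unpack the formulas for $D_{\mathrm{Aut}}$ on $G$ and $\hat{D}_{\mathrm{Aut}}$ on $\hat{G}$, already derived in the excerpt, and then reduce the equality of the two suprema to Proposition~\ref{SpDns} via density and compactness. Surjectivity of $k$ is immediate from the definition $\hat{G}=\{\hat{g}\mid g\in G\}$, so the whole content is the distance-preserving property
\[
D_{\mathrm{Aut}}(g,h)\;=\;\hat{D}_{\mathrm{Aut}}(\hat{g},\hat{h})\qquad \text{for every }g,h\in G.
\]

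First I would recall the two reformulations already established in the paper: $D_{\mathrm{Aut}}(g,h)=\sup_{x\in X}D_X(g(x),h(x))$ and $\hat{D}_{\mathrm{Aut}}(\hat{g},\hat{h})=\sup_{\hat{x}\in\hat{X}}D_{\hat{X}}(\hat{g}(\hat{x}),\hat{h}(\hat{x}))$. Because $\hat{g}\vert_X=g$ and $\hat{h}\vert_X=h$, and because Proposition~\ref{PsMetEq} gives $D_{\hat{X}}=\hat{D}_{\hat{X}}$ on $\hat{X}$ (so in particular $D_{\hat{X}}(g(x),h(x))=D_X(g(x),h(x))$ for $x\in X$ by construction of the completion), the right-hand supremum taken over $X\subseteq\hat{X}$ agrees with the left-hand one. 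What remains is to show that restricting the supremum from $\hat{X}$ to the dense subset $X$ does not decrease its value.

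For that, I would invoke Proposition~\ref{SpDns} with $K=\hat{X}$, which is compact because $X$ is totally bounded by Theorem~\ref{PhiBddXBdd} and $\hat{X}$ is its completion, and with $A=X$, which is dense in $\hat{X}$ by definition of the metric completion. The function $f\colon\hat{X}\to\mathbb{R}$ given by $f(\hat{x}):=D_{\hat{X}}(\hat{g}(\hat{x}),\hat{h}(\hat{x}))$ is continuous: indeed $\hat{g},\hat{h}\in\mathrm{Iso}(\hat{X})$ by Corollary~\ref{IndBjInvPhiOp}, hence are $1$-Lipschitz, and $D_{\hat{X}}$ is itself continuous on $\hat{X}\times\hat{X}$. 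Thus Proposition~\ref{SpDns} yields
\[
\sup_{\hat{x}\in\hat{X}}D_{\hat{X}}(\hat{g}(\hat{x}),\hat{h}(\hat{x}))\;=\;\sup_{x\in X}D_{\hat{X}}(\hat{g}(x),\hat{h}(x))\;=\;\sup_{x\in X}D_X(g(x),h(x)),
\]
giving the desired equality of the two pseudo-distances and completing the proof.

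I do not anticipate a real obstacle here; the argument is essentially bookkeeping built on pieces already assembled, namely Proposition~\ref{PsMetEq} (which identifies the two metrics on $\hat{X}$), Theorem~\ref{PhiBddXBdd} (which gives compactness of $\hat{X}$), and Proposition~\ref{SpDns} (which lets one pass from a dense subset to the whole space). The only thing to be slightly careful about is the continuity of $f$, but since $\hat{g},\hat{h}$ are isometries of $\hat{X}$, this is immediate.
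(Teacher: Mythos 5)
Your proposal is correct and follows essentially the same route as the paper's own proof: both rewrite the two pseudo-distances as suprema of $D_X(g(x),h(x))$ over $X$ and of $D_{\hat X}(\hat g(\hat x),\hat h(\hat x))$ over $\hat X$, note that $\hat x\mapsto D_{\hat X}(\hat g(\hat x),\hat h(\hat x))$ is continuous because $\hat g,\hat h$ are isometries (Corollary \ref{IndBjInvPhiOp}), and then pass from the dense subset $X$ to the compact space $\hat X$ via Proposition \ref{SpDns}. The only cosmetic difference is that the paper also records injectivity of $k$ explicitly, which in your version is subsumed by distance preservation since $D_{\mathrm{Aut}}$ is a metric on $G$ here.
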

\begin{proof}
The map $k$ is injective: If $g, h \in G$ differ at some $ x \in X $, $\hat{g}(x) \neq \hat{h}(x)$ as well, and $k(g) = \hat{g} \neq \hat{h} = k(h)$. Also, the definition of $\hat G$ immediately implies that $k$ is surjective.

We show that $k$ preserves distances. By Corollary \ref{IndBjInvPhiOp}, the real-valued function $f: \hat{X} \to \mathbb{R}$ defined by setting \[ f(\hat{x}) := D_{\hat{X}}(\hat{g}(\hat{x}), \hat{h}(\hat{x})), \ \mathrm{for \ every} \ \hat{x} \in \hat{X}, \] is continuous for every $\hat g$ and $\hat h$ in $\hat G$, since each isometry is by definition a continuous map.

Let $\hat{g}, \hat{h} \in \hat{G}$; then by Propositions \ref{SpDns}, we have
\begin{align*}
    \hat{D}_{\mathrm{Aut}}(\hat{g}, \hat{h}) 
    & = \sup_{\hat{x} \in \hat{X}} D_{\hat{X}}(\hat{g}(\hat{x}), \hat{h}(\hat{x})) \\
    & = \sup_{x \in X} D_{\hat{X}}(\hat{g}(x), \hat{h}(x)) \\
    & = \sup_{x \in X} D_X( g(x), h(x)) \\
    & =: D_{\mathrm{Aut}}(g, h),
\end{align*}
as required.
\end{proof}

\begin{corollary}
If $G$ is complete, then $\hat{G}$ is compact.
\end{corollary}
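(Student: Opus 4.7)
The plan is to combine the total boundedness already available for $G$ (and hence for $\hat G$ via the isometry $k$) with the hypothesized completeness of $G$ (transferred to $\hat G$ in the same way), and invoke the classical characterization of compactness in metric spaces as the conjunction of completeness and total boundedness.

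First I would observe that, under the standing assumption of this section, $\Phi$ is totally bounded. Hence, by Theorem \ref{PhiBddGBdd}, the metric space $(G, D_{\mathrm{Aut}})$ is totally bounded. Since by Proposition \ref{kiso} the map $k : G \to \hat G$ is a surjective isometry (with respect to $D_{\mathrm{Aut}}$ on the domain and $\hat D_{\mathrm{Aut}}$ on the codomain), total boundedness is preserved: for every $\varepsilon > 0$, a finite $\varepsilon$-net $\{g_1, \dots, g_n\} \subseteq G$ of $G$ is mapped by $k$ to a finite $\varepsilon$-net $\{\hat g_1, \dots, \hat g_n\} \subseteq \hat G$ of $\hat G$. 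Therefore $(\hat G, \hat D_{\mathrm{Aut}})$ is totally bounded.

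Next I would use the hypothesis that $G$ is complete: again because $k$ is an isometry, completeness transfers to its image. Indeed, if $(\hat g_n)_{n \in \mathbb{N}}$ is a Cauchy sequence in $\hat G$, then its preimages $g_n := k^{-1}(\hat g_n)$ form a Cauchy sequence in $G$ by the isometry property, which converges to some $g \in G$ by completeness; applying the isometry $k$ yields $\hat g_n \to \hat g = k(g) \in \hat G$. So $(\hat G, \hat D_{\mathrm{Aut}})$ is complete.

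Finally, a metric space that is both complete and totally bounded is compact; applying this to $(\hat G, \hat D_{\mathrm{Aut}})$ gives the claim. There is no real obstacle here: the entire argument rests on the well-known fact that an isometry transports both total boundedness and completeness between its domain and image, and both ingredients are available—the first from Theorem \ref{PhiBddGBdd} combined with Proposition \ref{kiso}, the second from the hypothesis of the corollary combined again with Proposition \ref{kiso}.
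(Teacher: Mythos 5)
Your proposal is correct and follows essentially the same route as the paper: both arguments rest on Theorem \ref{PhiBddGBdd} for total boundedness, the hypothesis for completeness, the characterization of compactness as completeness plus total boundedness, and the isometry $k$ of Proposition \ref{kiso}. The only (immaterial) difference is that the paper first concludes that $G$ itself is compact and then transports compactness along $k$, whereas you transport total boundedness and completeness separately to $\hat G$ before concluding.
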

\begin{proof}
Recall that the space $\Phi$ of admissible signals was assumed to be totally bounded; whence by Theorem \ref{PhiBddGBdd}, $G$ is totally bounded, and being complete by hypothesis, it is compact. As $k : G \to \hat{G}$ is an isometry, $\hat{G}$ is compact as well.
\end{proof}

\begin{remark}\label{remGnotcomplete}
The assumption here that $G$ is complete cannot be removed. It is easy to give an example of a perception pair $(\Phi, G)$ where $\Phi$ is compact but $G$ is not complete \cite{MaIntPa}.
For example, if $\Phi$ is the compact space of all $1$-Lipschitz functions from $X=S^1=\{(x,y)\in\mathbb{R}^2:x^2+y^2=1\}$ to $[0,1]$, and $G$ is the group of all rotations $\rho_{2\pi q}$ of $X$ of $2\pi q$ radians with $q$ a rational number, then the topological group $G$ is not complete. Moreover, in this case $\hat X=X$, and the topological group $\hat G=G$ is not compact either.
\end{remark}


It is easy to see that the embeddings $j: X \to \hat{X}$ and $k : G \to \hat{G}$ satisfy the following natural commutativity condition.

\begin{proposition}\label{ComBj}
For each $g \in G$, \[ k(g) \circ j = j \circ g. \] That is, for each $g \in G$ and each $x \in X$, \[ \hat{g}(x)=\widehat{g(x)}. \]
\end{proposition}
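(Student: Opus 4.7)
The plan is to unwind the statement to its essentially tautological content and then invoke the definition of $\hat g$ from Subsubsection \ref{CpG} together with the remark there that $\hat g|_X = g$. Since $j : X \to \hat X$ is the inclusion and $k(g) = \hat g$, the commutativity claim $k(g) \circ j = j \circ g$ says exactly that for every $x \in X$ the elements $\hat g(x)$ and $g(x)$ coincide in $\hat X$; and once we identify $g(x) \in X \subseteq \hat X$ with its image in $\hat X$, this is precisely the restated identity $\hat g(x) = \widehat{g(x)}$ in the second half of the statement (where the hat on $g(x)$ simply denotes its canonical image in the completion).

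The single computational step is to apply the defining formula $\hat g(\hat x) := \lim_{n \to \infty} g(x_n)$ to the element $x \in X \subseteq \hat X$, using the constant sequence $x_n := x$. This sequence lies in $X$ and converges to $x$ in $\hat X$ (since $j$ is isometric and hence continuous), so the definition yields
\[
\hat g(x) \;=\; \lim_{n \to \infty} g(x_n) \;=\; \lim_{n \to \infty} g(x) \;=\; g(x),
\]
which is the required equality. Equivalently, in terms of $j$, we have $\hat g(j(x)) = g(x) = j(g(x))$, hence $k(g) \circ j = j \circ g$ by the arbitrariness of $x$.

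There is really no obstacle here: the well-definedness of $\hat g$ on sequences (independence of the chosen Cauchy representative) was already established in Subsubsection \ref{CpG}, so the constant sequence is a legitimate choice, and the conclusion is immediate. The only thing worth flagging for the reader is the mild abuse of notation whereby, in the formula $\hat g(x) = \widehat{g(x)}$, the hat on the right-hand side denotes the inclusion $j : X \hookrightarrow \hat X$ rather than the extension operation applied to a signal; clarifying this notational point in a single sentence before the computation will make the proof completely transparent.
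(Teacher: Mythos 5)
Your proof is correct: the paper states this proposition without proof (``It is easy to see\dots''), and your verification via the constant sequence $x_n := x$ together with the already-recorded fact $\hat{g}\vert_X = g$ is exactly the intended immediate argument, including the correct reading of $\widehat{g(x)}$ as the image of $g(x)$ under the inclusion $j$ rather than a signal extension. Nothing is missing.
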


\begin{remark}
We observe that $\hat{g}$ is the only map in $\mathrm{Aut}_{\hat{\Phi}}(\hat{X})$ with $\hat{g} \vert_X = g$. It is indeed easy to show that for any
$\overline{g}\in\mathrm{Aut}_{\hat{\Phi}}(\hat{X})$
such that $\overline g\vert_X = g$, the equality
$\overline{g}(\hat x) = \hat{g}(\hat x)$ holds for every $\hat x\in\hat X$.
\end{remark}

Before proceeding, we stress that while $\hat X$, by definition, is a complete topological space, the topological spaces $\hat \Phi$ and $\hat G$, in general, are not complete.


\subsubsection{The Embedding of Perception Pairs}\label{subSectEPPT}

We are now ready to show that every perception pair $(\Phi, G)$, $\mathrm{dom}(\Phi) = X$ can be embedded in a compact perception pair $(\Phi^*, G^*)$, $\mathrm{dom}(\Phi^*) = X^*$, provided that the space $\Phi$ of signals is totally bounded and $(X, D_X)$ is a metric space.

We have so far obtained only an isometric image $\hat{G}$ of $G$. The group $G$ is chosen arbitrarily; so $G$ and $\hat{G}$ may or may not be closed in $\mathrm{Aut}_{\Phi}(X)$ and $\widehat{\mathrm{Aut}_{\Phi}(X)}$ respectively. Similarly, the space $\hat{\Phi}$, being isometric to $\Phi$, need not necessarily be compact.
However, the space $C(\hat{X}, \mathbb{R)}$ is complete; so, $\overline{\hat{\Phi}}$, the closure of $\hat{\Phi}$ in $C(\hat{X}, \mathbb{R)}$, being closed, is complete as well. Also, $\hat{\Phi}$, being isometric to the totally bounded space $\Phi$ (Theorem \ref{ExtSgIso}), is totally bounded, and so is its closure. Consequently, 
\begin{proposition}\label{ClCpPhComp}
The metric space $\overline{\hat{\Phi}} \subseteq \mathbb{R}^{\hat{X}}_b$ is compact.
\end{proposition}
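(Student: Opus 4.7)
The plan is to invoke the classical metric-space characterization of compactness: a metric space is compact if and only if it is complete and totally bounded. The two ingredients are in fact assembled in the paragraph immediately preceding the statement, so the task is mostly one of organization and citing the right previous results. I would state explicitly that $\overline{\hat{\Phi}}$ is regarded as a metric subspace of $(\mathbb{R}^{\hat{X}}_b, \| \cdot \|_\infty)$ via the closure in $C(\hat{X}, \mathbb{R})$; this requires the preliminary observation that, since $\hat{X}$ is compact, $C(\hat{X}, \mathbb{R}) \subseteq \mathbb{R}^{\hat{X}}_b$, and that the signals in $\hat{\Phi}$ are continuous (hence $\hat{\Phi} \subseteq C(\hat{X}, \mathbb{R})$) by Corollary \ref{ExtNonExp}.

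For the completeness half, I would appeal to the standard fact that $(C(\hat{X}, \mathbb{R}), \| \cdot \|_\infty)$ is a complete metric space (uniform limits of continuous functions are continuous, and $\mathbb{R}$ is complete). Since $\overline{\hat{\Phi}}$ is by construction closed in $C(\hat{X}, \mathbb{R})$, it inherits completeness. For the total boundedness half, I would recall that by hypothesis $\Phi$ is totally bounded, that Theorem \ref{ExtSgIso} provides an isometry $i : \Phi \to \hat{\Phi}$, and hence that $\hat{\Phi}$ is totally bounded (this is Corollary \ref{CpPhiTBdd}). Then I would use the elementary observation that in any metric space the closure of a totally bounded subset is totally bounded: a finite $\varepsilon/2$-net for $\hat{\Phi}$ automatically serves as a finite $\varepsilon$-net for $\overline{\hat{\Phi}}$, since every point of the closure lies within $\varepsilon/2$ of some point of $\hat{\Phi}$.

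Combining the two items through the equivalence compactness $\iff$ complete $+$ totally bounded yields the proposition. There is no genuine obstacle: the only mild subtlety worth being explicit about is the compatibility between the two ambient spaces $C(\hat{X}, \mathbb{R})$ and $\mathbb{R}^{\hat{X}}_b$, which is handled, as noted above, by the compactness of $\hat{X}$ established right after Proposition \ref{PhiBddXBdd}.
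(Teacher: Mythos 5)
Your proof is correct and follows essentially the same route as the paper: the paper establishes the proposition in the paragraph immediately preceding it by noting that $\overline{\hat{\Phi}}$, being closed in the complete space $C(\hat{X},\mathbb{R})$, is complete, and being the closure of the totally bounded set $\hat{\Phi}$ (isometric to $\Phi$ via Theorem \ref{ExtSgIso}), is totally bounded, hence compact. Your additional remarks on the compatibility of the ambient spaces $C(\hat{X},\mathbb{R})$ and $\mathbb{R}^{\hat{X}}_b$ are a harmless and reasonable elaboration.
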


The data set $\overline{\hat{\Phi}}$ endows $\hat{X}$ with a pseudo-metric structure where the pseudo-distance is given by \[ \overline{D}_{\hat{X}}(\hat{x}_1, \hat{x}_2) := \sup\limits_{\overline{\varphi} \in \overline{\hat{\Phi}}} \left \vert \overline{\varphi}(\hat{x}_1)-\overline{\varphi}(\hat{x}_2)\right \vert, \ \mathrm{for \ every} \ \hat{x}_1, \hat{x}_2 \in \hat{X}. \]

\begin{proposition}\label{DTild}
On $\hat{X}$, $\overline{D}_{\hat{X}} = D_{\hat{X}}$; so $\overline{D}_{\hat{X}}$ is a metric.
\end{proposition}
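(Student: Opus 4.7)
The plan is to prove the two inequalities $D_{\hat{X}} \leq \overline{D}_{\hat{X}}$ and $\overline{D}_{\hat{X}} \leq D_{\hat{X}}$ separately; the first is trivial, and the second uses a uniform approximation argument. The conclusion that $\overline{D}_{\hat{X}}$ is a metric then follows immediately, since $D_{\hat{X}} = \hat{D}_{\hat{X}}$ is already known to be a metric by Proposition \ref{PsMetEq}.

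For the easy direction, since $\hat{\Phi} \subseteq \overline{\hat{\Phi}}$, the supremum defining $\overline{D}_{\hat{X}}$ is taken over a larger set than the one defining $D_{\hat{X}}$, so $D_{\hat{X}}(\hat{x}_1, \hat{x}_2) \leq \overline{D}_{\hat{X}}(\hat{x}_1, \hat{x}_2)$ for all $\hat{x}_1, \hat{x}_2 \in \hat{X}$.

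For the reverse direction, I would fix $\hat{x}_1, \hat{x}_2 \in \hat{X}$ and an arbitrary $\overline{\varphi} \in \overline{\hat{\Phi}}$, and exploit the fact that the closure is taken in $C(\hat{X}, \mathbb{R})$ with respect to the uniform norm. By definition of closure there exists a sequence $(\hat{\varphi}_n)_{n\in\mathbb{N}}$ in $\hat{\Phi}$ with $\|\hat{\varphi}_n - \overline{\varphi}\|_\infty \to 0$; in particular, $\hat{\varphi}_n(\hat{x}_i) \to \overline{\varphi}(\hat{x}_i)$ pointwise for $i=1,2$. Therefore
\begin{equation*}
\bigl|\overline{\varphi}(\hat{x}_1) - \overline{\varphi}(\hat{x}_2)\bigr| = \lim_{n\to\infty}\bigl|\hat{\varphi}_n(\hat{x}_1) - \hat{\varphi}_n(\hat{x}_2)\bigr| \leq \sup_{\hat{\varphi}\in\hat{\Phi}}\bigl|\hat{\varphi}(\hat{x}_1) - \hat{\varphi}(\hat{x}_2)\bigr| = D_{\hat{X}}(\hat{x}_1,\hat{x}_2).
\end{equation*}
Taking the supremum over $\overline{\varphi} \in \overline{\hat{\Phi}}$ yields $\overline{D}_{\hat{X}}(\hat{x}_1,\hat{x}_2) \leq D_{\hat{X}}(\hat{x}_1,\hat{x}_2)$, which combined with the first inequality gives the desired equality on $\hat{X}$.

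There is essentially no obstacle: the argument is a one-line sup/closure interchange, and I do not anticipate any technical difficulty. The only thing to double-check is that the closure is taken in the correct space (namely $C(\hat{X}, \mathbb{R})$ equipped with the uniform norm, as stated right before Proposition \ref{ClCpPhComp}), so that uniform convergence indeed implies pointwise convergence at each fixed $\hat{x}_i$. Once $\overline{D}_{\hat{X}} = D_{\hat{X}}$, and since $D_{\hat{X}} = \hat{D}_{\hat{X}}$ is a bona fide metric on $\hat{X}$ (Proposition \ref{PsMetEq}), the pseudo-metric $\overline{D}_{\hat{X}}$ inherits the separation axiom and is therefore a metric, completing the proof.
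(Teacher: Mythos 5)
Your proof is correct and follows essentially the same route as the paper: the paper simply packages your two-inequality/density argument into a single appeal to Proposition \ref{SpDns}, applied to the continuous function $f(\overline{\varphi}) := \left\vert \overline{\varphi}(\hat{x}_1)-\overline{\varphi}(\hat{x}_2)\right\vert$ on $\overline{\hat{\Phi}}$ with dense subset $\hat{\Phi}$, and then concludes the metric property from Proposition \ref{PsMetEq} exactly as you do. The only (harmless) difference is that your explicit sequence argument does not even use the compactness of $\overline{\hat{\Phi}}$ that Proposition \ref{SpDns} nominally assumes.
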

\begin{proof}
Let $\hat{x}_1, \hat{x}_2 \in \hat{X}$. 
By applying Proposition \ref{SpDns} to the continuous function $f(\overline{\varphi}):=\left\vert\overline{\varphi}(\hat x_1)-\overline{\varphi}(\hat x_2)\right\vert$, we get:
\begin{align*}
    \overline{D}_{\hat{X}}(\hat{x}_1, \hat{x}_2) & := \sup\limits_{\overline{\varphi} \in \overline{\hat{\Phi}}} \left \vert \overline{\varphi}(\hat{x}_1)-\overline{\varphi}(\hat{x}_2)\right \vert \\
    & = \sup\limits_{\hat{\varphi} \in \hat{\Phi}} \left \vert \hat{\varphi}(\hat{x}_1)-\hat{\varphi}(\hat{x}_2)\right \vert \\
    & =: D_{\hat{X}}(\hat{x}_1, \hat{x}_2).
\end{align*}
As $\hat{x}_1, \hat{x}_2 \in \hat{X}$ are arbitrary, we have the proposed equality.
\end{proof}

In view of Proposition \ref{DTild}, the symbol $\mathrm{Iso}(\hat{X})$ can be used without any ambiguity about the underlying metric.

By Theorem \ref{AutTopGp}, the set $\mathrm{Aut}_{\overline{\hat{\Phi}}}(\hat{X}) \subseteq \mathrm{Iso}(\hat{X})$ of all invertible $\overline{\hat{\Phi}}$-operations is a topological group with respect to the topology induced by the pseudo-distance $\overline{\hat{D}}_{\mathrm{Aut}} : \mathrm{Aut}_{\overline{\hat{\Phi}}}(\hat{X}) \times \mathrm{Aut}_{\overline{\hat{\Phi}}}(\hat{X}) \to \mathbb{R}$: 
\[ \overline{\hat{D}}_{\mathrm{Aut}}(\overline{g}, \overline{h}) := \sup_{\overline{\varphi} \in \overline{\hat{\Phi}}} D_{\overline{\hat{\Phi}}}(\overline{\varphi} \circ \overline{g}, \overline{\varphi} \circ \overline{h}), \ \mathrm{for \ every } \ \overline{g}, \overline{h} \in \mathrm{Aut}_{\overline{\hat{\Phi}}}(\hat{X}). \]
If $\overline{g}, \overline{h} \in \mathrm{Aut}_{\overline{\hat{\Phi}}}(\hat{X})$, then
\begin{align*}
    \overline{\hat{D}}_{\mathrm{Aut}}(\overline{g}, \overline{h}) & = \sup_{\overline{\varphi} \in \overline{\hat{\Phi}}} D_{\overline{\hat{\Phi}}}(\overline{\varphi} \circ \overline{g}, \overline{\varphi} \circ \overline{h}) \\
   & = \sup_{\overline{\varphi} \in \overline{\hat{\Phi}}} \sup_{\hat{x} \in \hat{X}} \left \vert \overline{\varphi}(\overline{g}(\hat{x})) - \overline{\varphi}(\overline{h}(\hat{x})) \right \vert \\
   & = \sup_{\hat{x} \in \hat{X}} \overline{D}_{\hat{X}}(\overline{g}(\hat{x}), \overline{h}(\hat{x})) \\
   & = \sup_{\hat{x} \in \hat{X}} D_{\hat{X}}(\overline{g}(\hat{x}), \overline{h}(\hat{x})).
\end{align*}
Therefore, if $\overline{\hat{D}}_{\mathrm{Aut}}(\overline{g}, \overline{h})=0$, then $D_{\hat{X}}(\overline{g}(\hat{x}), \overline{h}(\hat{x}))=0$ for every $\hat x\in \hat X$.
Since $D_{\hat{X}}$ is a metric, $\overline{g}(\hat{x}) = \overline{h}(\hat{x})$ for every $\hat{x} \in \hat{X}$, whence $\overline{g}=\overline{h}$. 
So, $(\mathrm{Aut}_{\overline{\hat{\Phi}}}(\hat{X}),\overline{\hat{D}}_{\mathrm{Aut}})$ is a metric space.

\begin{proposition}\label{ChgClCpPhOp}
Every $\check{g} \in \mathrm{Aut}_{\hat{\Phi}}(\hat{X})$ is a $\overline{\hat{\Phi}}-$operation. 
\end{proposition}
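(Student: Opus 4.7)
The plan is to show that for any $\check{g}\in\mathrm{Aut}_{\hat{\Phi}}(\hat{X})$ and any $\overline{\varphi}\in\overline{\hat{\Phi}}$, the composite $\overline{\varphi}\circ\check{g}$ lies in $\overline{\hat{\Phi}}$, i.e., it is a uniform limit of functions in $\hat{\Phi}$. Since $\check{g}$ is already a $\hat{\Phi}$-operation by hypothesis, the case $\overline{\varphi}\in\hat{\Phi}$ is immediate, so the real content is in passing from $\hat{\Phi}$ to its closure by a density argument.

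First, I would pick $\overline{\varphi}\in\overline{\hat{\Phi}}$ and choose a sequence $(\hat{\varphi}_n)_{n\in\mathbb{N}}$ in $\hat{\Phi}$ with $\hat{\varphi}_n\to\overline{\varphi}$ uniformly on $\hat{X}$ (the closure is taken in $C(\hat{X},\mathbb{R})$ equipped with the sup norm). Next, observe that each $\hat{\varphi}_n\circ\check{g}$ belongs to $\hat{\Phi}$, because $\check{g}\in\mathrm{Aut}_{\hat{\Phi}}(\hat{X})$. The heart of the proof is the elementary observation that precomposing with any self-map of $\hat{X}$ is non-expansive with respect to the uniform norm:
\[
\sup_{\hat{x}\in\hat{X}}\bigl|\hat{\varphi}_n(\check{g}(\hat{x}))-\overline{\varphi}(\check{g}(\hat{x}))\bigr|
\le \sup_{\hat{y}\in\hat{X}}\bigl|\hat{\varphi}_n(\hat{y})-\overline{\varphi}(\hat{y})\bigr|\longrightarrow 0.
\]
Therefore $\hat{\varphi}_n\circ\check{g}\to\overline{\varphi}\circ\check{g}$ uniformly on $\hat{X}$, and $\overline{\varphi}\circ\check{g}$ is the uniform limit of a sequence in $\hat{\Phi}$, hence lies in $\overline{\hat{\Phi}}$.

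By the arbitrariness of $\overline{\varphi}\in\overline{\hat{\Phi}}$, this proves that $\check{g}$ is a $\overline{\hat{\Phi}}$-operation, as required. I do not foresee any serious obstacle: the argument is a standard ``continuity of composition in the sup norm'' density argument, and the only points worth stating carefully are that the sequence $(\hat{\varphi}_n\circ\check{g})_n$ is genuinely in $\hat{\Phi}$ (which is precisely the defining property of $\check{g}\in\mathrm{Aut}_{\hat{\Phi}}(\hat{X})$) and that uniform convergence on $\hat{X}$ is preserved by precomposing with an arbitrary map into $\hat{X}$. Note that invertibility of $\check{g}$ is not needed here; only that $\check{g}$ carries $\hat{\Phi}$ into itself by precomposition is used.
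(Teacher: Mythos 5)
Your proof is correct and follows essentially the same route as the paper: approximate $\overline{\varphi}$ by a sequence in $\hat{\Phi}$, note each $\hat{\varphi}_n\circ\check{g}\in\hat{\Phi}$, and pass to the uniform limit. The only cosmetic difference is that the paper invokes bijectivity of $\check{g}$ to get $\|\hat{\varphi}_n\check{g}-\overline{\varphi}\check{g}\|_\infty=\|\hat{\varphi}_n-\overline{\varphi}\|_\infty$, whereas you correctly observe that the inequality $\le$ (and hence no invertibility) suffices.
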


\begin{proof}
Let $\overline{\varphi} \in \overline{\hat{\Phi}}$ and $\check{g} \in \mathrm{Aut}_{\hat{\Phi}}(\hat{X})$. We show that $\overline{\varphi} \check{g} \in \overline{\hat{\Phi}}$.

There is a sequence $(\hat{\varphi}_n)_{n \in \mathbb{N}}$ in $\hat{\Phi}$ such that $\hat{\varphi}_n \rightarrow \overline{\varphi}$. As $\check{g}$ is a bijection of $\hat{X}$ by Proposition \ref{PhiOpIso}, we have \[ \| \hat{\varphi}_n \check{g} - \overline{\varphi} \check{g} \|_{\infty} = \| \hat{\varphi}_n - \overline{\varphi} \|_{\infty}; \] whence $\hat{\varphi}_n \check{g} \rightarrow \overline{\varphi} \check{g}$ in the space $C(\hat{X},\mathbb{R})$. As $\check{g}$ is a $\hat{\Phi}-$operation, $(\hat{\varphi}_n \check{g})_{n \in \mathbb{N}}$ is a sequence in the space $\hat{\Phi} \subseteq C(\hat{X},\mathbb{R})$. Consequently, $\overline{\varphi} \check{g} \in \overline{\hat{\Phi}}$.
\end{proof}

\begin{corollary}\label{CpSbstClCp}
$\hat{G} \subseteq \mathrm{Aut}_{\hat{\Phi}}(\hat{X}) \subseteq \mathrm{Aut}_{\overline{\hat{\Phi}}}(\hat{X})$.
\end{corollary}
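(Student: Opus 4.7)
The corollary asserts two inclusions, and my plan is to verify each one by chaining together results that have already been established. The first inclusion $\hat{G} \subseteq \mathrm{Aut}_{\hat{\Phi}}(\hat{X})$ is essentially immediate: by the definition $\hat G = \{\hat g : g \in G\}$, and Corollary \ref{IndBjInvPhiOp} guarantees that each $\hat g$ lies in $\mathrm{Aut}_{\hat{\Phi}}(\hat{X})$. So that half of the statement is just a reformulation of Corollary \ref{IndBjInvPhiOp}.

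For the second inclusion $\mathrm{Aut}_{\hat{\Phi}}(\hat{X}) \subseteq \mathrm{Aut}_{\overline{\hat{\Phi}}}(\hat{X})$, I would argue as follows. Take an arbitrary $\check g \in \mathrm{Aut}_{\hat{\Phi}}(\hat{X})$. To show that $\check g \in \mathrm{Aut}_{\overline{\hat{\Phi}}}(\hat{X})$, I need $\check g$ to be a bijection of $\hat X$ with both $\check g$ and $\check g^{-1}$ being $\overline{\hat{\Phi}}$-operations. Bijectivity is already given since $\mathrm{Aut}_{\hat{\Phi}}(\hat{X})$ consists of invertible $\hat{\Phi}$-operations (and, by Proposition \ref{PhiOpIso}, its elements are in fact isometries). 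The fact that $\check g$ is a $\overline{\hat{\Phi}}$-operation is exactly the content of Proposition \ref{ChgClCpPhOp}. For the inverse, I use that $\mathrm{Aut}_{\hat{\Phi}}(\hat{X})$ is a group, so $\check g^{-1}$ also belongs to $\mathrm{Aut}_{\hat{\Phi}}(\hat{X})$, and applying Proposition \ref{ChgClCpPhOp} once more yields that $\check g^{-1}$ is likewise a $\overline{\hat{\Phi}}$-operation. Hence $\check g \in \mathrm{Aut}_{\overline{\hat{\Phi}}}(\hat{X})$.

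There is no real obstacle to overcome here; the corollary is a packaging of Corollary \ref{IndBjInvPhiOp} and Proposition \ref{ChgClCpPhOp}, together with the (already used) group-theoretic observation that the inverse of an invertible $\hat{\Phi}$-operation is again an invertible $\hat{\Phi}$-operation. The genuine technical step was done upstream, in the proof of Proposition \ref{ChgClCpPhOp}, where density of $\hat\Phi$ in $\overline{\hat\Phi}$ and the $\|\cdot\|_\infty$-preserving behaviour of precomposition by a bijection of $\hat X$ were combined to push the $\hat{\Phi}$-operation property to the closure. Consequently, the write-up should be only a few lines long, with the main task being to point to the correct earlier references in the correct order.
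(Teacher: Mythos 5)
Your proposal is correct and follows essentially the same route as the paper: the second inclusion is handled identically (apply Proposition \ref{ChgClCpPhOp} to $\check g$ and, using that $\mathrm{Aut}_{\hat{\Phi}}(\hat{X})$ is a group, to $\check g^{-1}$). The only cosmetic difference is that for the first inclusion you cite Corollary \ref{IndBjInvPhiOp} (elementwise membership) where the paper cites Corollary \ref{IndBjGp} (the subgroup statement), which are interchangeable here.
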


\begin{proof}
The first inclusion is given by Corollary \ref{IndBjGp}. As for the second, let $\check{g} \in \mathrm{Aut}_{\hat{\Phi}}(\hat{X})$. By Proposition \ref{ChgClCpPhOp}, $\check{g}$ is a $\overline{\hat{\Phi}}-$operation. As $\mathrm{Aut}_{\hat{\Phi}}(\hat{X})$ is a group, $\check{g}^{-1} \in \mathrm{Aut}_{\hat{\Phi}}(\hat{X})$, and again by Proposition \ref{ChgClCpPhOp}, is a $\overline{\hat{\Phi}}-$operation. Consequently, $\check{g} \in \mathrm{Aut}_{\overline{\hat{\Phi}}}(\hat{X})$, and by the arbitrariness of $\check{g}$, we have the second inclusion.
\end{proof}

\begin{remark}\label{ClCpPhCpGPP}
Corollary \ref{CpSbstClCp} implicitly states that $(\overline{\hat{\Phi}}, \hat{G})$ is a perception pair.
\end{remark}

Let $\overline{\hat{G}}$ and $\overline{\mathrm{Aut}_{\hat{\Phi}}(\hat{X})}$ respectively denote the closures of $\hat{G}$ and $\mathrm{Aut}_{\hat{\Phi}}(\hat{X})$ in the space $\mathrm{Iso}(\hat{X})$ of all isometries of 
$(\hat{X},D_{\hat{X}})$. Recall that the topology on $\mathrm{Iso}(\hat{X})$ is given by the restriction to $\mathrm{Iso}(\hat{X})$ of the metric $\hat d_\infty$ defined on $C(\hat X,\hat X)$ by setting $\hat d_\infty(f,g) := \sup_{\hat x \in \hat X} D_{\hat X}(f(\hat x), g(\hat x)), \ \mathrm{for\ every\ } f, g \in C(\hat X,\hat X)$. Note also that the isometries of $\hat{X}$ with respect to the metric $D_{\hat{X}}$ coincide with those induced by the metric $\overline{D}_{\hat{X}}$ (Proposition \ref{DTild}).

\begin{corollary}\label{ClCpsbstClCp}
$\overline{\hat{G}} \subseteq \overline{\mathrm{Aut}_{\hat{\Phi}}(\hat{X})} \subseteq \mathrm{Aut}_{\overline{\hat{\Phi}}}(\hat{X}) \subseteq \mathrm{Iso}(\hat{X})$.
\end{corollary}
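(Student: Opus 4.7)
The corollary breaks into three inclusions, only the middle one being substantial. The first, $\overline{\hat{G}} \subseteq \overline{\mathrm{Aut}_{\hat{\Phi}}(\hat{X})}$, is immediate from the monotonicity of closure applied to $\hat{G} \subseteq \mathrm{Aut}_{\hat{\Phi}}(\hat{X})$ (Corollary \ref{IndBjGp} or the first inclusion in Corollary \ref{CpSbstClCp}). The third, $\mathrm{Aut}_{\overline{\hat{\Phi}}}(\hat{X}) \subseteq \mathrm{Iso}(\hat{X})$, is a direct instance of Proposition \ref{PhiOpIso} applied to the data set $\overline{\hat{\Phi}}$ on $\hat{X}$ equipped with the metric $\overline{D}_{\hat{X}} = D_{\hat{X}}$ (Proposition \ref{DTild}).

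For the middle inclusion $\overline{\mathrm{Aut}_{\hat{\Phi}}(\hat{X})} \subseteq \mathrm{Aut}_{\overline{\hat{\Phi}}}(\hat{X})$, I would take $\overline{g} \in \overline{\mathrm{Aut}_{\hat{\Phi}}(\hat{X})} \subseteq \mathrm{Iso}(\hat{X})$ and fix a sequence $(g_n)_{n\in\mathbb{N}}$ in $\mathrm{Aut}_{\hat{\Phi}}(\hat{X})$ with $g_n \to \overline{g}$ in $\hat{d}_\infty$. Being a limit of isometries of $\hat{X}$, the map $\overline{g}$ is already bijective. It remains to show that both $\overline{g}$ and $\overline{g}^{-1}$ are $\overline{\hat{\Phi}}$-operations. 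For $\overline{g}$, pick any $\overline{\varphi} \in \overline{\hat{\Phi}}$. By Proposition \ref{ChgClCpPhOp}, $\overline{\varphi} \circ g_n \in \overline{\hat{\Phi}}$ for every $n$. Since $\overline{\varphi}$ is a uniform limit of elements of $\hat{\Phi}$, which are non-expansive (Proposition \ref{SigNonExp} or Corollary \ref{ExtNonExp}), $\overline{\varphi}$ itself is non-expansive, hence
\[
\|\overline{\varphi} \circ g_n - \overline{\varphi} \circ \overline{g}\|_\infty \leq \sup_{\hat{x}\in\hat{X}} D_{\hat{X}}(g_n(\hat{x}), \overline{g}(\hat{x})) = \hat{d}_\infty(g_n, \overline{g}) \to 0.
\]
Because $\overline{\hat{\Phi}}$ is closed in $C(\hat{X},\mathbb{R})$, the limit $\overline{\varphi} \circ \overline{g}$ lies in $\overline{\hat{\Phi}}$, so $\overline{g}$ is a $\overline{\hat{\Phi}}$-operation.

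The main obstacle is controlling the inverses, since $\overline{\varphi}\circ \overline{g}^{-1}$ has to be dealt with as well. The plan is to exploit that inversion is (Lipschitz) continuous on $\mathrm{Iso}(\hat{X})$: using that each $g_n^{-1}$ is itself an isometry, one checks directly
\[
\hat{d}_\infty(g_n^{-1}, \overline{g}^{-1}) = \sup_{\hat{y}\in\hat{X}} D_{\hat{X}}\bigl(g_n^{-1}(\hat{y}), \overline{g}^{-1}(\hat{y})\bigr) = \sup_{\hat{x}\in\hat{X}} D_{\hat{X}}\bigl(g_n^{-1}(\overline{g}(\hat{x})), \hat{x}\bigr) \leq \hat{d}_\infty(\overline{g}, g_n),
\]
so $g_n^{-1} \to \overline{g}^{-1}$ in $\hat{d}_\infty$. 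Each $g_n^{-1}$ is in $\mathrm{Aut}_{\hat{\Phi}}(\hat{X})$, hence a $\overline{\hat{\Phi}}$-operation by Proposition \ref{ChgClCpPhOp}, and the same non-expansivity argument as above yields $\overline{\varphi}\circ \overline{g}^{-1} \in \overline{\hat{\Phi}}$ for every $\overline{\varphi}\in\overline{\hat{\Phi}}$. Combining these two facts gives $\overline{g} \in \mathrm{Aut}_{\overline{\hat{\Phi}}}(\hat{X})$, completing the chain of inclusions.
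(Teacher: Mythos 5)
Your proof is correct, but your treatment of the middle inclusion takes a genuinely different route from the paper's. The paper disposes of $\overline{\mathrm{Aut}_{\hat{\Phi}}(\hat{X})} \subseteq \mathrm{Aut}_{\overline{\hat{\Phi}}}(\hat{X})$ (and of $\overline{\hat{G}} \subseteq \mathrm{Aut}_{\overline{\hat{\Phi}}}(\hat{X})$ at the same time) by a purely topological reduction: since $\hat{X}$ and $\overline{\hat{\Phi}}$ are compact, Theorem \ref{AutComp} shows that $\mathrm{Aut}_{\overline{\hat{\Phi}}}(\hat{X})$ is \emph{closed} in $\mathrm{Iso}(\hat{X})$, and a closed set containing $\mathrm{Aut}_{\hat{\Phi}}(\hat{X})$ (Corollary \ref{CpSbstClCp}) contains its closure. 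You instead verify membership directly at the level of limit points: given $g_n \to \overline{g}$ with $g_n \in \mathrm{Aut}_{\hat{\Phi}}(\hat{X})$, you use the non-expansiveness of every $\overline{\varphi} \in \overline{\hat{\Phi}}$ and the closedness of $\overline{\hat{\Phi}}$ in $C(\hat{X},\mathbb{R})$ to push $\overline{\varphi}\circ g_n \in \overline{\hat{\Phi}}$ (Proposition \ref{ChgClCpPhOp}) through the limit, and you handle inverses via the continuity of inversion on $\mathrm{Iso}(\hat{X})$. Your argument is more elementary in that it bypasses Theorem \ref{AutComp} entirely (and hence the Hausdorff-distance machinery and the compactness of $\overline{\hat{\Phi}}$); what the paper's route buys in exchange is the closedness, hence compactness, of $\mathrm{Aut}_{\overline{\hat{\Phi}}}(\hat{X})$ itself, which is reused immediately afterwards in Proposition \ref{ClCpGTpGp}. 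One small remark: bijectivity of $\overline{g}$ needs no argument about limits of isometries, since the closures are by convention taken inside $\mathrm{Iso}(\hat{X})$, so $\overline{g}$ is bijective by definition of that ambient space; if you did want to derive it from the limit, you would be re-proving part of Proposition \ref{IsoXComp}, which requires the compactness of $\hat{X}$.
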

\begin{proof}
The last inclusion is given by Proposition \ref{PhiOpIso}. As we have seen at the beginning of Section \ref{CR}, $\hat{X}$ is compact, and $\overline{\hat{\Phi}}$ is compact by Proposition \ref{ClCpPhComp}. It follows from Theorem \ref{AutComp} that $\mathrm{Aut}_{\overline{\hat{\Phi}}}(\hat{X})$ is a closed subspace of $\mathrm{Iso}(\hat{X})$. Since $\hat{G} \subseteq \mathrm{Aut}_{\hat{\Phi}}(\hat{X}) \subseteq \mathrm{Aut}_{\overline{\hat{\Phi}}}(\hat{X})$ (Corollary \ref{CpSbstClCp}), we have the first two inclusions.

\end{proof}

\begin{remark}
Corollary \ref{ClCpsbstClCp} directly implies that the closure of $\hat{G}$ in $\mathrm{Aut}_{\overline{\hat{\Phi}}}(\hat{X})$ coincides with $\overline{\hat{G}}$, i.e., with the closure of $\hat{G}$ in the space $\mathrm{Iso}(\hat{X})$. Similarly the closure of $\mathrm{Aut}_{\hat{\Phi}}(\hat{X})$ in $\mathrm{Aut}_{\overline{\hat{\Phi}}}(\hat{X})$ coincides with $\overline{\mathrm{Aut}_{\hat{\Phi}}(\hat{X})}$.
\end{remark}

Incidentally, Proposition \ref{IsoXComp} and Corollary \ref{ClCpsbstClCp} also give that the spaces $\overline{\hat{G}}$ and $\overline{\mathrm{Aut}_{\hat{\Phi}}(\hat{X})}$ are compact.

\begin{proposition}\label{ClCpGTpGp}
The groups $\hat{G}$ and $\overline{\hat{G}}$ are both topological subgroups of the compact group $\mathrm{Aut}_{\overline{\hat{\Phi}}}(\hat{X})$.
\end{proposition}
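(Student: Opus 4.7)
The plan is to assemble the statement from pieces that are already in place, without any new computation. First I would verify that $\mathrm{Aut}_{\overline{\hat{\Phi}}}(\hat X)$ is genuinely a compact topological group: Theorem \ref{AutTopGp} applied to the perception pair $(\overline{\hat{\Phi}},\mathrm{Aut}_{\overline{\hat{\Phi}}}(\hat X))$ on the domain $\hat X$ gives the topological group structure, while the compactness is already recorded in the proof of Corollary \ref{ClCpsbstClCp} (namely, $\hat X$ is compact and $\overline{\hat{\Phi}}$ is compact by Proposition \ref{ClCpPhComp}, so $\mathrm{Iso}(\hat X)$ is compact by Proposition \ref{IsoXComp}, and $\mathrm{Aut}_{\overline{\hat{\Phi}}}(\hat X)$ is closed in $\mathrm{Iso}(\hat X)$ by Theorem \ref{AutComp}).

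Next, I would show that $\hat G$ sits inside this topological group as an abstract subgroup. By Corollary \ref{IndBjGp}, $\hat G$ is a subgroup of $\mathrm{Aut}_{\hat{\Phi}}(\hat X)$, and by Corollary \ref{CpSbstClCp}, $\mathrm{Aut}_{\hat{\Phi}}(\hat X)\subseteq \mathrm{Aut}_{\overline{\hat{\Phi}}}(\hat X)$; composing, $\hat G$ is a subgroup of $\mathrm{Aut}_{\overline{\hat{\Phi}}}(\hat X)$. Since, as remarked before Proposition \ref{TpSbGp}, a subgroup of a topological group is automatically a topological group with respect to the induced topology, $\hat G$ is a topological subgroup of $\mathrm{Aut}_{\overline{\hat{\Phi}}}(\hat X)$.

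For $\overline{\hat G}$, I would simply invoke Proposition \ref{TpSbGp}: the closure of a subgroup of a topological group is a topological subgroup. One mild point to verify, to avoid ambiguity, is that the closure of $\hat G$ computed inside $\mathrm{Aut}_{\overline{\hat{\Phi}}}(\hat X)$ coincides with the set $\overline{\hat G}$ as defined in the text (the closure in $\mathrm{Iso}(\hat X)$); but this is exactly the content of the remark immediately following Corollary \ref{ClCpsbstClCp}, which tells us that because $\mathrm{Aut}_{\overline{\hat{\Phi}}}(\hat X)$ is closed in $\mathrm{Iso}(\hat X)$, the two closures agree. With this identification, Proposition \ref{TpSbGp} applied inside the compact topological group $\mathrm{Aut}_{\overline{\hat{\Phi}}}(\hat X)$ yields that $\overline{\hat G}$ is a topological subgroup as well.

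I do not expect any real obstacle here: the proof is essentially a bookkeeping exercise, and the only subtlety is the identification of the ambient space in which the closure is taken. Everything else is a direct citation of previously established corollaries and the general principle that subgroups and closures of subgroups of topological groups inherit the topological-group structure.
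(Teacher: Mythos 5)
Your proposal is correct and follows essentially the same route as the paper's proof, which likewise combines Corollary \ref{ClCpsbstClCp} (for the inclusions $\hat{G} \subseteq \overline{\hat{G}} \subseteq \mathrm{Aut}_{\overline{\hat{\Phi}}}(\hat{X})$), Theorem \ref{AutTopGp} (for the topological group structure), and Theorem \ref{AutComp} (for compactness). Your explicit appeal to Proposition \ref{TpSbGp} for $\overline{\hat{G}}$ and your care about which ambient space the closure is taken in are points the paper leaves implicit, but they are exactly the intended justification.
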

\begin{proof}
By Corollary \ref{ClCpsbstClCp}, we have $\hat{G} \subseteq \overline{\hat{G}} \subseteq \mathrm{Aut}_{\overline{\hat{\Phi}}}(\hat{X})$. By Theorem \ref{AutTopGp}, $\mathrm{Aut}_{\overline{\hat{\Phi}}}(\hat{X})$ is a topological group. So, $\hat{G}$ and $\overline{\hat{G}}$, being subgroups of a topological group are likewise topological. The compactness of $\mathrm{Aut}_{\overline{\hat{\Phi}}}(\hat{X})$ is given by Theorem \ref{AutComp}.
\end{proof}

We can now state

\begin{theorem}\label{PPCp}
Given any perception pair $(\Phi,G)$, $\mathrm{dom}(\Phi) = X$ with totally bounded $\Phi$ endowing $X$ with a metric structure, the perception pair $(\overline{\hat{\Phi}}, \overline{\hat{G}})$, $\mathrm{dom}(\overline{\hat{\Phi}}) = \hat{X}$ is compact.
\end{theorem}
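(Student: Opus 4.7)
The plan is to verify directly the two conditions in the definition of a compact perception pair: that $(\overline{\hat{\Phi}}, \overline{\hat{G}})$ is a perception pair with $\mathrm{dom}(\overline{\hat{\Phi}}) = \hat{X}$, and that the three spaces $\overline{\hat{\Phi}}$, $\overline{\hat{G}}$, and $\hat{X}$ are all compact. Essentially all of the substantive work has been done by the time we reach this statement, so the theorem should reduce to an assembly of results from Subsections \ref{ExSg} and \ref{IsoComp}.

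First I would establish that $(\overline{\hat{\Phi}}, \overline{\hat{G}})$ is a perception pair. By Corollary \ref{ClCpsbstClCp} we have $\overline{\hat{G}} \subseteq \mathrm{Aut}_{\overline{\hat{\Phi}}}(\hat{X})$, and by Proposition \ref{ClCpGTpGp} the set $\overline{\hat{G}}$ is in fact a subgroup of $\mathrm{Aut}_{\overline{\hat{\Phi}}}(\hat{X})$. Thus the perception pair structure is immediate, with $\hat{X}$ serving as the common domain.

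Next I would verify compactness of each of the three ingredients. Since $\Phi$ is totally bounded by hypothesis, Theorem \ref{PhiBddXBdd} gives that $(X, D_X)$ is totally bounded; the completion $\hat{X}$ inherits total boundedness from $X$ via the isometric embedding $j : X \to \hat{X}$, and being complete by construction, $\hat{X}$ is compact. Compactness of $\overline{\hat{\Phi}}$ is precisely Proposition \ref{ClCpPhComp}. Finally, by Proposition \ref{IsoXComp} the compactness of $(\hat{X}, D_{\hat{X}})$ implies that $(\mathrm{Iso}(\hat{X}), \hat{d}_\infty)$ is compact; since $\overline{\hat{G}}$ is, by definition, the closure of $\hat{G}$ in $\mathrm{Iso}(\hat{X})$, it is a closed subset of a compact space, and therefore compact.

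Putting these three compactness statements together with the perception pair structure yields the theorem. The main obstacle here is not the proof itself but rather locating the correct citations among the many intermediate results built up in the preceding subsections; the genuine mathematical content — passing from $\hat{\Phi}$ to its closure to get compactness of the signal space, and from $\hat{G}$ to its closure inside a compact isometry group to get compactness of the group while preserving the subgroup property — has already been carried out in Propositions \ref{ClCpPhComp}, \ref{ClCpGTpGp} and Corollary \ref{ClCpsbstClCp}.
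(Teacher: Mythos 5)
Your proposal is correct and follows essentially the same route as the paper, which proves the theorem by simply assembling Proposition \ref{ClCpPhComp} for $\overline{\hat{\Phi}}$, Proposition \ref{ClCpGTpGp} (together with the remark, made just before the theorem, that Proposition \ref{IsoXComp} and Corollary \ref{ClCpsbstClCp} give compactness of $\overline{\hat{G}}$) for the group, and the compactness of $\hat{X}$ established at the start of Section \ref{CR}. Your write-up is in fact slightly more explicit than the paper's one-line proof about why $\overline{\hat{G}}$ is compact (closed subset of the compact $\mathrm{Iso}(\hat{X})$), but this is exactly the argument the paper records incidentally, so there is no substantive difference.
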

\begin{proof}
Propositions \ref{ClCpPhComp} and \ref{ClCpGTpGp} together give the assertion.
\end{proof}

\begin{definition}
We say that the perception pair $(\Phi, G)$ with $\mathrm{dom}(\Phi) = X$ is \textit{isometrically embedded} into the perception pair $(\Phi^*, G^*)$ with $\mathrm{dom}(\Phi^*) = X^*$ if there are isometric embeddings $j^* : X \to X^*$, $i^* : \Phi \to \Phi^*$, and $k^* : G \to G^*$ such that the images $j^*(X)$, $i^*(\Phi)$, and $k^*(G)$ are all dense in $X^*$, $\Phi^*$, and $G^*$ respectively, and the following commutativity conditions are satisfied:
$i^*(\varphi) \circ j^* = \varphi$ for every $\varphi \in \Phi$ and $k^*(g) \circ j^* = j^* \circ g$ for every $g \in G$. If $(\Phi^*, G^*)$ is compact, it is said to be a \textit{compactification} of $(\Phi, G)$.
\end{definition}

With this definition at our disposal, we summarize

\begin{theorem}\label{CompPercPa}
Every perception pair $(\Phi, G)$, $\mathrm{dom}(\Phi) = X$, with totally bounded $\Phi$ endowing $X$ with a metric structure, admits a compactification $(\Phi^*, G^*)$, $\mathrm{dom}(\Phi^*) = X^*$.
\end{theorem}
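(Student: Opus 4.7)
The plan is to let the candidate compactification be exactly the one that has been built up throughout Section 4.1--4.2, namely $(\Phi^*, G^*) = (\overline{\hat{\Phi}}, \overline{\hat{G}})$ with $\mathrm{dom}(\Phi^*) = X^* = \hat{X}$, and then to check that all the requirements of the definition of a compactification are met. Compactness is already done: Theorem \ref{PPCp} asserts that this perception pair is compact (and $\hat{X}$ itself is compact because $X$ is totally bounded by Theorem \ref{PhiBddXBdd}, so its metric completion is both complete and totally bounded). Thus no further work is needed for the compactness clause.

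Next I would define the embeddings. Take $j^* = j : X \hookrightarrow \hat{X}$ (the canonical inclusion into the metric completion), $i^* = i : \Phi \to \hat{\Phi} \subseteq \overline{\hat{\Phi}}$ sending $\varphi \mapsto \hat{\varphi}$, and $k^* = k : G \to \hat{G} \subseteq \overline{\hat{G}}$ sending $g \mapsto \hat{g}$. Each of these is an isometric embedding: $j$ is an isometry by the defining property of the metric completion (combined with Proposition \ref{PsMetEq}, which ensures that the completion metric $\hat{D}_{\hat{X}}$ agrees with the metric $D_{\hat{X}}$ induced by $\hat{\Phi}$, and hence with $\overline{D}_{\hat{X}}$ by Proposition \ref{DTild}); $i$ is an isometry by Theorem \ref{ExtSgIso}; and $k$ is an isometry by Proposition \ref{kiso}.

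For density, the image $j(X) = X$ is dense in $\hat{X}$ by construction of the completion; the image $i(\Phi) = \hat{\Phi}$ is dense in $\overline{\hat{\Phi}}$ trivially, since $\overline{\hat{\Phi}}$ is defined as the closure of $\hat{\Phi}$ in $C(\hat{X},\mathbb{R})$; and likewise $k(G) = \hat{G}$ is dense in $\overline{\hat{G}}$ by definition of the closure in $\mathrm{Iso}(\hat{X})$. Finally the compatibility conditions are read off directly from earlier remarks: $i(\varphi)\circ j = \hat{\varphi}|_X = \varphi$ for every $\varphi\in\Phi$ is Remark \ref{ComSg}, and $k(g)\circ j = \hat{g}|_X = g = j\circ g$ for every $g\in G$ is Proposition \ref{ComBj}.

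There is essentially no obstacle here: the statement of the theorem is a packaging result whose content has been entirely distributed across the preceding subsections, so the proof reduces to citing Theorem \ref{PPCp} for compactness together with Theorem \ref{ExtSgIso}, Proposition \ref{kiso}, Remark \ref{ComSg} and Proposition \ref{ComBj} for the isometric/dense/commutative embeddings. The only mildly delicate point worth highlighting explicitly is that the metrics on the various completions and closures coincide with the original structural metrics, so that the word ``isometric'' is unambiguous; this was secured by Propositions \ref{PsMetEq} and \ref{DTild}, which is why the relevant steps above can invoke them without ambiguity.
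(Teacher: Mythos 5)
Your proposal is correct and follows exactly the same route as the paper: the paper's proof is the one-line instruction ``Put $\Phi^* := \overline{\hat{\Phi}}$ and $G^* := \overline{\hat{G}}$ in Theorem \ref{PPCp},'' leaving the verification of the embedding, density, and commutativity clauses implicit in the preceding constructions (Theorem \ref{ExtSgIso}, Proposition \ref{kiso}, Remark \ref{ComSg}, Proposition \ref{ComBj}). You simply make those implicit checks explicit, which is a faithful and if anything more complete rendering of the intended argument.
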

\begin{proof}
Put $\Phi^* := \overline{\hat{\Phi}}$ and $G^* := \overline{\hat{G}}$ in Theorem \ref{PPCp}.
\end{proof}


\subsection{GENEOs and Completions}\label{GNsCompls}

Our next goal is to construct compactifications of the spaces $\mathcal{F} \subseteq \mathcal{F}^{\mathrm{all}}_{T}$ of GENEOs $(F,T) : (\Phi,G) \to (\Psi,H)$ with the property that the images $F(\Phi), F \in \mathcal{F}$ form a cover for the data set $\Psi$, while maintaining the assumptions that $\Phi$ and $\Psi$ are totally bounded and endow $X$ and $Y$ with metric structures. We have shown, under these assumptions, that the perception pairs $(\Phi,G)$, $\mathrm{dom}(\Phi) = X$ and $(\Psi,H)$, $\mathrm{dom}(\Psi) = Y$ can be embedded nicely into the perception pairs $(\hat{\Phi},\hat{G})$, $\mathrm{dom}(\hat{\Phi}) = \hat{X}$ and $(\hat{\Psi},\hat{H})$, $\mathrm{dom}(\hat{\Psi}) = \hat{Y}$, respectively, through the compatible isometries $(j_1, j_2) : (X, Y) \to (\hat{X}, \hat{Y})$, $(i_1, i_2) : (\Phi, \Psi) \to (\hat{\Phi}, \hat{\Psi})$, and $(k_1, k_2) : (G, H) \to (\hat{G}, \hat{H})$, and are now in a position to use the GENEOs $(F,T) : (\Phi,G) \to (\Psi,H)$ to define new GENEOs $(\hat{F},\hat{T}) : (\hat{\Phi},\hat{G}) \to (\hat{\Psi},\hat{H})$. Our construction will be further extended to the GENEOs $(\overline{\hat{F}},\hat{T}) : (\overline{\hat{\Phi}}, \hat{G}) \to (\overline{\hat{\Psi}},\hat{H})$ and $(\overline{\hat{F}},\overline{\hat{T}}) : (\overline{\hat{\Phi}},\overline{\hat{G}}) \to (\overline{\hat{\Psi}},\overline{\hat{H}})$ later.


\subsubsection{The Induced GENEOs}
\label{CpF}
Let $(F,T) : (\Phi,G) \to (\Psi,H)$ be a GENEO in $\mathcal{F} \subseteq \mathcal{F}^{\mathrm{all}}_{T}$. We put \[ \hat{F}(\hat{\varphi}) := \widehat{F(\varphi)}, \] and \[ \hat{T}(\hat{g}) := \widehat{T(g)}, \] where $\varphi \in \Phi$, $g \in G$, and $F \in \mathcal{F} \subseteq \mathcal{F}_{T}^{all}$.

The maps $\hat{F} : \hat{\Phi} \to \hat{\Psi}$ and $\hat{T} : \hat{G} \to \hat{H}$ are clearly well defined, since the maps $i_1:\Phi\to\hat \Phi$ (taking $\varphi$ to $\hat\varphi$) and $k_1:G\to\hat G$ (taking $g$ to $\hat g$) are injective. Moreover,

\begin{remark}
The map $\hat{F}$ is injective if and only if $F \in \mathcal{F}$ is an injection. As $i_2 : \Psi \to \hat{\Psi}$ is injective, we have \[ \hat{F}(\hat{\varphi}_1) = \hat{F}(\hat{\varphi}_2) \xLeftRightarrow[\mathrm{def}]{} \widehat{F(\varphi_1)} = \widehat{F(\varphi_2)} \iff F(\varphi_1) = F(\varphi_2), \] for every $\varphi_1, \varphi_2 \in \Phi$. The injectivity of $i_1 : \Phi \to \hat{\Phi}$, together with these equivalences, gives the assertion.
\end{remark}

Recalling Propositions \ref{IndBjCmt} and \ref{IndBjFnCompCmt}, we prove

\begin{proposition}\label{CpFGO}
The map $\hat{F} : \hat{\Phi} \to \hat{\Psi}$ is a GENEO with respect to $\hat{T} : \hat{G} \to \hat{H}$.
\end{proposition}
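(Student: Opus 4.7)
The plan is to verify the two defining conditions of a GENEO: equivariance with respect to $\hat{T}$ and non-expansivity. A preliminary but essential observation is that $\hat{T}:\hat G\to\hat H$ must be a group homomorphism for the statement to even make sense; this follows from Proposition \ref{IndBjFnCompCmt} applied in both $(X,G)$ and $(Y,H)$, since for every $g,h\in G$ we get $\hat{T}(\hat g\hat h)=\hat{T}(\widehat{gh})=\widehat{T(gh)}=\widehat{T(g)T(h)}=\widehat{T(g)}\,\widehat{T(h)}=\hat{T}(\hat g)\hat{T}(\hat h)$, where the penultimate equality is another use of Proposition \ref{IndBjFnCompCmt} inside $(Y,H)$ and the fact that $T$ is a homomorphism.

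For equivariance, fix $\hat\varphi\in\hat\Phi$ and $\hat g\in\hat G$, i.e.\ $\hat\varphi=\widehat{\varphi}$ for some $\varphi\in\Phi$ and $\hat g=\widehat{g}$ for some $g\in G$. I would chain three equalities: first, Proposition \ref{IndBjCmt} gives $\hat\varphi\circ\hat g=\widehat{\varphi g}$, so by the definition of $\hat F$ one has $\hat F(\hat\varphi\circ\hat g)=\hat F(\widehat{\varphi g})=\widehat{F(\varphi g)}$; second, the equivariance of $F$ on the original perception pairs yields $\widehat{F(\varphi g)}=\widehat{F(\varphi)\circ T(g)}$; third, Proposition \ref{IndBjCmt} applied in $(\Psi,H)$ turns the hat of a composition into the composition of hats, $\widehat{F(\varphi)\circ T(g)}=\widehat{F(\varphi)}\circ\widehat{T(g)}=\hat F(\hat\varphi)\circ\hat T(\hat g)$. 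Concatenating these yields the equivariance identity.

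For non-expansivity, let $\hat\varphi_1,\hat\varphi_2\in\hat\Phi$ with $\hat\varphi_i=\widehat{\varphi_i}$. By the definition of $\hat F$ and the fact that the extension correspondence $i_2:\Psi\to\hat\Psi$ is an isometry (Theorem \ref{ExtSgIso}),
\begin{equation*}
\|\hat F(\hat\varphi_1)-\hat F(\hat\varphi_2)\|_\infty=\|\widehat{F(\varphi_1)}-\widehat{F(\varphi_2)}\|_\infty=\|F(\varphi_1)-F(\varphi_2)\|_\infty.
\end{equation*}
Applying non-expansivity of $F$ and then Theorem \ref{ExtSgIso} for $i_1:\Phi\to\hat\Phi$ gives the chain
\begin{equation*}
\|F(\varphi_1)-F(\varphi_2)\|_\infty\le\|\varphi_1-\varphi_2\|_\infty=\|\hat\varphi_1-\hat\varphi_2\|_\infty,
\end{equation*}
which completes the verification.

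I do not foresee any serious obstacle here: the whole argument is a direct ``naturality'' computation that leverages three already-established facts, namely Propositions \ref{IndBjCmt} and \ref{IndBjFnCompCmt} (hat commutes with composition) and Theorem \ref{ExtSgIso} (the extension of signals is an isometry). The only mild subtlety is to make sure the hats are taken in the correct perception pair at each step — equivariance uses Proposition \ref{IndBjCmt} once on the source side and once on the target side, and the homomorphism check for $\hat T$ similarly hops between $(X,G)$ and $(Y,H)$.
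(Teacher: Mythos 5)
Your proposal is correct and follows essentially the same route as the paper's own proof: the homomorphism check for $\hat{T}$ and the equivariance chain both rely on Propositions \ref{IndBjCmt} and \ref{IndBjFnCompCmt} applied on the source and target sides, and non-expansivity is deduced from Theorem \ref{ExtSgIso} exactly as in the paper. No gaps.
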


\begin{proof}
It is easy to see that $\hat{T} : \hat{G} \to \hat{H}$ is a group homomorphism: If $a, b \in G$, then

\begin{align*}
     \hat{T}(\hat{a}\hat{b}) & = \hat{T}(\widehat{ab}) \\
     & = \widehat{T(ab)} \\
     & = \widehat{T(a)T(b)} \\
     & = \widehat{T(a)}\widehat{T(b)} \\
     & = \hat{T}(\hat{a})\hat{T}(\hat{b}).
\end{align*}

Similarly, if $\varphi \in \Phi$, $g \in G$, and $F \in \mathcal{F}$, we have:

\begin{align*}
    \hat{F}(\hat{\varphi} \hat{g}) & = \hat{F}(\widehat{\varphi g}) \\
    & = \widehat{F(\varphi g)} \\
    & = \widehat{F(\varphi)T(g)} \\
    & = \widehat{F(\varphi)}\widehat{T(g)} \\
    & = \hat{F}(\hat{\varphi})\hat{T}(\hat{g}).
\end{align*}
So, $\hat{F}$ is $\hat{T}-$equivariant.

Now, let $\varphi_1, \varphi_2 \in \Phi$. As $i_1 : \Phi \to \hat{\Phi}$ and $i_2 : \Psi \to \hat{\Psi}$ are isometries (Theorem \ref{ExtSgIso}) and $F : \Phi \to \Psi$ is non-expansive,
\begin{align*}
D_{\hat{\Psi}}(\hat{F}(\hat{\varphi}_1),\hat{F}(\hat{\varphi}_2)) & = D_{\hat{\Psi}}(\widehat{F(\varphi_1)},\widehat{F(\varphi_2)}) \\ & = D_{\Psi}(F(\varphi_1),F(\varphi_2)) \\ & \leq D_{\Phi}(\varphi_1,\varphi_2) \\ & = D_{\hat{\Phi}}(\hat{\varphi}_1, \hat{\varphi}_2)
\end{align*}
 whence $\hat{F}$ is non-expansive.

\end{proof}

Let us put \[ \mathcal{F}_1 := \{ \hat{F}: \hat{\Phi} \to \hat{\Psi} \mid F \in \mathcal{F} \}, \] and define a map $f_1 : \mathcal{F} \to \mathcal{F}_1$ by setting \[ f_1(F) := \hat{F}. \]

The set $\mathcal{F}^{\mathrm{all},1}_{\hat{T}} \supseteq \mathcal{F}_1$ of all GENEOs from $(\hat{\Phi},\hat{G})$ to $(\hat{\Psi},\hat{H})$ with respect to the homomorphism $\hat{T} : \hat{G} \to \hat{H}$ is a metric space with the distance function $D^{1}_{\mathrm{GENEO}}$ given by \[ D^{1}_{\mathrm{GENEO}}(F^{\prime}, F^{\prime \prime}) := \sup_{\hat{\varphi} \in \hat{\Phi}} D_{\hat{\Psi}}(F^{\prime}(\hat{\varphi}), F^{\prime \prime}(\hat{\varphi})), \ \mathrm{ for\ every\ } F^{\prime}, F^{\prime \prime} \in \mathcal{F}^{\mathrm{all},1}_{\hat{T}}. \]

\begin{proposition}\label{f1Iso}
The correspondence $f_1 : \mathcal{F} \to \mathcal{F}_1$ is an isometry with respect to the distances $D_{\mathrm{GENEO}}$ and $D^{1}_{\mathrm{GENEO}}$.
\end{proposition}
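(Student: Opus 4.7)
The plan is to show that $f_1$ is both surjective and distance-preserving; injectivity will then follow automatically from the latter. Surjectivity is immediate from the definition of $\mathcal{F}_1 := \{\hat{F} : \hat{\Phi} \to \hat{\Psi} \mid F \in \mathcal{F}\}$, since every element of $\mathcal{F}_1$ arises from some $F \in \mathcal{F}$ via $f_1(F) = \hat{F}$.

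For the distance preservation, I would exploit the two isometries already built in Subsection \ref{ExSg}: the bijection $i_1 : \Phi \to \hat{\Phi}$, $\varphi \mapsto \hat{\varphi}$, and its counterpart $i_2 : \Psi \to \hat{\Psi}$, $\psi \mapsto \hat{\psi}$, both shown to be isometries by Theorem \ref{ExtSgIso}. The key observation is that by definition $\hat{F}(\hat{\varphi}) = \widehat{F(\varphi)} = i_2(F(\varphi))$, so evaluating $\hat{F}_1$ and $\hat{F}_2$ at $\hat\varphi = i_1(\varphi)$ yields
\[
D_{\hat{\Psi}}\!\left(\hat{F}_1(\hat{\varphi}), \hat{F}_2(\hat{\varphi})\right) = D_{\hat{\Psi}}\!\left(\widehat{F_1(\varphi)}, \widehat{F_2(\varphi)}\right) = D_{\Psi}\!\left(F_1(\varphi), F_2(\varphi)\right),
\]
where the last equality uses that $i_2$ preserves distances. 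Since $i_1$ is a bijection between $\Phi$ and $\hat{\Phi}$, taking suprema over both sides and swapping the parametrization via $\hat{\varphi} = i_1(\varphi)$ gives
\[
D^{1}_{\mathrm{GENEO}}(\hat{F}_1, \hat{F}_2) = \sup_{\hat{\varphi}\in\hat\Phi} D_{\hat{\Psi}}\!\left(\hat{F}_1(\hat{\varphi}), \hat{F}_2(\hat{\varphi})\right) = \sup_{\varphi\in\Phi} D_{\Psi}\!\left(F_1(\varphi), F_2(\varphi)\right) = D_{\mathrm{GENEO}}(F_1, F_2),
\]
as required.

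Finally, injectivity of $f_1$ follows as a free bonus: if $F_1 \neq F_2$ in $\mathcal{F}$, there exists some $\varphi \in \Phi$ with $F_1(\varphi) \neq F_2(\varphi)$, so $D_{\mathrm{GENEO}}(F_1, F_2) \geq D_\Psi(F_1(\varphi), F_2(\varphi)) > 0$; by the identity just established, $D^{1}_{\mathrm{GENEO}}(\hat{F}_1, \hat{F}_2) > 0$, forcing $\hat{F}_1 \neq \hat{F}_2$. There is no real obstacle here, since the heavy lifting has already been done: the signals' extensions are isometric bijections, the GENEOs $\hat{F}$ are defined pointwise through $i_2$, and the definition of the GENEO distance is simply a supremum of pointwise distances, so both metrics factor cleanly through $i_1$ and $i_2$.
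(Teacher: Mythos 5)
Your proposal is correct and follows essentially the same route as the paper: surjectivity by construction, and distance preservation via the pointwise identity $D_{\hat{\Psi}}(\hat{F}_1(\hat{\varphi}),\hat{F}_2(\hat{\varphi}))=D_{\Psi}(F_1(\varphi),F_2(\varphi))$ coming from the isometry $i_2$, followed by taking suprema (the paper invokes Proposition \ref{SpDns} for the supremum swap, while you use the bijectivity of $i_1$, which is the more direct justification since $\hat{\Phi}$ is by definition the image of $\Phi$). Deriving injectivity from distance preservation instead of directly from the injectivity of $i_2$ is a harmless cosmetic variation.
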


\begin{proof}
The map $f_1$ is surjective by construction. Let $F_1, F_2 \in \mathcal{F}$ be distinct GENEOs; i.e., there is a $\varphi \in \Phi$ such that $F_1(\varphi) \neq F_2(\varphi)$. As $i_2 : \Psi \to \hat{\Psi}$ is injective, $\hat{F}_1(\hat{\varphi}) := \widehat{F_1(\varphi)} \neq  \widehat{F_2(\varphi)} =: \hat{F}_2(\hat{\varphi})$, whence $f_1(F_1) := \hat{F}_1 \neq \hat{F}_2 =: f_1(F_2)$, and $f_1 : \mathcal{F} \to \mathcal{F}_1$ is injective.

We now show that $f_1$ preserves distances. If $F_1, F_2 \in \mathcal{F}$, by applying Proposition \ref{SpDns} to the real-valued continuous function $f(\hat{\varphi}):=D_{\hat{\Psi}}(\hat{F}_1(\hat{\varphi}),\hat{F}_2(\hat{\varphi}))$,  we get
\begin{align*}
    D^{1}_{\mathrm{GENEO}}(\hat{F}_1,\hat{F}_2) & := \sup_{\hat{\varphi} \in \hat{\Phi}} D_{\hat{\Psi}}(\hat{F}_1(\hat{\varphi}),\hat{F}_2(\hat{\varphi})) \\
    & = \sup_{\varphi \in \Phi} D_{\hat{\Psi}}(\hat{F}_1(\hat{\varphi}),\hat{F}_2(\hat{\varphi}))  \\
    & = \sup_{\varphi \in \Phi} D_{\hat{\Psi}}(\widehat{F_1(\varphi)},\widehat{F_2(\varphi)}) \\
    & = \sup_{\varphi \in \Phi} D_{\Psi}(F_1(\varphi),F_2(\varphi)) \\
    & =: D_{\mathrm{GENEO}}(F_1,F_2),
\end{align*}
as $i_2 : \Psi \to \hat{\Psi}$ is an isometry by Theorem \ref{ExtSgIso}. So, the bijection $f_1$ is an isometry.
\end{proof}

From the definitions of $\hat{F}$ and $\hat{T}$, it is already clear that the following natural commutativity conditions are trivially satisfied: 

\begin{proposition}\label{ComGN1}
For each $F \in \mathcal{F}$, 
\[ i_2 \circ F = f_1(F) \circ i_1, \mathrm{\ (i.e.,\ } \widehat{F(\varphi)}=\hat F(\hat\varphi)\mathrm{\ for\ every\ } \varphi\in \Phi\mathrm{)}\] 
and 
\[ k_2 \circ T = \hat{T} \circ k_1 \mathrm{\ (i.e.,\ } \widehat{T(g)}=\hat T(\hat g)\mathrm{\ for\ every\ } g\in G\mathrm{)}. \]
\end{proposition}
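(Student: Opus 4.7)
The plan is to simply unwind the definitions involved in the statement, since both claimed equalities are effectively built into the construction of the maps $\hat{F}$ and $\hat{T}$. Each side of each equation is a composition of two maps whose actions on a generic element have been defined explicitly, so it suffices to evaluate both compositions on an arbitrary element and verify that they produce the same output.

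For the first equality, I would fix an arbitrary $\varphi \in \Phi$ and compute both $(i_2 \circ F)(\varphi)$ and $(f_1(F) \circ i_1)(\varphi)$. On the one hand, the definition of $i_2 : \Psi \to \hat{\Psi}$ gives $(i_2 \circ F)(\varphi) = i_2(F(\varphi)) = \widehat{F(\varphi)}$. On the other hand, by the definitions of $i_1$, $f_1$, and $\hat{F}$, we have $(f_1(F) \circ i_1)(\varphi) = f_1(F)(\hat{\varphi}) = \hat{F}(\hat{\varphi}) = \widehat{F(\varphi)}$. The two expressions coincide, which, by the arbitrariness of $\varphi$, proves $i_2 \circ F = f_1(F) \circ i_1$.

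For the second equality, I would fix $g \in G$ and evaluate analogously. The definition of $k_2$ yields $(k_2 \circ T)(g) = k_2(T(g)) = \widehat{T(g)}$, while the definitions of $k_1$ and $\hat{T}$ yield $(\hat{T} \circ k_1)(g) = \hat{T}(\hat{g}) = \widehat{T(g)}$. Hence $k_2 \circ T = \hat{T} \circ k_1$.

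I do not anticipate any real obstacle: the proposition is essentially a bookkeeping verification that the notation introduced for $\hat{F}$ and $\hat{T}$ in Subsection \ref{CpF} is compatible with the embeddings $i_1$, $i_2$, $k_1$, $k_2$ from the earlier constructions. The content of the proposition lies in making this compatibility explicit so that it can be cited later when establishing the remaining commutativity conditions required by the notion of a compactification.
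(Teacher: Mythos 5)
Your proposal is correct and takes essentially the same route as the paper, which states that these commutativity conditions follow "trivially" from the definitions of $\hat{F}$ and $\hat{T}$ and offers no further argument. Your explicit unwinding of $(i_2 \circ F)(\varphi) = \widehat{F(\varphi)} = \hat{F}(\hat{\varphi}) = (f_1(F)\circ i_1)(\varphi)$ and the analogous computation for $\hat{T}$ is exactly the intended verification.
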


\subsection{Compactification of the Spaces of GENEOs}
\label{CompF}
We can now extend our construction from $(\hat{F}, \hat{T}) : (\hat{\Phi}, \hat{G}) \to (\hat{\Psi}, \hat{H})$ to $(\overline{\hat{F}},\hat{T}) : (\overline{\hat{\Phi}}, \hat{G}) \to (\overline{\hat{\Psi}},\hat{H})$ and $(\overline{\hat{F}},\overline{\hat{T}}) : (\overline{\hat{\Phi}},\overline{\hat{G}}) \to (\overline{\hat{\Psi}},\overline{\hat{H}})$ successively, while maintaining the assumptions of Section \ref{GNsCompls}. First, we show that $\hat{F} : \hat{\Phi} \to \hat{\Psi}$ induces a non-expansive $\hat{T}$-equivariant map $\overline{\hat{F}} : \overline{\hat{\Phi}} \to \overline{\hat{\Psi}}$; then we will use the assumption that the family of sets $\{F(\Phi) \mid F \in \mathcal{F}\}$ covers $\Psi$ to define a group homomorphism $\overline{\hat{T}} : \overline{\hat{G}} \to \overline{\hat{H}}$ with respect to which  $\overline{\hat{F}}$ remains equivariant.

Let us define a map $\overline{\hat{F}} : \overline{\hat{\Phi}} \to \overline{\hat{\Psi}}$ as follows. Let $\overline{\varphi} \in \overline{\hat{\Phi}}$; then there is a sequence $(\hat{\varphi}_n)_{n \in \mathbb{N}}$ in $\hat{\Phi}$ such that $\hat{\varphi}_n \rightarrow \overline{\varphi}$ with respect to the uniform norm. As $\hat{F}$ is non-expansive, $(\hat{F}(\hat{\varphi}_n))_{n \in \mathbb{N}}$ is a Cauchy sequence in $\hat{\Psi} \subseteq \overline{\hat{\Psi}}$; so it converges to some $\overline{\psi}$ in the complete space $\overline{\hat{\Psi}}$. Let us put \[ \overline{\hat{F}}(\overline{\varphi}) := \overline{\psi}. \] That is, \[ \overline{\hat{F}}(\lim_{n \rightarrow \infty} \hat{\varphi}_n) := \lim_{n \rightarrow \infty} \hat{F}(\hat{\varphi}_n). \]

Note that since $\hat{F}$ is non-expansive, the map $\overline{\hat{\mathcal{F}}}$ does not depend on the sequence $(\hat{\varphi}_n)_{n \in \mathbb{N}}$ converging to $\overline{\varphi}$, and is therefore well defined. Moreover, $\overline{\hat{F}} \vert_{\hat{\Phi}} = \hat{F}$.

\begin{proposition}\label{ClCpFNExp}
The map $\overline{\hat{F}} : \overline{\hat{\Phi}} \to \overline{\hat{\Psi}}$ is a GENEO with respect to $\hat{T} : \hat{G} \to \hat{H}$.
\end{proposition}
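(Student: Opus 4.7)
The plan is to verify the two defining properties of a GENEO — non-expansivity and $\hat{T}$-equivariance — by exploiting the density of $\hat{\Phi}$ in $\overline{\hat{\Phi}}$ together with the continuity one gains from the non-expansivity of $\hat{F}$ (Proposition \ref{CpFGO}). The overall strategy is standard ``extension by continuity from a dense subset,'' and I would first check that all objects one wishes to feed into $\overline{\hat F}$ actually live in $\overline{\hat\Phi}$, then pass to limits along sequences in $\hat\Phi$.

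For non-expansivity, I would pick $\overline{\varphi}_1,\overline{\varphi}_2\in\overline{\hat{\Phi}}$ and sequences $(\hat{\varphi}_n^{(1)})_{n\in\mathbb{N}}$, $(\hat{\varphi}_n^{(2)})_{n\in\mathbb{N}}$ in $\hat{\Phi}$ converging uniformly to them. Non-expansivity of $\hat{F}$ gives
\[
\bigl\| \hat{F}(\hat{\varphi}_n^{(1)}) - \hat{F}(\hat{\varphi}_n^{(2)}) \bigr\|_\infty \le \bigl\| \hat{\varphi}_n^{(1)} - \hat{\varphi}_n^{(2)} \bigr\|_\infty,
\]
and, by the very definition of $\overline{\hat{F}}$, the left-hand side converges to $\|\overline{\hat{F}}(\overline{\varphi}_1)-\overline{\hat{F}}(\overline{\varphi}_2)\|_\infty$ while the right-hand side converges to $\|\overline{\varphi}_1-\overline{\varphi}_2\|_\infty$ by continuity of the uniform norm. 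This yields the required inequality.

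For $\hat{T}$-equivariance I first need to make sense of $\overline{\varphi}\circ\hat{g}$ and $\overline{\hat{F}}(\overline{\varphi})\circ\hat{T}(\hat{g})$ as elements of $\overline{\hat{\Phi}}$ and $\overline{\hat{\Psi}}$ respectively. Both follow from Proposition \ref{ChgClCpPhOp} applied to the perception pair $(\overline{\hat{\Phi}},\hat{G})$ (Remark \ref{ClCpPhCpGPP}) and to the corresponding codomain pair $(\overline{\hat{\Psi}},\hat{H})$: every element of $\mathrm{Aut}_{\hat{\Phi}}(\hat{X})$ is a $\overline{\hat{\Phi}}$-operation, and analogously on the codomain side. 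Then, given $\hat{\varphi}_n\to\overline{\varphi}$ in $\hat{\Phi}$, composition with $\hat g$ is an isometry of $C(\hat X,\mathbb{R})$ (since $\hat g$ is a bijection of $\hat X$), so $\hat{\varphi}_n\circ\hat{g}\to\overline{\varphi}\circ\hat{g}$ uniformly, and similarly on the codomain. Using the $\hat{T}$-equivariance of $\hat{F}$ (Proposition \ref{CpFGO}) and the definition of $\overline{\hat{F}}$, I compute
\[
\overline{\hat{F}}(\overline{\varphi}\circ\hat{g}) = \lim_{n\to\infty}\hat{F}(\hat{\varphi}_n\circ\hat{g}) = \lim_{n\to\infty}\hat{F}(\hat{\varphi}_n)\circ\hat{T}(\hat{g}) = \overline{\hat{F}}(\overline{\varphi})\circ\hat{T}(\hat{g}),
\]
where the last equality uses the continuity of right-composition by $\hat T(\hat g)$ with respect to the uniform norm.

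The only step requiring genuine care is the stability under composition: I must verify that $\overline{\varphi}\circ\hat g$ is indeed in $\overline{\hat\Phi}$ rather than just in $C(\hat X,\mathbb{R})$, and analogously on the codomain. That is not a triviality but is already packaged in Proposition \ref{ChgClCpPhOp}, so the proof reduces to invoking it on both sides and then performing the dense-extension argument outlined above. Everything else is a routine limit computation.
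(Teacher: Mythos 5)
Your proposal is correct and follows essentially the same route as the paper: non-expansivity by passing the termwise inequality $\|\hat F(\hat\varphi_n^{(1)})-\hat F(\hat\varphi_n^{(2)})\|_\infty\le\|\hat\varphi_n^{(1)}-\hat\varphi_n^{(2)}\|_\infty$ to the limit, and $\hat T$-equivariance by applying the equivariance of $\hat F$ termwise and using continuity of composition (the paper invokes Theorem \ref{AutTopGp} for the continuity of the action where you invoke the isometry of right-composition and Proposition \ref{ChgClCpPhOp}). Your extra check that $\overline{\varphi}\circ\hat g$ lands in $\overline{\hat\Phi}$ is a point the paper leaves implicit via Remark \ref{ClCpPhCpGPP}, so making it explicit is a minor improvement rather than a divergence.
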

\begin{proof}
Let $\overline{\varphi}_1, \overline{\varphi}_2 \in \overline{\hat{\Phi}}$; then there are sequences $(\hat{\varphi}_{1,n})_{n \in \mathbb{N}}$ and $(\hat{\varphi}_{2,n})_{n \in \mathbb{N}}$ in $\hat{\Phi}$ such that $\hat{\varphi}_{1,n} \rightarrow \overline{\varphi}_1$ and $\hat{\varphi}_{2,n} \rightarrow \overline{\varphi}_2$. Recalling that $\hat{F}$ is non-expansive, we compute:
\begin{align*}
    \left\| \overline{\hat{F}}(\overline{\varphi}_1) - \overline{\hat{F}}(\overline{\varphi}_2) \right\|_\infty
    &= \left\| \lim_{n \rightarrow \infty} \hat{F}(\hat{\varphi}_{1,n}) - \lim_{n \rightarrow \infty} \hat{F}(\hat{\varphi}_{2,n}) \right\|_\infty \\
    &= \lim_{n \rightarrow \infty} \left\|\hat{F}(\hat{\varphi}_{1,n}) - \hat{F}(\hat{\varphi}_{2,n})\right\|_\infty \\
    & \leq \lim_{n \rightarrow \infty} \left\| \hat{\varphi}_{1,n} - \hat{\varphi}_{2,n} \right\|_\infty \\
    &= \left\| \lim_{n \rightarrow \infty} \hat{\varphi}_{1,n} - \lim_{n \rightarrow \infty} \hat{\varphi}_{2,n} \right\|_\infty \\
    &= \| \overline{\varphi}_1 - \overline{\varphi}_2 \|_\infty;
\end{align*}
so, $\overline{\hat{F}}$ is non-expansive.

Let $\overline{\varphi} \in \overline{\hat{\Phi}}$ and $\hat{g} \in \hat{G}$. Then there is a sequence $(\hat{\varphi}_n)_{n \in \mathbb{N}}$ in $\hat{\Phi}$ such that $\hat{\varphi}_n \rightarrow \overline{\varphi}$ with respect to the uniform norm; consequently, $\hat{\varphi}_n \hat{g} \rightarrow \overline{\varphi} \hat{g}$. As $\hat{F}$ is $\hat{T}-$equivariant (Proposition \ref{CpFGO}) and the action of $\hat{H}$ on $\overline{\hat{\Psi}}$ is continuous (Theorem \ref{AutTopGp}), we have
\begin{align*}
    \overline{\hat{F}}(\overline{\varphi} \circ \hat{g})
    &= \overline{\hat{F}}(\lim_{n \rightarrow \infty} \hat{\varphi}_n \circ \hat{g})\\
    &= \lim_{n \rightarrow \infty} \hat{F}(\hat{\varphi}_n \circ \hat{g})\\
    &= \lim_{n \rightarrow \infty} (\hat{F}(\hat{\varphi}_n) \circ \hat{T}(\hat{g}))\\
    &= (\lim_{n \rightarrow \infty} \hat{F}(\hat{\varphi}_n)) \circ \hat{T}(\hat{g})\\
    &= \overline{\hat{F}}(\overline{\varphi}) \circ \hat{T}(\hat{g}),
\end{align*}
whence $\overline{\hat{F}}$ is $\hat{T}-$equivariant and the proposition is proved.
\end{proof}

Let us put \[ \mathcal{F}_2 := \left\{ \overline{\hat{F}}: \overline{\hat{\Phi}} \to \overline{\hat{\Psi}} \mid F \in \mathcal{F} \right\}, \] and define a map $f_2 : \mathcal{F}_1 \to \mathcal{F}_2$ by setting \[ f_2(\hat{F}) :=  \overline{\hat{F}}. \]



The set $\mathcal{F}^{\mathrm{all}, 2}_{\hat{T}} \supseteq \mathcal{F}_2$ of all GENEOs from $(\overline{\hat{\Phi}}, \hat{G})$ to $(\overline{\hat{\Psi}},\hat{H})$ with respect to the homomorphism $\hat{T} : \hat{G} \to \hat{H}$ is a metric space with the distance function $D^{2}_{\mathrm{GENEO}}$ given by 

\[ D^{2}_{\mathrm{GENEO}}\left(F^{\prime}, F^{\prime \prime} \right) := 
\sup_{\overline{\varphi} \in \overline{\hat{\Phi}}} D_{\overline{\hat{\Psi}}} \left(F^{\prime}(\overline{\varphi}), F^{\prime \prime}(\overline{\varphi}) \right), \ \mathrm{ for\ every\ } F^{\prime}, F^{\prime \prime} \in \mathcal{F}^{\mathrm{all},2}_{\hat{T}}. \]

\begin{proposition}\label{f2Iso}
The correspondence $f_2 : \mathcal{F}_1 \to \mathcal{F}_2$ is an isometry with respect to the distances $D^{1}_{\mathrm{GENEO}}$ and $D^{2}_{\mathrm{GENEO}}$.
\end{proposition}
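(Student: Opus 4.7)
The plan is to prove surjectivity by construction, injectivity by restriction, and the distance-preserving property by density together with Proposition \ref{SpDns}.

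First, surjectivity of $f_2$ is immediate from the definition $\mathcal{F}_2 := \{\overline{\hat{F}} : F \in \mathcal{F}\}$. For injectivity, I would use the fact that $\overline{\hat{F}}\vert_{\hat{\Phi}} = \hat{F}$, which is noted right before Proposition \ref{ClCpFNExp}. Hence, if $\hat{F}_1 \neq \hat{F}_2$ in $\mathcal{F}_1$, there is some $\hat{\varphi} \in \hat{\Phi}$ on which they disagree, and then $\overline{\hat{F}}_1(\hat{\varphi}) = \hat{F}_1(\hat{\varphi}) \neq \hat{F}_2(\hat{\varphi}) = \overline{\hat{F}}_2(\hat{\varphi})$, so $f_2(\hat{F}_1) \neq f_2(\hat{F}_2)$.

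The main step is to show $D^{2}_{\mathrm{GENEO}}(\overline{\hat{F}}_1, \overline{\hat{F}}_2) = D^{1}_{\mathrm{GENEO}}(\hat{F}_1, \hat{F}_2)$. My plan is to observe that the function
\[
h : \overline{\hat{\Phi}} \to \mathbb{R}, \qquad h(\overline{\varphi}) := D_{\overline{\hat{\Psi}}}\bigl(\overline{\hat{F}}_1(\overline{\varphi}), \overline{\hat{F}}_2(\overline{\varphi})\bigr)
\]
is continuous, since both $\overline{\hat{F}}_1$ and $\overline{\hat{F}}_2$ are non-expansive (Proposition \ref{ClCpFNExp}) and the metric $D_{\overline{\hat{\Psi}}}$ is itself continuous. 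Since $\hat{\Phi}$ is dense in the compact space $\overline{\hat{\Phi}}$ (compactness by Proposition \ref{ClCpPhComp}), Proposition \ref{SpDns} yields
\[
\sup_{\overline{\varphi} \in \overline{\hat{\Phi}}} h(\overline{\varphi}) = \sup_{\hat{\varphi} \in \hat{\Phi}} h(\hat{\varphi}).
\]
Then, using $\overline{\hat{F}}_i\vert_{\hat{\Phi}} = \hat{F}_i$ together with the fact that $D_{\overline{\hat{\Psi}}}$ agrees with $D_{\hat{\Psi}}$ on $\hat{\Psi} \times \hat{\Psi}$ (both being restrictions of the uniform distance), the right-hand side equals $\sup_{\hat{\varphi} \in \hat{\Phi}} D_{\hat{\Psi}}(\hat{F}_1(\hat{\varphi}),\hat{F}_2(\hat{\varphi})) =: D^{1}_{\mathrm{GENEO}}(\hat{F}_1,\hat{F}_2)$.

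The only mild obstacle is confirming the continuity of $h$ and the compatibility of the two ambient metrics on $\hat{\Psi}$, both of which are routine; the essential content is compactness of $\overline{\hat{\Phi}}$ (ensured by the total boundedness of $\Phi$ transported through the isometry $i_1$) so that Proposition \ref{SpDns} applies. No additional structure beyond what was already established in Subsections \ref{ExSg}--\ref{CpF} is needed, and the proof parallels closely that of Proposition \ref{f1Iso}.
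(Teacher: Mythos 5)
Your proposal is correct and follows essentially the same route as the paper's own proof: surjectivity by construction, injectivity via the restriction property $\overline{\hat{F}}\vert_{\hat{\Phi}} = \hat{F}$, and distance preservation by applying Proposition \ref{SpDns} to the continuous function $\overline{\varphi} \mapsto D_{\overline{\hat{\Psi}}}\bigl(\overline{\hat{F}}_1(\overline{\varphi}), \overline{\hat{F}}_2(\overline{\varphi})\bigr)$ on the compact space $\overline{\hat{\Phi}}$ with dense subset $\hat{\Phi}$, then using that $D_{\hat{\Psi}}$ and $D_{\overline{\hat{\Psi}}}$ are both restrictions of the uniform-norm distance. Your extra remarks on the continuity of $h$ only make explicit what the paper leaves implicit.
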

\begin{proof}
The map $f_2 : \mathcal{F}_1 \to \mathcal{F}_2$ is surjective by construction. Also, if $\hat{F}_1, \hat{F}_2 \in \mathcal{F}_1$ are distinct, i.e., there is a $\hat{\varphi} \in \hat{\Phi}$ with $\hat{F}_1(\hat{\varphi}) \neq \hat{F}_2(\hat{\varphi})$, then $f_2(\hat{F}_1)(\hat{\varphi}) = \overline{\hat{F}}_1(\hat{\varphi}) \neq \overline{\hat{F}}_2(\hat{\varphi}) = f_2(\hat{F}_2)(\hat{\varphi})$ since we respectively have $\overline{\hat{F}}_1 \vert_{\hat{\Phi}} = \hat{F}_1$ and $\overline{\hat{F}}_2 \vert_{\hat{\Phi}} = \hat{F}_2$; whence $f_2(\hat{F}_1)(\hat{\varphi}) \neq f_2(\hat{F}_2)(\hat{\varphi})$ and $f_2$ is injective as well.

If $\overline{\hat{F}}_1,\overline{\hat{F}}_2 \in \mathcal{F}_2 \subseteq \mathcal{F}^{\mathrm{all}, 2}_{\hat{T}}$, by applying Proposition \ref{SpDns} to the real-valued continuous function $f(\overline{\varphi}):=D_{\overline{\hat{\Psi}}}\left(\overline{\hat{F}}_1\left(\overline{\varphi}\right),\overline{\hat{F}}_2\left(\overline{\varphi}\right)\right)$,  we get
\begin{align*}
    D^{2}_{\mathrm{GENEO}}\left(\overline{\hat{F}}_1,\overline{\hat{F}}_2\right) & := \sup_{\overline{\varphi} \in \overline{\hat{\Phi}}} D_{\overline{\hat{\Psi}}}\left(\overline{\hat{F}}_1\left(\overline{\varphi}\right),\overline{\hat{F}}_2\left(\overline{\varphi}\right)\right) \\
    & = \sup_{\hat{\varphi} \in \hat{\Phi}} D_{\overline{\hat{\Psi}}}\left(\overline{\hat{F}}_1\left(\hat{\varphi}\right),\overline{\hat{F}}_2\left(\hat{\varphi}\right)\right) \\
    & = \sup_{\hat{\varphi} \in \hat{\Phi}} D_{\hat{\Psi}}\left(\hat{F}_1\left(\hat{\varphi}\right),\hat{F}_2\left(\hat{\varphi}\right)\right) \\
    & =: D^{1}_{\mathrm{GENEO}}\left(\hat{F}_1,\hat{F}_2\right).
\end{align*}
as $D_{\hat{\Psi}}$ and $D_{\overline{\hat{\Psi}}}$ both are restrictions of the distance induced by the uniform norm on $\mathbb{R}^{Y}_b$ to $\hat{\Psi}$ and $\overline{\hat{\Psi}}$ respectively. So, the bijection $f_2$ is an isometry.
\end{proof}

As $\overline{\hat{F}} \vert_{\hat{\Phi}} = \hat{F}$ for each $F \in \mathcal{F}$, by Proposition \ref{ComGN1} we have

\begin{proposition}\label{ComGN2}
For each $F \in \mathcal{F}$, \[ i_2 \circ F = (f_2 \circ f_1(F)) \circ i_1. \] That is, for every $\varphi\in\Phi$, \[ \widehat{F(\varphi)} = \overline{\hat{F}}(\hat{\varphi}). \]
\end{proposition}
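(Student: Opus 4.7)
The plan is straightforward: verify the identity pointwise on $\Phi$, chaining together the defining formulas $f_1(F)=\hat{F}$ and $f_2(\hat{F})=\overline{\hat{F}}$, the restriction identity $\overline{\hat{F}}\vert_{\hat{\Phi}}=\hat{F}$ recorded just before the statement, and the commutativity already proved in Proposition \ref{ComGN1}. No new ideas are required; the proposition is merely asserting that the compatibility established in Proposition \ref{ComGN1} survives passage to the second-stage extension $\overline{\hat{F}}$.

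Concretely, I would fix an arbitrary $\varphi \in \Phi$ and unpack the right-hand side. By the definitions of $f_1$ and $f_2$ we have $(f_2 \circ f_1)(F) = f_2(\hat{F}) = \overline{\hat{F}}$, and by construction $i_1(\varphi)=\hat{\varphi}$, so the right-hand side evaluated at $\varphi$ reduces to $\overline{\hat{F}}(\hat{\varphi})$. Since $\hat{\varphi} \in \hat{\Phi}$ and $\overline{\hat{F}}$ restricts to $\hat{F}$ on $\hat{\Phi}$, this collapses to $\hat{F}(\hat{\varphi})$, which by Proposition \ref{ComGN1} equals $\widehat{F(\varphi)} = i_2(F(\varphi)) = (i_2 \circ F)(\varphi)$, matching the left-hand side. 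The equality of maps then follows by the arbitrariness of $\varphi$, and the pointwise formulation $\widehat{F(\varphi)} = \overline{\hat{F}}(\hat{\varphi})$ is the same computation read in reverse.

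I do not anticipate any obstacle. The only thing one needs to notice is that the relevant test points $\hat{\varphi}=i_1(\varphi)$ always land in the dense subspace $\hat{\Phi}\subseteq\overline{\hat{\Phi}}$, where $\overline{\hat{F}}$ acts by definition as $\hat{F}$; so no continuity or density argument is required, and the result is obtained by immediate bookkeeping from the previously established commutativity.
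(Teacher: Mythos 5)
Your proof is correct and follows exactly the paper's own argument: the paper likewise deduces the claim from the restriction identity $\overline{\hat{F}}\vert_{\hat{\Phi}}=\hat{F}$ combined with Proposition \ref{ComGN1}, merely stating it more tersely. Your pointwise unpacking is just a more explicit rendering of the same bookkeeping.
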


\vspace{0.3cm}

Let us now utilize the assumption that $\{ F(\Phi) \mid F \in \mathcal{F} \}$ covers $\Psi$ to define a homomorphism $\overline{\hat{T}} : \overline{\hat{G}} \to \overline{\hat{H}}$. First we need

\begin{definition}
We say that a space $\mathcal{F} \subseteq \mathcal{F}_T^{\mathrm{all}}$ of GENEOs $(F, T) : (\Phi, G) \to (\Psi, H)$ is \textit{collectionwise surjective} if for each $\psi \in \Psi$, there exist an $F_{\psi} \in \mathcal{F}$ and a $\varphi_\psi\in\Phi$ such that $F_{\psi}(\varphi_{\psi}) = \psi$; that is, $\bigcup_{F \in \mathcal{F}} F(\Phi) = \Psi$.
\end{definition}

The key property of collectionwise surjective spaces of GENEOs is given in

\begin{theorem}\label{TnExp}
If the space $\mathcal{F} \subseteq \mathcal{F}_T^{\mathrm{all}}$ of GENEOs $(F, T) : (\Phi, G) \to (\Psi, H)$ is collectionwise surjective, then the homomorphism $T$ is non-expansive.
\end{theorem}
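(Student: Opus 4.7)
The plan is to unpack the definition of non-expansivity for $T$ and trace through the action of $H$ on $\Psi$ via the GENEOs in $\mathcal{F}$. Recall that $D_{\mathrm{Aut}}$ on $H$ can be written as $D_{\mathrm{Aut}}(h_1,h_2) = \sup_{\psi \in \Psi} D_\Psi(\psi h_1, \psi h_2)$, so the goal is to show
\[
\sup_{\psi \in \Psi} D_\Psi\bigl(\psi \circ T(g_1),\, \psi \circ T(g_2)\bigr) \;\leq\; \sup_{\varphi \in \Phi} D_\Phi(\varphi \circ g_1,\, \varphi \circ g_2)
\]
for every $g_1, g_2 \in G$.

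First I would fix arbitrary $g_1, g_2 \in G$ and an arbitrary $\psi \in \Psi$. Using the collectionwise surjectivity hypothesis, write $\psi = F_\psi(\varphi_\psi)$ for some $F_\psi \in \mathcal{F}$ and $\varphi_\psi \in \Phi$. Then invoke the $T$-equivariance of $F_\psi$ to rewrite
\[
\psi \circ T(g_i) \;=\; F_\psi(\varphi_\psi) \circ T(g_i) \;=\; F_\psi(\varphi_\psi \circ g_i), \qquad i = 1,2.
\]
Next, apply the non-expansivity of $F_\psi$ with respect to the uniform norm to obtain
\[
D_\Psi\bigl(\psi \circ T(g_1), \psi \circ T(g_2)\bigr) \;=\; D_\Psi\bigl(F_\psi(\varphi_\psi g_1), F_\psi(\varphi_\psi g_2)\bigr) \;\leq\; D_\Phi(\varphi_\psi g_1, \varphi_\psi g_2).
\]
Since $\varphi_\psi \in \Phi$, the right-hand side is bounded above by $\sup_{\varphi \in \Phi} D_\Phi(\varphi g_1, \varphi g_2) = D_{\mathrm{Aut}}(g_1, g_2)$. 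This bound is independent of $\psi$, so taking the supremum over $\psi \in \Psi$ on the left yields $D_{\mathrm{Aut}}(T(g_1), T(g_2)) \leq D_{\mathrm{Aut}}(g_1, g_2)$, which is the desired non-expansivity.

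There is essentially no obstacle here: the argument is a short chain of substitutions, and the only nontrivial ingredient beyond the definitions is the covering hypothesis, which is used precisely once to replace the arbitrary $\psi$ by an image of some $\varphi_\psi$ under some $F_\psi \in \mathcal{F}$. The collectionwise surjectivity assumption is exactly what is needed so that every element of $\Psi$ can be moved back into $\Phi$ where the non-expansivity of an individual GENEO controls the distance.
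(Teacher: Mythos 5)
Your proof is correct and follows exactly the same chain as the paper's: use collectionwise surjectivity to write $\psi = F_\psi(\varphi_\psi)$, apply $T$-equivariance to pull $T(g_i)$ inside $F_\psi$, apply non-expansivity of $F_\psi$, and bound by the supremum over $\Phi$. No differences worth noting.
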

\begin{proof}
Let $a, b \in G$; then as $\mathcal{F}$ is collectionwise surjective and each $F \in \mathcal{F}$ is a GENEO, we have
\begin{align*}
    D_{\mathrm{Aut}}(T(a), T(b)) & := \sup_{\psi \in \Psi} D_{\Psi}(\psi T(a), \psi T(b)) \\
    & = \sup_{\psi \in \Psi} D_{\Psi}(F_{\psi}(\varphi_{\psi}) T(a), F_{\psi}(\varphi_{\psi}) T(b)) \\
    & = \sup_{\psi \in \Psi} D_{\Psi}( F_{\psi}(\varphi_{\psi} a), F_{\psi}(\varphi_{\psi} b) ) \\
    & \leq \sup_{\psi \in \Psi} D_{\Phi}( \varphi_{\psi} a, \varphi_{\psi} b ) \\
    & \leq \sup_{\varphi \in \Phi} D_{\Phi}( \varphi a, \varphi b ) \\
    & = D_{\mathrm{Aut}}(a, b).
\end{align*}
\end{proof}

For the rest of this section, the spaces $\mathcal{F} \subseteq \mathcal{F}_T^{\mathrm{all}}$ will be assumed to be collectionwise surjective.
Clearly, $\mathcal{F}_1 := \{ \hat{F} : \hat{\Phi} \to \hat{\Psi} \mid F \in \mathcal{F} \}$ is collectionwise surjective whenever $\mathcal{F}$ is so, since $\hat F_\psi(\hat\varphi_\psi)=\widehat{F_\psi(\varphi_\psi)}=\hat\psi$ for every $\psi\in\Psi$. 

Theorem \ref{TnExp} implies

\begin{corollary}\label{CpTNExp}
The homomorphism $\hat{T} : \hat{G} \to \hat{H}$ is non-expansive.
\end{corollary}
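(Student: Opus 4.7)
The plan is to deduce this immediately from Theorem \ref{TnExp} by applying that theorem with $(\hat{\Phi}, \hat{G})$, $(\hat{\Psi}, \hat{H})$, and $\mathcal{F}_1$ playing the roles of $(\Phi, G)$, $(\Psi, H)$, and $\mathcal{F}$. So the task reduces to checking that the hypotheses of Theorem \ref{TnExp} are met in this new setting.

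First I would recall that $\mathcal{F}_1 \subseteq \mathcal{F}^{\mathrm{all},1}_{\hat T}$, i.e.\ every element of $\mathcal{F}_1$ is a GENEO from $(\hat{\Phi}, \hat{G})$ to $(\hat{\Psi}, \hat{H})$ with respect to $\hat T$. This is precisely the content of Proposition \ref{CpFGO}, which guarantees both the $\hat T$-equivariance and the non-expansivity of each $\hat F$, as well as the fact that $\hat T$ itself is a group homomorphism. So the ambient hypothesis of Theorem \ref{TnExp} (a space of GENEOs relative to a group homomorphism between perception pairs) is in place.

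Next I would verify that $\mathcal{F}_1$ is collectionwise surjective. This is the observation made just before the corollary, which I would write out once explicitly: by definition $\hat{\Psi} = \{\hat\psi \mid \psi \in \Psi\}$, so given any $\hat\psi \in \hat\Psi$ there is a corresponding $\psi \in \Psi$; by the collectionwise surjectivity of $\mathcal{F}$ we can pick $F_\psi \in \mathcal{F}$ and $\varphi_\psi \in \Phi$ with $F_\psi(\varphi_\psi) = \psi$; then the defining equation $\hat F(\hat\varphi) := \widehat{F(\varphi)}$ from Subsubsection \ref{CpF} yields
\[
\hat F_\psi(\hat\varphi_\psi) \;=\; \widehat{F_\psi(\varphi_\psi)} \;=\; \hat\psi,
\]
so $\bigcup_{\hat F \in \mathcal{F}_1} \hat F(\hat\Phi) = \hat\Psi$.

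With both hypotheses verified, Theorem \ref{TnExp}, applied to the collectionwise surjective space $\mathcal{F}_1$ of GENEOs $(\hat F, \hat T) : (\hat\Phi, \hat G) \to (\hat\Psi, \hat H)$, gives that $\hat T : \hat G \to \hat H$ is non-expansive. There is no real obstacle here: the only thing to be careful about is that ``collectionwise surjective'' in the new setting really does mean covering the extended data set $\hat{\Psi}$ (not merely the isometric image of $\Psi$ inside it), which is why the definition of $\hat{\Psi}$ as the set of all $\hat\psi$ for $\psi \in \Psi$ is crucial.
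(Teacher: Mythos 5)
Your proof is correct and follows exactly the route the paper takes: the paper derives this corollary from Theorem \ref{TnExp} after noting (in the sentence immediately preceding the corollary) that $\mathcal{F}_1$ is collectionwise surjective because $\hat F_\psi(\hat\varphi_\psi)=\widehat{F_\psi(\varphi_\psi)}=\hat\psi$ for every $\psi\in\Psi$. You have merely made explicit the verification that the hypotheses of Theorem \ref{TnExp} hold for $(\hat\Phi,\hat G)$, $(\hat\Psi,\hat H)$, and $\mathcal{F}_1$, which the paper leaves implicit.
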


Corollary \ref{CpTNExp} allows us to define a map $\overline{\hat{T}} : \overline{\hat{G}} \to \overline{\hat{H}}$ unambiguously: Let $\overline{g} \in \overline{\hat{G}}$ and $(\hat{g}_n)_{n \in \mathbb{N}}$ be a sequence in $\hat{G}$ that converges to $\overline{g}$ in $\overline{\hat{G}}$. 
As $\hat{T}$ is non-expansive and $\overline{\hat{H}}$ is a complete metric space, the sequence  $(\hat{T}(\hat{g}_n))_{n \in \mathbb{N}}$ in $\hat H$  converges to a unique element $\overline{h} \in \overline{\hat{H}}$. We put \[ \overline{\hat{T}}(\overline{g}) := \overline{h}. \] 
That is, \[ \overline{\hat{T}}\left(\lim_{n \to \infty} \hat{g}_n\right) := \lim_{n \to \infty} \hat{T}(\hat{g}_n). \]

Note that $\overline{\hat{T}} \vert_{\hat{G}} = \hat{T}$. So, the commutativity condition $k_2 \circ T = \hat{T} \circ k_1$ (i.e.,\  $\widehat{T(g)}=\hat T(\hat g)\mathrm{\ for\ every\ } g\in G$) in Proposition \ref{ComGN1} can be rephrased as

\begin{proposition}\label{ComGNT}

$k_2 \circ T = \overline{\hat{T}} \circ k_1  \mathrm{\ (i.e.,\ } \widehat{T(g)}=\overline{\hat{T}}(\hat g)\mathrm{\ for\ every\ } g\in G\mathrm{). }
$
\end{proposition}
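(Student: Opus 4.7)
The plan is to observe that this proposition is essentially a trivial rephrasing of the corresponding statement in Proposition \ref{ComGN1}, relying solely on the fact that $\overline{\hat{T}}$ was constructed as an extension of $\hat{T}$. Specifically, for any $g \in G$, we have $k_1(g) = \hat{g} \in \hat{G}$ by definition, and since $\overline{\hat{T}}\vert_{\hat{G}} = \hat{T}$ (as noted right before the statement), the value $\overline{\hat{T}}(\hat{g})$ coincides with $\hat{T}(\hat{g})$.

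The actual verification is then a one-line chain: for each $g \in G$,
\[
\overline{\hat{T}} \circ k_1 (g) = \overline{\hat{T}}(\hat{g}) = \hat{T}(\hat{g}) = \widehat{T(g)} = k_2(T(g)) = k_2 \circ T(g),
\]
where the second equality uses $\overline{\hat{T}}\vert_{\hat{G}} = \hat{T}$, the third equality is the defining property $\hat{T}(\hat{g}) := \widehat{T(g)}$ from Subsubsection \ref{CpF}, and the fourth uses the definition of $k_2$. By the arbitrariness of $g \in G$, the equality of maps $k_2 \circ T = \overline{\hat{T}} \circ k_1$ holds.

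There is no real obstacle here; the only subtle point worth flagging is that one must check $\overline{\hat{T}}$ is genuinely well defined on the subset $\hat{G} \subseteq \overline{\hat{G}}$ and agrees with $\hat{T}$ there. This was already justified in the preceding paragraph by choosing the eventually constant sequence $\hat{g}_n = \hat{g}$ in the defining limit $\overline{\hat{T}}(\lim_n \hat{g}_n) := \lim_n \hat{T}(\hat{g}_n)$, so the restriction property $\overline{\hat{T}}\vert_{\hat{G}} = \hat{T}$ is immediate, and the proposition follows directly from Proposition \ref{ComGN1}.
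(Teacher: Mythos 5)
Your proposal is correct and matches the paper's own argument, which likewise derives the statement by noting $\overline{\hat{T}}\vert_{\hat{G}} = \hat{T}$ and rephrasing the commutativity condition $k_2 \circ T = \hat{T} \circ k_1$ from Proposition \ref{ComGN1}. Your explicit chain of equalities and the remark about constant sequences merely spell out what the paper leaves implicit.
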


We observe that the map $\overline{\hat{T}} : \overline{\hat{G}} \to \overline{\hat{H}}$
preserves the group structure:
\begin{theorem}\label{thmgrouphom}
The function $\overline{\hat{T}} : \overline{\hat{G}} \to \overline{\hat{H}}$ is 
a group homomorphism.
\end{theorem}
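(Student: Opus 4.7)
The plan is a standard density-plus-continuity argument: the map $\overline{\hat{T}}$ extends the homomorphism $\hat{T}$ to closures, and since the ambient spaces $\overline{\hat{G}}$ and $\overline{\hat{H}}$ are topological groups (Proposition \ref{ClCpGTpGp}), the homomorphism property passes to the limit.

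Concretely, I would fix $\overline{g}_1, \overline{g}_2 \in \overline{\hat{G}}$ and pick sequences $(\hat{g}_{1,n})_{n\in\mathbb{N}}$ and $(\hat{g}_{2,n})_{n\in\mathbb{N}}$ in $\hat{G}$ with $\hat{g}_{i,n} \to \overline{g}_i$ in $\overline{\hat{G}}$. Since $\hat{G}$ is a subgroup of $\mathrm{Aut}_{\hat{\Phi}}(\hat{X})$ by Corollary \ref{IndBjGp}, the products $\hat{g}_{1,n}\hat{g}_{2,n}$ lie in $\hat{G}$; and since $\overline{\hat{G}}$ is a topological group (Proposition \ref{ClCpGTpGp}), multiplication is jointly continuous there, so $\hat{g}_{1,n}\hat{g}_{2,n} \to \overline{g}_1 \overline{g}_2$ in $\overline{\hat{G}}$. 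By the very definition of $\overline{\hat{T}}$, this gives
\begin{equation*}
\overline{\hat{T}}(\overline{g}_1 \overline{g}_2) \;=\; \lim_{n \to \infty} \hat{T}(\hat{g}_{1,n}\hat{g}_{2,n}).
\end{equation*}

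Because $\hat{T} : \hat{G} \to \hat{H}$ is already known to be a group homomorphism (this was verified at the start of the proof of Proposition \ref{CpFGO}), we have $\hat{T}(\hat{g}_{1,n}\hat{g}_{2,n}) = \hat{T}(\hat{g}_{1,n})\hat{T}(\hat{g}_{2,n})$. Invoking the definition of $\overline{\hat{T}}$ once more, $\hat{T}(\hat{g}_{i,n}) \to \overline{\hat{T}}(\overline{g}_i)$ in $\overline{\hat{H}}$ for $i=1,2$; and because $\overline{\hat{H}}$ is also a topological group (again by Proposition \ref{ClCpGTpGp} applied to $(\Psi,H)$), multiplication in $\overline{\hat{H}}$ is continuous, so
\begin{equation*}
\lim_{n \to \infty} \hat{T}(\hat{g}_{1,n})\hat{T}(\hat{g}_{2,n}) \;=\; \overline{\hat{T}}(\overline{g}_1)\, \overline{\hat{T}}(\overline{g}_2).
\end{equation*}
Combining the two displayed equalities yields $\overline{\hat{T}}(\overline{g}_1 \overline{g}_2) = \overline{\hat{T}}(\overline{g}_1) \overline{\hat{T}}(\overline{g}_2)$, which is the required homomorphism property.

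The only points that need any care are the well-definedness of the limits (guaranteed by the non-expansivity of $\hat{T}$ from Corollary \ref{CpTNExp} together with completeness of $\overline{\hat{H}}$, already exploited in defining $\overline{\hat{T}}$) and the fact that continuity of multiplication is available on both sides; neither is really an obstacle, since Proposition \ref{ClCpGTpGp} hands us the topological group structure on both $\overline{\hat{G}}$ and $\overline{\hat{H}}$ free of charge. If one also wants to record that $\overline{\hat{T}}$ preserves the identity and inverses, these follow by the same limit argument, using $\hat{T}(\mathrm{id}_{\hat{X}}) = \mathrm{id}_{\hat{Y}}$ and continuity of inversion in the topological group $\overline{\hat{H}}$.
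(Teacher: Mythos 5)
Your proposal is correct and follows essentially the same route as the paper: both arguments fix approximating sequences in $\hat{G}$, use continuity of composition in the ambient topological groups to pass the product through the limits, invoke the definition of $\overline{\hat{T}}$ and the homomorphism property of $\hat{T}$, and then reassemble. The only cosmetic difference is that you cite Proposition \ref{ClCpGTpGp} for the topological group structure while the paper appeals directly to Theorem \ref{AutTopGp}; these amount to the same tool.
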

\begin{proof}
Let $\overline{a}, \overline{b} \in \overline{\hat{G}}$, and $(\hat{a}_n)_{n \in \mathbb{N}}, (\hat{b}_n)_{n \in \mathbb{N}}$ be sequences in $\hat{G}$ converging respectively to $\overline{a}, \overline{b}$ in $\overline{\hat{G}}$.
%
%
%
Recalling the continuity of the composition of functions on $\mathrm{Aut}_{\hat{\Phi}}(\hat{X})$ and $\mathrm{Aut}_{\hat{\Psi}}(\hat{Y})$ (Theorem \ref{AutTopGp}) and the definition of  $\overline{\hat{T}}$, we compute
\begin{align*}
    \overline{\hat{T}}\left(\overline{a} \overline{b}\right) & = \overline{\hat{T}}\left(\lim_{n \to \infty} \hat{a}_n \lim_{n \to \infty} \hat{b}_n\right) \\
    & = \overline{\hat{T}}\left(\lim_{n \to \infty} \hat{a}_n   \hat{b}_n\right) \\
    & = \lim_{n \to \infty} \hat{T}(\hat{a}_n \hat{b}_n) \\
    & = \lim_{n \to \infty} \hat{T}(\hat{a}_n) \hat{T}(\hat{b}_n) \\
    & = \lim_{n \to \infty} \hat{T}(\hat{a}_n) \lim_{n \to \infty} \hat{T}(\hat{b}_n) \\
    & = \overline{\hat{T}}\left(\lim_{n \to \infty} \hat{a}_n\right) \overline{\hat{T}}\left(\lim_{n \to \infty} \hat{b}_n\right) \\
    & = \overline{\hat{T}}(\overline{a}) \overline{\hat{T}}(\overline{b}).
\end{align*}
Therefore, 
$\overline{\hat{T}}$ is a group homomorphism.
\end{proof}

The next claim allows us to pass from $\hat{T}-$equivariance to $\overline{\hat{T}}-$equivariance.

\begin{theorem}\label{thmextequiv}
Every GENEO $\overline{\hat{F}} \in \mathcal{F}_2 \subseteq \mathcal{F}^{\mathrm{all}, 2}_{\hat{T}}$ is $\overline{\hat{T}}-$equivariant as well. 
Hence $\left(\overline{\hat{F}}, \overline{\hat{T}}\right) : \left(\overline{\hat{\Phi}}, \overline{\hat{G}}\right) \to \left(\overline{\hat{\Psi}}, \overline{\hat{H}}\right)$ is a GENEO for each $F \in \mathcal{F}$.
\end{theorem}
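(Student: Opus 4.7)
The plan is to prove $\overline{\hat{T}}$-equivariance by a density/continuity argument, passing to the limit in the $\hat{T}$-equivariance relation already established for $\hat{F}$ in Proposition \ref{CpFGO}. Fix $\overline{\varphi} \in \overline{\hat{\Phi}}$ and $\overline{g} \in \overline{\hat{G}}$, and choose sequences $(\hat{\varphi}_n)_{n \in \mathbb{N}}$ in $\hat{\Phi}$ and $(\hat{g}_n)_{n \in \mathbb{N}}$ in $\hat{G}$ with $\hat{\varphi}_n \to \overline{\varphi}$ in $\overline{\hat{\Phi}}$ and $\hat{g}_n \to \overline{g}$ in $\overline{\hat{G}} \subseteq \mathrm{Aut}_{\overline{\hat{\Phi}}}(\hat{X})$ (the latter inclusion by Corollary \ref{ClCpsbstClCp}). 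The goal is to show
\[
\overline{\hat{F}}(\overline{\varphi} \circ \overline{g}) = \overline{\hat{F}}(\overline{\varphi}) \circ \overline{\hat{T}}(\overline{g}).
\]

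First I would evaluate the left-hand side. By continuity of the action $\overline{\hat{\Phi}} \times \mathrm{Aut}_{\overline{\hat{\Phi}}}(\hat{X}) \to \overline{\hat{\Phi}}$ (Theorem \ref{AutTopGp} applied to the perception pair $(\overline{\hat{\Phi}}, \overline{\hat{G}})$, which is well defined by Corollary \ref{ClCpsbstClCp}), we get $\hat{\varphi}_n \circ \hat{g}_n \to \overline{\varphi} \circ \overline{g}$ in $\overline{\hat{\Phi}}$. Since $\hat{g}_n \in \hat{G}$ implies $\hat{\varphi}_n \circ \hat{g}_n \in \hat{\Phi}$, and $\overline{\hat{F}}$ is non-expansive (Proposition \ref{ClCpFNExp}) and agrees with $\hat{F}$ on $\hat{\Phi}$, we have
\[
\overline{\hat{F}}(\overline{\varphi} \circ \overline{g}) = \lim_{n \to \infty} \overline{\hat{F}}(\hat{\varphi}_n \circ \hat{g}_n) = \lim_{n \to \infty} \hat{F}(\hat{\varphi}_n \circ \hat{g}_n) = \lim_{n \to \infty} \bigl(\hat{F}(\hat{\varphi}_n) \circ \hat{T}(\hat{g}_n)\bigr),
\]
where the last step uses the $\hat{T}$-equivariance of $\hat{F}$ (Proposition \ref{CpFGO}).

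For the right-hand side, the definitions of $\overline{\hat{F}}$ and $\overline{\hat{T}}$ give $\hat{F}(\hat{\varphi}_n) \to \overline{\hat{F}}(\overline{\varphi})$ in $\overline{\hat{\Psi}}$ and $\hat{T}(\hat{g}_n) \to \overline{\hat{T}}(\overline{g})$ in $\overline{\hat{H}}$. Applying the analogue for $(\overline{\hat{\Psi}}, \overline{\hat{H}})$ of the continuity of the action (Theorem \ref{AutTopGp} combined with the counterpart of Corollary \ref{ClCpsbstClCp} for the target perception pair), the composition map is jointly continuous, so
\[
\lim_{n \to \infty} \bigl(\hat{F}(\hat{\varphi}_n) \circ \hat{T}(\hat{g}_n)\bigr) = \overline{\hat{F}}(\overline{\varphi}) \circ \overline{\hat{T}}(\overline{g}).
\]
Chaining the two limits yields the desired equivariance. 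Combined with Proposition \ref{ClCpFNExp}, this shows $(\overline{\hat{F}}, \overline{\hat{T}}) : (\overline{\hat{\Phi}}, \overline{\hat{G}}) \to (\overline{\hat{\Psi}}, \overline{\hat{H}})$ is a GENEO.

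The main obstacle is the joint continuity of the composition $(\psi, h) \mapsto \psi \circ h$ on the compact perception pair $(\overline{\hat{\Psi}}, \overline{\hat{H}})$: we have to justify that $\overline{\hat{T}}(\overline{g}) \in \overline{\hat{H}} \subseteq \mathrm{Aut}_{\overline{\hat{\Psi}}}(\hat{Y})$ (so the composition is a legitimate element of $\overline{\hat{\Psi}}$) and that the limits in the two variables may be taken simultaneously. Both follow by transporting the constructions of Subsection \ref{subSectEPPT} (in particular Corollary \ref{ClCpsbstClCp} and Theorem \ref{AutTopGp}) to the codomain perception pair, but the verification has to be made explicit before the limit exchange above is licensed.
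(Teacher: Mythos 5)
Your argument is correct and follows essentially the same route as the paper's proof: a passage to the limit along a sequence in $\hat{G}$ converging to $\overline{g}$, justified by the continuity of the actions from Theorem \ref{AutTopGp}, the definition of $\overline{\hat{T}}$, and the fact that $\overline{\hat{T}}(\overline{g}) \in \overline{\hat{H}} \subseteq \mathrm{Aut}_{\overline{\hat{\Psi}}}(\hat{Y})$ via the codomain analogue of Corollary \ref{ClCpsbstClCp}, exactly as you flag at the end. The only difference is that the paper starts from the $\hat{T}$-equivariance of $\overline{\hat{F}}$ on all of $\overline{\hat{\Phi}}$ (Proposition \ref{ClCpFNExp}), so it keeps $\overline{\varphi}$ fixed and needs only the single limit in $\overline{g}$, whereas you re-derive that equivariance from Proposition \ref{CpFGO} and therefore approximate $\overline{\varphi}$ as well --- a harmless redundancy.
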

\begin{proof}
Let $\overline{\varphi} \in \overline{\hat{\Phi}}, \overline{g} \in \overline{\hat{G}}$, and $(\hat{g}_n)_{n \in \mathbb{N}}$ be a sequence in $\hat{G}$ converging to $\overline{g}$.
Recalling the fact that $\overline{\hat{F}}$ is a GENEO for $\hat T$  (and in particular a non-expansive, and hence continuous, map) by Proposition \ref{ClCpFNExp}, the continuity of the actions of $\mathrm{Aut}_{\overline{\hat{\Phi}}}(\hat{X})$ and $\mathrm{Aut}_{\overline{\hat{\Psi}}}(\hat{Y})$ respectively on $\overline{\hat{\Phi}}$ and $\overline{\hat{\Psi}}$ (Theorem \ref{AutTopGp}), and the definition of  $\overline{\hat{T}}$, we compute
\begin{align*}
    \overline{\hat{F}}\left( \bar{\varphi}  \bar{g}\right) & = \overline{\hat{F}}\left(\overline{\varphi} \lim_{n \to \infty} \hat{g}_n \right) \\
    & = \overline{\hat{F}}\left(\lim_{n \to \infty} \overline{\varphi} \hat{g}_n \right) \\
    & = \lim_{n \to \infty} \overline{\hat{F}}\left(\overline{\varphi} \hat{g}_n\right) \\
    & = \lim_{n \to \infty} \overline{\hat{F}}(\overline{\varphi}) \hat{T}(\hat{g}_n) \\
    & = \overline{\hat{F}}(\overline{\varphi}) \lim_{n \to \infty} \hat{T}(\hat{g}_n) \\
    & = \overline{\hat{F}}(\overline{\varphi}) \overline{\hat{T}}\left(\lim_{n \to \infty} \hat{g}_n\right) \\
    & = \overline{\hat{F}}(\overline{\varphi}) \overline{\hat{T}}(\overline{g}).
\end{align*}
\end{proof}

Because of Theorems \ref{thmgrouphom} and \ref{thmextequiv}, we can now consider 
$\mathcal{F}_2$ as a set of GENEOs
from $\left(\overline{\hat{\Phi}},\overline{\hat{G}}\right)$ to $\left(\overline{\hat{\Psi}},\overline{\hat{H}}\right)$ with respect to $\overline{\hat T}$, and denote this set by $\mathcal{F}_3$ to make clear that we are taking  the homomorphism 
$\overline{\hat T}$ instead of ${\hat T}$.



The set $\mathcal{F}^{\mathrm{all}}_{\overline{\hat{T}}} \supseteq \mathcal{F}_3$ of all GENEOs from  $(\overline{\hat{\Phi}},\overline{\hat{G}})$ to $(\overline{\hat{\Psi}},\overline{\hat{H}})$ with respect to $\overline{\hat T}$ is a metric space with the distance function  $D^{3}_{\mathrm{GENEO}}$ given by \[ D^{3}_{\mathrm{GENEO}}(F', F'') := \sup_{\overline{\varphi} \in \overline{\hat{\Phi}}} D_{\overline{\hat{\Psi}}}(F'(\overline{\varphi}), F''(\overline{\varphi})), \ F', F'' \in \mathcal{F}^{\mathrm{all}}_{\overline{\hat{T}}}. \] Moreover, since the data sets $\overline{\hat{\Phi}}$ and $\overline{\hat{\Psi}}$ are compact, the space $(\mathcal{F}^{\mathrm{all}}_{\overline{\hat{T}}}, D^{3}_{\mathrm{GENEO}})$ is compact as well \cite[Theorem 7]{MaIntPa}. Consequently,

\begin{proposition}\label{TdHtFComp}
The closure $\mathrm{cl}(\mathcal{F}_3)$ of $\mathcal{F}_3 \subseteq \mathcal{F}^{\mathrm{all}}_{\overline{\hat{T}}}$ in the compact space $\mathcal{F}_{\overline{\hat{T}}}^{\mathrm{all}}$ is compact.
\end{proposition}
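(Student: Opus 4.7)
The plan is to observe that this proposition follows almost immediately by combining results already established in the excerpt, since a closed subset of a compact metric space is itself compact. The statement is essentially the climax of the chain of constructions, and the real work has already been done in setting up the compact ambient space $\mathcal{F}^{\mathrm{all}}_{\overline{\hat{T}}}$.

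First, I would recall that by Proposition \ref{ClCpPhComp} applied to both perception pairs $(\Phi,G)$ and $(\Psi,H)$, the spaces $\overline{\hat{\Phi}}$ and $\overline{\hat{\Psi}}$ are both compact metric subspaces of $\mathbb{R}^{\hat{X}}_b$ and $\mathbb{R}^{\hat{Y}}_b$ respectively. Next, I would apply Theorem \ref{SpGNComp} to the perception pairs $(\overline{\hat{\Phi}},\overline{\hat{G}})$ and $(\overline{\hat{\Psi}},\overline{\hat{H}})$ with respect to the homomorphism $\overline{\hat{T}} : \overline{\hat{G}} \to \overline{\hat{H}}$ (whose existence as a group homomorphism is guaranteed by Theorem \ref{thmgrouphom}); since the relevant data sets are compact, Theorem \ref{SpGNComp} yields that the metric space $(\mathcal{F}^{\mathrm{all}}_{\overline{\hat{T}}}, D^{3}_{\mathrm{GENEO}})$ is compact. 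This step was already noted in the paragraph preceding the statement, so I would simply cite it.

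Finally, since $\mathrm{cl}(\mathcal{F}_3)$ is, by its very definition, a closed subset of $\mathcal{F}^{\mathrm{all}}_{\overline{\hat{T}}}$, and since closed subsets of compact metric spaces are compact, the proposition follows. The argument has no real obstacle; the only subtlety is making sure that the ambient space in which we are taking the closure is indeed the compact metric space furnished by Theorem \ref{SpGNComp}, which is automatic from how $\mathcal{F}_3$ was defined (as a collection of GENEOs between $(\overline{\hat{\Phi}},\overline{\hat{G}})$ and $(\overline{\hat{\Psi}},\overline{\hat{H}})$ with respect to $\overline{\hat{T}}$, as established in Theorems \ref{thmgrouphom} and \ref{thmextequiv}). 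The whole proof is one or two lines.
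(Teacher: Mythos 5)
Your proposal is correct and follows exactly the paper's own route: the paper establishes compactness of $(\mathcal{F}^{\mathrm{all}}_{\overline{\hat{T}}}, D^{3}_{\mathrm{GENEO}})$ in the sentence immediately preceding the proposition (via compactness of $\overline{\hat{\Phi}}$ and $\overline{\hat{\Psi}}$ and Theorem \ref{SpGNComp}), and then concludes by noting that a closed subset of a compact space is compact. There is nothing to add.
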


As the definitions of $D^{2}_{\mathrm{GENEO}}$ and $D^{3}_{\mathrm{GENEO}}$ do not depend on the reference homomorphisms $\hat{T}$ and $\overline{\hat{T}}$ respectively, we observe that the identity from $\mathcal{F}_2$ to $\mathcal{F}_3$ is an isometry.


Propositions \ref{f1Iso} and \ref{f2Iso} together give

\begin{proposition}\label{ComGN4}
The correspondence $f : \mathcal{F} \to \mathcal{F}_3$ given by \[ f := f_2 \circ f_1 \] is an isometry.
\end{proposition}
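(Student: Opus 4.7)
The plan is to assemble the proposition directly from the three pieces already in place: the isometric nature of $f_1$, the isometric nature of $f_2$, and the observation that $\mathcal{F}_2$ and $\mathcal{F}_3$ coincide as metric spaces.

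First, I would invoke Proposition \ref{f1Iso}, which gives that $f_1 \colon \mathcal{F} \to \mathcal{F}_1$ is a bijection satisfying $D^{1}_{\mathrm{GENEO}}(f_1(F_1),f_1(F_2)) = D_{\mathrm{GENEO}}(F_1,F_2)$ for all $F_1,F_2 \in \mathcal{F}$. Next, I would invoke Proposition \ref{f2Iso} to obtain that $f_2 \colon \mathcal{F}_1 \to \mathcal{F}_2$ is a bijection satisfying $D^{2}_{\mathrm{GENEO}}(f_2(\hat F_1),f_2(\hat F_2)) = D^{1}_{\mathrm{GENEO}}(\hat F_1,\hat F_2)$.

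Then I would address the point that the codomain of $f$ is stated to be $\mathcal{F}_3$ rather than $\mathcal{F}_2$: as observed in the paragraph preceding the proposition, $\mathcal{F}_3$ is merely $\mathcal{F}_2$ viewed as a set of GENEOs with respect to $\overline{\hat T}$ in place of $\hat T$, and the distance functions $D^{2}_{\mathrm{GENEO}}$ and $D^{3}_{\mathrm{GENEO}}$ have identical defining formulas, depending only on $\overline{\hat{\Phi}}$ and $\overline{\hat{\Psi}}$. Consequently, the identity map $\mathcal{F}_2 \to \mathcal{F}_3$ is an isometric bijection.

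Finally, composing these three isometric bijections, I would chain the equalities
\begin{align*}
D^{3}_{\mathrm{GENEO}}(f(F_1), f(F_2)) &= D^{2}_{\mathrm{GENEO}}(f_2(f_1(F_1)), f_2(f_1(F_2))) \\
&= D^{1}_{\mathrm{GENEO}}(f_1(F_1), f_1(F_2)) \\
&= D_{\mathrm{GENEO}}(F_1, F_2),
\end{align*}
for every $F_1, F_2 \in \mathcal{F}$, which yields the claim. There is no real obstacle here; the proposition is essentially a bookkeeping consequence of the work already done, the only subtle point being the identification of $\mathcal{F}_2$ with $\mathcal{F}_3$ as metric spaces, which has already been justified in the text immediately before the statement.
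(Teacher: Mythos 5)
Your proposal is correct and matches the paper's own argument, which simply cites Propositions \ref{f1Iso} and \ref{f2Iso} after noting (in the preceding paragraph) that the identity from $\mathcal{F}_2$ to $\mathcal{F}_3$ is an isometry because $D^{2}_{\mathrm{GENEO}}$ and $D^{3}_{\mathrm{GENEO}}$ have the same defining formula. You merely write out the composition of equalities explicitly, which the paper leaves implicit.
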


Therefore,  we can  rephrase Proposition \ref{ComGN2} as

\begin{proposition}\label{ComGNF}
For each $F \in \mathcal{F}$, \[ i_2 \circ F = f(F) \circ i_1
\mathrm{\ (i.e.,\ } \widehat{F(\varphi)}=\overline{\hat F}(\hat\varphi)\mathrm{\ for\ every\ } \varphi\in \Phi\mathrm{).} \]
\end{proposition}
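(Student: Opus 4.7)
My plan is to reduce this statement to the already-established commutativity Proposition \ref{ComGN1} together with the defining property $\overline{\hat{F}}|_{\hat{\Phi}} = \hat{F}$ of the extension $f_2(\hat{F}) = \overline{\hat{F}}$. Since $f := f_2 \circ f_1$ by definition, the claim $i_2 \circ F = f(F) \circ i_1$ becomes $i_2 \circ F = f_2(f_1(F)) \circ i_1 = \overline{\hat{F}} \circ i_1$, and evaluating at an arbitrary $\varphi \in \Phi$ amounts to showing $\widehat{F(\varphi)} = \overline{\hat{F}}(\hat{\varphi})$.

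The first step is to invoke Proposition \ref{ComGN1}, which already records that $i_2 \circ F = f_1(F) \circ i_1$; equivalently, $\widehat{F(\varphi)} = \hat{F}(\hat{\varphi})$ for every $\varphi \in \Phi$. The second step is to use the construction of $\overline{\hat{F}}$ in Section \ref{CompF}: since $\overline{\hat{F}}$ is defined as the unique continuous extension of $\hat{F}$ from $\hat{\Phi}$ to its closure $\overline{\hat{\Phi}}$ (via the limit of $\hat{F}(\hat{\varphi}_n)$ for sequences in $\hat{\Phi}$), the identity $\overline{\hat{F}}|_{\hat{\Phi}} = \hat{F}$ holds by construction. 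In particular, for $\hat{\varphi} \in \hat{\Phi}$ we obtain $\overline{\hat{F}}(\hat{\varphi}) = \hat{F}(\hat{\varphi})$.

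Combining these two steps gives, for every $\varphi \in \Phi$,
\[
(f(F) \circ i_1)(\varphi) = \overline{\hat{F}}(\hat{\varphi}) = \hat{F}(\hat{\varphi}) = \widehat{F(\varphi)} = (i_2 \circ F)(\varphi),
\]
which is precisely the claimed commutativity. There is no real obstacle here: the statement is essentially Proposition \ref{ComGN2} rephrased through the isometry $f = f_2 \circ f_1$ of Proposition \ref{ComGN4}, so the only thing to verify is that composing the two commutativity diagrams (the one at level $(\hat{\Phi},\hat{\Psi})$ from Proposition \ref{ComGN1} and the trivial one expressing $\overline{\hat{F}}|_{\hat{\Phi}} = \hat{F}$) yields the desired identity on $\Phi$. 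The proof is therefore essentially a one-line composition, with the two referenced facts doing all the work.
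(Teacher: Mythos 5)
Your proof is correct and matches the paper's own route: the paper derives Proposition \ref{ComGN2} from Proposition \ref{ComGN1} together with the restriction identity $\overline{\hat{F}}\vert_{\hat{\Phi}} = \hat{F}$, and then states Proposition \ref{ComGNF} as a rephrasing of Proposition \ref{ComGN2} via $f = f_2 \circ f_1$. Your composition of the two commutativity facts is exactly this argument.
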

 
We can now state the main result in this paper by introducing the following definition:

\begin{definition}
A compact space $\mathcal{F}^* \subseteq \mathcal{F}_{T^*}^{\mathrm{all}}$ of GENEOs $(F^*, T^*) : (\Phi^*, G^*) \to (\Psi^*, H^*)$ with $\mathrm{dom}(\Phi^*) = X^*$ and $\mathrm{dom}(\Psi^*) = Y^*$ is said to be a \textit{compactification} of a space $\mathcal{F} \subseteq \mathcal{F}_{T}^{\mathrm{all}}$ of GENEOs $(F, T) : (\Phi, G) \to (\Psi, H)$ with $\mathrm{dom}(\Phi) = X$ and $\mathrm{dom}(\Psi) = Y$, if the perception pairs $(\Phi^*, G^*)$ and $(\Phi^*, G^*)$ are compactifications of $(\Phi, G)$ and $(\Psi, H)$ respectively, and there is an isometric embedding $f$ of $\mathcal{F}$ in $\mathcal{F}^{*}$ as a dense subspace, such that the following commutativity conditions are satisfied: $i_2 \circ F = f(F) \circ i_1$, for each $F \in \mathcal{F}$, and $k_2 \circ T = T^* \circ k_1$.
\end{definition}


\begin{theorem}
Every collectionwise surjective space $\mathcal{F} \subseteq \mathcal{F}_{T}^{\mathrm{all}}$ of GENEOs $(F, T) : (\Phi, G) \to (\Psi, H)$ with $\mathrm{dom}(\Phi) = X$ and $\mathrm{dom}(\Psi) = Y$ admits a compactification $\mathcal{F}^*$, provided the data sets $\Phi$ and $\Psi$ are totally bounded and endow $X$ and $Y$ with metric structures.
\end{theorem}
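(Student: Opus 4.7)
The plan is to combine all the machinery developed in the paper and simply certify that the candidate compactification
\[
\mathcal{F}^* := \mathrm{cl}(\mathcal{F}_3) \subseteq \mathcal{F}_{\overline{\hat{T}}}^{\mathrm{all}},
\]
together with the perception pairs $(\Phi^*,G^*) := (\overline{\hat{\Phi}}, \overline{\hat{G}})$, $\mathrm{dom}(\Phi^*)=\hat X$ and $(\Psi^*,H^*) := (\overline{\hat{\Psi}}, \overline{\hat{H}})$, $\mathrm{dom}(\Psi^*)=\hat Y$, and the homomorphism $T^* := \overline{\hat{T}} : \overline{\hat{G}} \to \overline{\hat{H}}$, satisfies all the clauses of the definition of compactification of a space of GENEOs.

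First, I would invoke Theorem \ref{CompPercPa} twice, once for $(\Phi, G)$ and once for $(\Psi, H)$, to obtain that $(\Phi^*, G^*)$ and $(\Psi^*, H^*)$ are compactifications of $(\Phi, G)$ and $(\Psi, H)$ respectively. This secures the isometric embeddings $j_1 : X \to \hat X$, $j_2 : Y \to \hat Y$, $i_1 : \Phi \to \overline{\hat{\Phi}}$, $i_2 : \Psi \to \overline{\hat{\Psi}}$, $k_1 : G \to \overline{\hat{G}}$, $k_2 : H \to \overline{\hat{H}}$ with dense images, and verifies the commutativity conditions $i_1(\varphi) \circ j_1 = \varphi$, $i_2(\psi) \circ j_2 = \psi$, $k_1(g) \circ j_1 = j_1 \circ g$, $k_2(h) \circ j_2 = j_2 \circ h$.

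Next, the compactness of $\mathcal{F}^*$ is immediate from Proposition \ref{TdHtFComp}, since $\mathcal{F}^* = \mathrm{cl}(\mathcal{F}_3)$ was defined precisely as this closure. To supply the isometric embedding with dense image, take $f : \mathcal{F} \to \mathcal{F}^*$ to be the composite $f_2 \circ f_1$ of Proposition \ref{ComGN4}. Proposition \ref{ComGN4} already gives that $f$ is an isometry onto $\mathcal{F}_3$, and $\mathcal{F}_3$ is tautologically dense in $\mathcal{F}^* = \mathrm{cl}(\mathcal{F}_3)$; so $f(\mathcal{F})$ is dense in $\mathcal{F}^*$.

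Finally, I need to verify the two remaining commutativity conditions. The equation $i_2 \circ F = f(F) \circ i_1$ for each $F \in \mathcal{F}$ is exactly Proposition \ref{ComGNF}, and the equation $k_2 \circ T = T^* \circ k_1$ with $T^* = \overline{\hat{T}}$ is Proposition \ref{ComGNT}; this is where the collectionwise surjectivity hypothesis is essential, since it is what allowed us to define $\overline{\hat{T}}$ via Theorem \ref{TnExp} and Corollary \ref{CpTNExp}. Putting these verifications together yields the theorem. There is essentially no obstacle left to overcome: the whole argument is a bookkeeping exercise, since the substantive analytic work (total boundedness of $\overline{\hat{\Phi}}$, compactness of $\overline{\hat{G}}$, non-expansivity of $\hat{T}$, continuity of the group action for the equivariance extension, and compactness of $\mathcal{F}_{\overline{\hat{T}}}^{\mathrm{all}}$) has already been carried out in Sections 4.1--4.4.
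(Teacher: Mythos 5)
Your proposal is correct and is essentially identical to the paper's proof, which likewise sets $F^* := \overline{\hat{F}}$, $T^* := \overline{\hat{T}}$, $\mathcal{F}^* := \mathrm{cl}(\mathcal{F}_3)$ and cites Theorem \ref{CompPercPa} together with Propositions \ref{ComGNT}, \ref{TdHtFComp}, \ref{ComGN4}, and \ref{ComGNF}. You have merely spelled out the bookkeeping that the paper leaves implicit, which is a reasonable expansion rather than a different route.
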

\begin{proof}
It follows from Theorem \ref{CompPercPa} and Propositions \ref{ComGNT}, \ref{TdHtFComp}, \ref{ComGN4}, and \ref{ComGNF}, by setting
$F^* := \overline{\hat{F}}$, $T^* := \overline{\hat{T}}$, and $\mathcal{F}^* := \mathrm{cl}(\mathcal{F}_3) \subseteq \mathcal{F}_{\overline{\hat{T}}}^{\mathrm{all}}$.
\end{proof}


\section{Discussion}\label{Dis}
In this paper, we have shown that when the spaces of measurements are totally bounded and large enough to ensure that any two points can be distinguished by our measurements, we can always assume that we are considering compact perception pairs and compact spaces of GENEOs, provided that the set of our operators is collectionwise surjective. This result makes available a sound basis for further research concerning spaces of GENEOs, and paves the way for possible applications of the theory.

Of course, the computation costs might be higher while working with compactifications, but that should not be considered to be a drawback. In fact, in practical applications, one does not necessarily need to work with compactifications in an explicitly concrete manner. The mere recognition that certain spaces of GENEOs can be nicely embedded in compact ones is all that one needs most of the time.

Our research has raised several questions as well. For example, it is not clear whether the assumption of collectionwise surjectivity could be removed or made milder. Furthermore, we could wonder if our approach could be extended to the case when $X$ and $Y$ are endowed with a pseudo-metric instead of a metric structure, thereby extending the range of applicability of our constructions. We are planning to follow these lines of research in the near future.



\backmatter

\bmhead{Acknowledgments}

The author is greatly indebted to Patrizio Frosini for supervising the project. He thanks Massimo Ferri for his precious advice and support. Finally, he thanks Nicola Quercioli for many helpful suggestions.


\section*{Declarations}

This research has been partially supported by INdAM-GNSAGA.  


\begin{appendices}

\section{Supplementary Proofs}\label{App}

For the sake of completeness, we recall here the proofs of some results reported in Section \ref{BR} that have been given only in \cite{NQTh}.

\begin{proof}[Proof of Proposition \ref{SigNonExp}]
Let $\varphi \in \Phi$ and $x_1, x_2 \in X$. Then \[\vert \varphi(x_1)-\varphi(x_2) \vert \le \sup_{\varphi' \in \Phi} \vert \varphi'(x_1)-\varphi'(x_2) \vert = D_X(x_1,x_2). \] 
So $\varphi : X \to \mathbb{R}$ is non-expansive.
\end{proof}

\begin{proof}[Proof of Theorem \ref{PhiBddXBdd}]
	It suffices to show that every sequence $(x_i)_{i\in \mathbb{N}}$ in $X$ admits a Cauchy subsequence \cite{Gaal}.
	Let us consider an arbitrary sequence $(x_i)_{i \in \mathbb{N}}$ in $X$ and an arbitrarily small $\varepsilon > 0$. Since $\Phi$ is totally bounded, we can find a finite subset $\Phi_{\varepsilon} = \{ \varphi_1, \dots , \varphi_n \}$ such that $\Phi = \bigcup_{i=1}^n B_{\Phi}(\varphi_i, \varepsilon)$, where
	$B_{\Phi}(\varphi,\varepsilon)=\{\varphi' \in \Phi : D_\Phi(\varphi',\varphi) < \varepsilon \}$.
	In particular, we can say that for any $\varphi \in \Phi$ there exists $\varphi_{\bar k} \in \Phi_\varepsilon$ such that $\|\varphi - \varphi_{\bar k}\|_{\infty} < \varepsilon$. Now, we consider the real sequence $(\varphi_1(x_i))_{i\in \mathbb{N}}$ that is bounded because all the functions in $\Phi$ are bounded. From Bolzano-Weierstrass Theorem
	it follows that we can extract a convergent subsequence $(\varphi_1(x_{i_h}))_{h \in \mathbb{N}}$. Then we consider the sequence $(\varphi_2(x_{i_h}))_{h \in \mathbb{N}}$. Since $\varphi_2$ is bounded, we can extract a convergent subsequence $(\varphi_2(x_{i_{h_t}}))_{t \in \mathbb{N}}$. We can repeat the same argument for any $\varphi_k \in \Phi_{\varepsilon}$. Thus, we obtain a subsequence $(x_{p_{j}})_{j \in \mathbb{N}}$ of $(x_i)_{i \in \mathbb{N}}$, such that $(\varphi_k(x_{p_j}))_{j \in \mathbb{N}}$ is a real convergent sequence for any $k \in \{1, \dots, n \}$, and hence a Cauchy sequence in $\mathbb{R}$. Moreover, since $\Phi_{\varepsilon}$ is a finite set, there exists an index $\bar{\jmath}$ such that for any $k \in \{1, \dots, n \}$ we have that

	\[ \left \vert \varphi_k(x_{p_r}) - \varphi_k(x_{p_s}) \right \vert < \varepsilon, \ \ \mathrm{for \ all} \ r,s \geq \bar{\jmath}. \]
	
	We observe that $\bar{\jmath}$ does not depend on $k$, but only on $\varepsilon$ and $\Phi_{\varepsilon}$.
	
	In order to prove that $(x_{p_{j}})_{j \in \mathbb{N}}$ is a Cauchy sequence in $X$, we observe that for any $r,s \in \mathbb{N}$ and any $\varphi \in \Phi$, by choosing a $k$ such that $\|\varphi-\varphi_k\|_\infty < \varepsilon$ we have:
	\begin{align*}
	\left \vert \varphi(x_{p_r}) - \varphi(x_{p_s}) \right \vert &= \left \vert\varphi(x_{p_r}) - \varphi_k(x_{p_r}) + \varphi_k(x_{p_r}) - \varphi_k(x_{p_s}) + \varphi_k(x_{p_s}) - \varphi(x_{p_s}) \right \vert \nonumber\\
	& \leq  \left \vert \varphi(x_{p_r}) - \varphi_k(x_{p_r}) \right \vert + \left \vert \varphi_k(x_{p_r}) - \varphi_k(x_{p_s}) \right \vert + \left \vert \varphi_k(x_{p_s}) - \varphi(x_{p_s}) \right \vert \nonumber\\
	& \leq  \|\varphi - \varphi_k\|_{\infty} + \left \vert \varphi_k(x_{p_r}) - \varphi_k(x_{p_s}) \right \vert + \|\varphi_k - \varphi\|_{\infty}. 
	\end{align*}
	
	It follows that $\left \vert \varphi(x_{p_r}) - \varphi(x_{p_s})\right \vert < 3\varepsilon$ for every $\varphi \in \Phi$ and every $r,s \geq \bar{\jmath}$.
	Thus, $\sup_{\varphi \in \Phi} \left \vert \varphi(x_{p_r}) - \varphi(x_{p_s}) \right \vert = D_X(x_{p_r},x_{p_s}) \le 3\varepsilon$. Hence, the subsequence $(x_{p_j})_{j \in \mathbb{N}}$ is a Cauchy sequence in $X$, and the theorem is proved.
\end{proof}

\begin{proof}[Proof of Theorem \ref{PhiBddGBdd}]
	Let $(g_i)_{i\in \mathbb{N}}$ be a sequence in $G$ and take a real number $\varepsilon > 0$. Given that $\Phi$ is totally bounded, we can find a finite subset $\Phi_{\varepsilon} = \{ \varphi_1, \dots , \varphi_n \}$ such that for every $\varphi\in\Phi$ there exists $\varphi_h \in \Phi_\varepsilon$ for which $D_\Phi(\varphi_h,\varphi) < \varepsilon$. 
	
	Let us consider the sequence $(\varphi_1g_i)_{i\in \mathbb{N}}$ in $\Phi$. Since $\Phi$ is totally bounded, we can extract a Cauchy subsequence $(\varphi_1g_{i_h})_{h \in \mathbb{N}}$ \cite{Gaal}. Then we consider the sequence $(\varphi_2g_{i_h})_{h \in \mathbb{N}}$. Again, we can extract a Cauchy subsequence $(\varphi_2g_{i_{h_t}})_{t \in \mathbb{N}}$. We can repeat the same argument for any $\varphi_k \in \Phi_{\varepsilon}$. Thus, we are able to extract a subsequence $(g_{i_j})_{j\in \mathbb{N}}$ of $(g_i)_{i\in \mathbb{N}}$ such that $(\varphi_k  g_{i_j})_{j\in \mathbb{N}}$ is a Cauchy sequence for any $k \in \{1, \dots , n\}$. For the finiteness of set $\Phi_\varepsilon$, we can find an index $\bar{\jmath}$ such that for any $k \in \{1, \dots , n\}$
	
	\[	D_\Phi(\varphi_k  g_{i_r},\varphi_k g_{i_s}) < \varepsilon, \ \mathrm{for \ every} \ s,r \geq \bar{\jmath}.\]

	In order to prove that $(g_{i_{j}})_{j\in \mathbb{N}}$ is a Cauchy sequence, we observe that for any $\varphi\in\Phi$, any $\varphi_k\in\Phi_\varepsilon$, and any $r,s \in \mathbb{N}$ we have
	\begin{align*}
		D_\Phi(\varphi g_{i_r},\varphi  g_{i_s}) 
		& \leq D_\Phi(\varphi  g_{i_r},\varphi_k g_{i_r}) + D_\Phi(\varphi_k  g_{i_r},\varphi_k  g_{i_s}) + D_\Phi(\varphi_k g_{i_s},\varphi g_{i_s})\nonumber \\
		& =  D_\Phi(\varphi,\varphi_k) + D_\Phi(\varphi_k g_{i_r},\varphi_k  g_{i_s}) + D_\Phi(\varphi_k, \varphi).
	\end{align*}
	
	We observe that $\bar{\jmath}$ does not depend on $\varphi$, but only on $\varepsilon$ and $\Phi_{\varepsilon}$. By choosing a $\varphi_k\in\Phi_\varepsilon$ such that $D_\Phi(\varphi_k,\varphi) < \varepsilon$, we get $D_\Phi(\varphi g_{i_r},\varphi  g_{i_s}) < 3 \varepsilon$ for every $\varphi \in \Phi$ and every $r,s \geq \bar{\jmath}$.
	Thus, $D_{\mathrm{Aut}}(g_{i_r},g_{i_s}) \le 3\varepsilon$. Hence, the sequence $(g_{i_j})_{j\in \mathbb{N}}$ is a Cauchy sequence. Therefore, $G$ is totally bounded.
\end{proof}

\begin{proof}[Proof of Proposition \ref{IsoXComp}]
Let $C(X,X)$ denote the metric space of all continuous self-maps of $X$ with respect to the metric $d_{\infty}$ given by \[d_\infty(f,g):= \sup_{x \in X}D_X\left(f(x),g(x)\right), \ \mathrm{for\ every\ } f, g \in C(X,X). \] It suffices to show that $\mathrm{Iso}(X)$ is closed in $C(X,X)$, as it is relatively compact by Arzelà-Ascoli theorem \cite{ArAsc}. 
Let $(f_i)_{i \in \mathbb{N}}$ be a sequence in $\mathrm{Iso}(X)$ that converges to some $f \in C(X,X)$; we show that $f \in \mathrm{Iso}(X)$. Note that $f(x)= \lim_{i \to \infty} f_i(x)$ with respect to $D_X$, for each $x \in X$; indeed, \[0\le\lim_{i \to \infty}D_X( f(x),f_i(x)) \le \lim_{i \to \infty} d_\infty( f,f_i)=0.\] So, 
\begin{align*}
	D_X(f(x),f(y)) & = D_X(\lim_{i \to \infty}f_i(x),\lim_{i \to \infty}f_i(y)) \\ 
	& = \lim_{i \to \infty}D_X(f_i(x),f_i(y)) \\
	& = \lim_{i \to \infty}D_X(x,y) \\
	& =D_X(x,y),
\end{align*}
whence $f$ preserves $D_X$.

It immediately follows that $f$ is injective. As for surjectivity, let $x_{0}$ be an arbitrary point of $X$; we show that $x_0 \in f(X)$. Consider the sequence $(x_n)_{n \in \mathbb{N}}$ defined by setting $x_{n+1} := f(x_n)$. Since $X$ is compact, $(x_n)_{n \in \mathbb{N}}$ admits a converging subsequence $(x_{n_i})_{i \in \mathbb{N}}$. Let $\varepsilon > 0$ be an arbitrary real number. Then there is an $n_0 \in \mathbb{N}$ such that $D_X(x_{n_i}, x_{n_j}) < \varepsilon$ for every $i, j \geq n_0$. If $n_j \geq n_i$, then $D_X(x_{n_i}, x_{n_j}) = D_X(x_0, x_{n_j-n_i})$,  as $f$ preserves $D_X$. Hence, $D_X(x_0,f(X)):=\inf_{x \in f(X)}D_X(x_0,x)\le D_X(x_0,x_{n_j-n_i})= D_X(x_{n_i},x_{n_j}) < \varepsilon.$ From the arbitrariness of $\varepsilon$, it follows that $D_X(x_0,f(X))=0$. As $f$ preserves $D_X$, it is continuous, and $f(X)$ then is compact. In particular, $f(X)$ is closed, and hence $x_0 \in f(X)$.
\end{proof}

\begin{proof}[Proof of Theorem \ref{AutComp}]
For the sake of conciseness, we will rephrase the proof given in \cite{NQTh}. Consider the collection $\mathcal{H}$ of all non-empty compact subsets of the space $\mathbf{NE}(X, \mathbb{R})$ of all real-valued non-expansive functions on $(X,D_X)$, endowed with the distance induced by the uniform norm.  Of course, by Proposition \ref{SigNonExp}, $\mathbf{NE}(X, \mathbb{R})\supseteq \Phi$.
We know that $(\mathcal{H}, d_\mathcal{H})$ is a metric space, where $d_\mathcal{H}$ is the usual Hausdorff distance \cite{Eng}.
If $g \in \mathrm{Iso}(X)$, then the map $R_g:\Phi\to\mathbb{R}_b^X$ that takes $\varphi$ to $\varphi g$ is continuous (it  indeed preserves the max-morm distance), and hence $\Phi g:=\{\varphi g, \varphi \in \Phi\} \in \mathcal{H}$. 

We now observe that if
$g,h \in \mathrm{Iso}(X)$, then
    \begin{align*}
		d_\mathcal{H}(\Phi g, \Phi h)& := \max \left\{\sup_{\varphi \in \Phi g}\inf_{\psi \in \Phi h}\| \varphi - \psi\|_\infty , \sup_{\psi \in \Phi h}\inf_{\varphi \in \Phi g}\| \varphi - \psi\|_\infty\right\}\\
		& = \max \left\{\sup_{\varphi \in \Phi }\inf_{\psi \in \Phi }\| \varphi g - \psi h\|_\infty , \sup_{\psi \in \Phi }\inf_{\varphi \in \Phi }\| \varphi g - \psi h \|_\infty\right\}\\
		& \le \max \left\{\sup_{\varphi \in \Phi }\| \varphi g - \varphi h\|_\infty , \sup_{\psi \in \Phi }\| \psi g - \psi h \|_\infty\right\}\\
		& = \sup_{\varphi \in \Phi }\| \varphi g - \varphi h\|_\infty\\
		& = \sup_{\varphi \in \Phi }\sup_{x \in X} \vert \varphi g(x) - \varphi h(x) \vert\\
		& = \sup_{x \in X} \sup_{\varphi \in \Phi }\vert \varphi g(x) - \varphi h(x) \vert\\
		& = \sup_{x \in X} D_X(g(x),h(x)) \\
		& = d_\infty(g,h).
    \end{align*}

Therefore, the map $\chi:\mathrm{Iso}(X)\to \mathcal{H}$ that takes $g$ to $\Phi g$ is non-expansive and hence continuous.
Since $\mathrm{Aut}_{\Phi}(X)=\chi^{-1}(\Phi)$, such a group is the preimage of a closed set under a continuous function. It follows that it is closed in $\mathrm{Iso}(X)$, and hence compact. 
\end{proof}





\end{appendices}






\bigskip


\end{document}